\numberwithin{equation}{section}
\newtheorem{thm}{Theorem}[section]
\newtheorem*{thm*}{Theorem}
\newtheorem{prop}[thm]{Proposition}
\newtheorem{lemma}[thm]{Lemma}
\newtheorem{cor}[thm]{Corollary}
\theoremstyle{remark}
\newtheorem{remark}[thm]{Remark}
\newcommand{\C}{\mathbb{C}}
\newcommand{\F}{\mathbb{F}}
\newcommand{\R}{\mathbb{R}}
\newcommand{\T}{\mathbb{T}}
\newcommand{\Z}{\mathbb{Z}}
\newcommand{\Fscr}{\mathscr{F}}
\newcommand{\Lscr}{\mathscr{L}}
\newcommand{\Leg}[2]{\left( \frac{#1}{#2}\right)}
\newcommand{\GL}{\mathrm{GL}}
\newcommand{\ep}{\varepsilon}
\newcommand{\con}{\equiv}
\newcommand{\ndiv}{\nmid}
\newcommand{\modd}[1]{\; ( \text{mod} \; #1)}
\newcommand{\bstack}[2]{#1 \atop #2}
\newcommand{\maps}{\rightarrow}
\newcommand{\Union}{\bigcup}
\newcommand{\union}{\cup}
\newcommand{\al}{\alpha}
\newcommand{\be}{\beta}
\newcommand{\ga}{\gamma}
\newcommand{\del}{\delta}
\newcommand{\Del}{\Delta}
\newcommand{\om}{\omega}
\newcommand{\Om}{\Omega}
\newcommand{\sig}{\sigma}
\newcommand{\Dcal}{\mathcal{D}}
\newcommand{\Fcal}{\mathcal{F}}
\newcommand{\Ical}{\mathcal{I}}
\newcommand{\Mcal}{\mathcal{M}}
\newcommand{\Scal}{\mathcal{S}}
\newcommand{\Ucal}{\mathcal{U}}
\newcommand{\Qbf}{\mathbf{Q}}
\newcommand{\Sbf}{\mathbf{S}}
\newcommand{\onebf}{\boldsymbol1}
\newcommand{\abf}{{\bf a}}
\newcommand{\beq}{\begin{equation}}
\newcommand{\eeq}{\end{equation}}
\def\@tocline#1#2#3#4#5#6#7{\relax
  \ifnum #1>\c@tocdepth 
  \else
    \par \addpenalty\@secpenalty\addvspace{#2}%
    \begingroup \hyphenpenalty\@M
    \@ifempty{#4}{%
      \@tempdima\csname r@tocindent\number#1\endcsname\relax
    }{%
      \@tempdima#4\relax
    }%
    \parindent\z@ \leftskip#3\relax \advance\leftskip\@tempdima\relax
    \rightskip\@pnumwidth plus4em \parfillskip-\@pnumwidth
    #5\leavevmode\hskip-\@tempdima
      \ifcase #1
       \or\or \hskip 1em \or \hskip 2em \else \hskip 3em \fi%
      #6\nobreak\relax
    \hfill\hbox to\@pnumwidth{\@tocpagenum{#7}}\par
    \nobreak
    \endgroup
  \fi}
\newcommand{\Rcal}{\mathcal{R}}
\definecolor{pink}{rgb}{1,.2,.6}
\definecolor{orange}{rgb}{0.7,0.3,0}
\definecolor{blue}{rgb}{.2,.6,.75}
\definecolor{green}{rgb}{.4,.7,.4}
\definecolor{purple}{RGB}{127,0,255}
\newcommand{\xtra}[1]{}
\newcommand{\exendnote}[1]{}
\newcommand{\apphide}[1]{}
\DeclareMathOperator{\Tr}{tr}
\DeclareMathOperator{\SU}{SU}
\renewcommand{\rho}{\varrho}
\begin{document}

\title[On superorthogonality]{On superorthogonality}

\author[Pierce]{Lillian B. Pierce\\$\quad$\\ \MakeLowercase{with an appendix by \uppercase{E}mmanuel \uppercase{K}owalski}}
\address{Department of Mathematics, Duke University, 120 Science Drive, Durham NC 27708 USA }
\email{pierce@math.duke.edu}

	\begin{abstract}
  In this survey, we explore how superorthogonality amongst functions in a sequence $f_1,f_2,f_3,\ldots$ results in direct or converse inequalities for an associated square function.  We distinguish between three main types of superorthogonality, which we demonstrate arise in a wide array of settings in harmonic analysis and number theory. This perspective gives clean proofs of central results, and unifies topics including Khintchine's inequality, Walsh-Paley series, discrete operators, decoupling, counting solutions to systems of Diophantine equations, multicorrelation of trace functions, and the Burgess bound for short character sums. 
	\end{abstract}

\maketitle

\begin{center}
\emph{Dedicated to the memory of Elias M. Stein.}
\end{center}

 \section{Introduction}\label{sec_superorthog}

Let $\{f_n\}_n$ be a sequence of functions associated to a function $f$. Our goal is to understand two types of inequalities:
\\ 
{\bf The direct inequality:}
\[
\| \sum_n f_n \|_{L^p} \leq c_p \| ( \sum_n |f_n|^2)^{1/2} \|_{L^p}.
\]
{\bf The converse inequality:}
\[
 \| ( \sum_n |f_n|^2)^{1/2} \|_{L^p} \leq {c_p}' \| f\|_{L^p}.
\]
Given an operator with a suitable   decomposition
\[ T  = \sum_n T_n,\]
upon setting $f_n = T_n(f)$, 
if both estimates were true, they would imply that
\[ \| Tf \|_{L^p} \leq  {c_p}' c_p\| f\|_{L^p} .
\]

Superorthogonality can be used to prove one or both of these inequalities.  
Superorthogonality  is the property that 
 for any  tuple of functions $f_{n_1}, \ldots, f_{n_{2r}}$ from the given sequence $\{f_n\}_n$,
 \beq\label{super_gen_super_id}
 \int f_{n_1}\bar{f}_{n_2} \cdots f_{n_{2r-1}} \bar{f}_{n_{2r}}   =0 
 \eeq
as long as an appropriate condition is satisfied by the tuple of indices $(n_1, \ldots,  n_{2r})$. 
In this note,    we show that a wide variety of topics in harmonic analysis and number theory can be united within the framework of superorthogonality, and associated direct and converse inequalities.
We exhibit three main types of superorthogonality.\\

\subsubsection*{Type I}
Type I superorthogonality is the case in which (\ref{super_gen_super_id}) holds if the tuple $(n_1, \ldots, n_{2r})$ has the property that some value $n_j$ appears an odd number of times.
We show that any collection of functions with Type I superorthogonality satisfies a direct inequality. 

 Type I superorthogonality classically appeared in Khintchine's inequality for the Rademacher functions, which can be viewed as both a direct and a converse inequality. 
 Furthermore we show that a refinement of Type I superorthogonality   underpins a  recent result of \cite{GGPRY19x}, a philosophical converse to the   proof of the Vinogradov Mean Value Theorem via decoupling  \cite{BDG16}. This notion of superorthogonality shows that   counts for the number of diagonal solutions and near-solutions to a system of Diophantine equations  can imply a direct inequality for a square function; this in turn implies a decoupling inequality  for the extension operator associated to the corresponding curve.\\

\subsubsection*{Type II} Type II superorthogonality is the case in which  (\ref{super_gen_super_id}) holds if the tuple $(n_1, \ldots, n_{2r})$ has the property that some value $n_j$ appears precisely once.  We show that any collection of functions with Type II superorthogonality satisfies both a direct inequality and a  multilinear direct inequality.

Any sequence $\{ f_n\}_n$ in which $f_1,f_2,\ldots,f_n,\ldots$  are mutually independent  random variables, and each has mean zero (in the sense that $\int f_n dx =0$), satisfies the Type II condition. 
Supposing for simplicity the functions are real-valued, the mutual independence guarantees that 
\[ \int f_{n_1} f_{n_2} \cdots f_{n_{2r}} dx  = \prod_\ell ( \int f_{n_\ell}^{m_\ell} dx),\]
where $m_\ell$ is the multiplicity with which $f_{n_\ell}$ occurs in the product $f_{n_1}f_{n_2}\cdots f_{n_{2r}}$. Hence the defining property of Type II superorthogonality holds, since this integral vanishes as soon as at least one function has multiplicity one.

  We  show  that Type II  superorthogonality   also holds in a completely different setting, namely for a sequence of discrete functions $\{ f_{a/q}\}_{a/q}$ acting on $\Z$, indexed by a  collection of rational numbers. Each function is defined according to 
\[ (f_{a/q})\widehat{\;}(\xi)  = m(\ep^{-1}(\xi - a/q)) \widehat{f}(\xi),
\]
where  $m$ is a  periodization of an $L^p(\R)$ multiplier supported in $(-1/2,1/2]$, and   $\ep$ is appropriately small. In this case, verifying Type II superorthogonality requires quite different methods---arithmetic rather than probabilistic, relating to the prime factorizations of the denominators in the rationals $a/q$. 
Using Type II superorthogonality, we  prove  a direct inequality and a multilinear direct inequality related to the collection $\{f_{a/q}\}_{a/q}$.
Furthermore, we prove two types of   converse inequalities in this setting. Taken altogether, these inequalities prove  the $\ell^p$ boundedness  of a discrete operator that is a building block in the celebrated work of Ionescu and Wainger \cite{IW} on discrete singular Radon transforms; see Theorem \ref{thm_discrete_main_goal}.   Our  presentation here serves as a friendly introduction to  the influential method of Ionescu and Wainger.\\

\subsubsection*{Type III} Type III superorthogonality is the case in which  (\ref{super_gen_super_id}) holds if the tuple $(n_1, \ldots, n_{2r})$ has the property that some value $n_j$ appears precisely once and is strictly greater than all other values in the tuple.  

This type of superorthogonality  occurred a few years after Khintchine's inequality, in Paley's work on the Walsh-Paley series \cite{Pal32}, where he was able to use Type III superorthogonality  to prove both a direct inequality and  a converse inequality. Here we develop Paley's  ideas in general terms, to show that any collection of functions with Type III superorthogonality, and two additional properties, satisfies both a direct and a converse inequality.\\

\subsubsection*{Quasi-superorthogonality} Fourth, we introduce the notion of quasi-superorthogonality: we no longer assume that  (\ref{super_gen_super_id}) vanishes, but instead that it exhibits  quantitative cancellation. Now instead of a direct inequality, we obtain a variant that also includes an  ``off-diagonal'' term on the right-hand side. 
Such inequalities are nevertheless very useful. 

In fact, we observe that a deep application of $\ell$-adic cohomology  and the Riemann Hypothesis over finite fields proves that Type I quasi-superorthogonality holds for sequences of   ``trace functions''; this is the statement of  multicorrelation of trace functions proved in \cite{FKM15}.  Hence an approximate direct inequality holds for such functions. Moreover, the source of quasi-superorthogonality of trace functions is a consequence of ``exact'' superorthogonality in the sense of (\ref{super_gen_super_id}) for a different set of functions, combined with the Riemann Hypothesis over finite fields. An appendix by Emmanuel Kowalski makes this phenomenon explicit.  

As an application, we  give a complete proof of  the Burgess bound for character sums \cite{Bur57} from the perspective of  quasi-superorthogonality and an approximate direct inequality for square functions; see Theorem \ref{thm_Burgess}. This is a celebrated result in number theory that has long held the  record for certain problems related to the Generalized Riemann Hypothesis. 
As remarked in \cite{GalMon10}, ``While the original argument [of Burgess] is easily followed line-by-line, it seems hard to comprehend the larger
sense of it, because several technical difficulties are being dealt with at the same time that the main
idea is unfolding.''  Here we give an   intuitive motivation for the method by combining quasi-superorthogonality with simplifying ideas from \cite{GalMon10, HB12}. This also  highlights certain barriers to improving Burgess's result. 

\subsection*{In Memoriam}
It was an honor and delight to learn from   Elias M. Stein  for twenty years. This paper is in many ways a joint product with Eli. It germinated from a brief note Eli wrote to me in the summer of 2018, while we were collaborating on a book manuscript.  At the time, we were interested in the relationship of superorthogonality  to square function estimates.  We noticed  variants of the basic notion   in several settings, and began to divide  superorthogonality into types.
 While the ideas of that hand-written note have now grown and changed, the heart of the matter was already on those foolscap pages. In homage, I follow Eli's words closely in  phrases in the introduction and  in \S \ref{sec_Paley} (particularly \S \ref{sec_TIII'}). The material of \S \ref{sec_TI_discrete} develops a special case of a key theorem in  the book manuscript we were preparing, and represents our shared work. The later sections   move on to connections with number theory, which we also enjoyed discussing that summer.   I have taken the liberty of developing ideas from our conversations, notes, and drafts, in loving debt to Eli; I am of course solely responsible for any  inaccuracies in the current presentation.

\subsection*{Outline}

In  \S \ref{sec_TypeI} we introduce Type I superorthogonality and formally prove a direct inequality; from this we deduce Khintchine's inequality for Rademacher functions and a variant of the Marcinkiewicz-Zygmund theorem, which we apply later.
In  \S \ref{sec_TypeII} we introduce Type II superorthogonality and formally prove a direct inequality. 
In  \S \ref{sec_Paley} we introduce Type III superorthogonality and use it to prove both a direct and  a converse inequality; these apply for example to  Walsh-Paley series. We then mention a variant Type III' that applies to Fourier multiplier operators.  

Having introduced the three main types, in  \S \ref{sec_TI_discrete} we then turn to the core technical work of applying Type II to prove a  theorem about discrete operators.
In \S \ref{sec_decoupling} we refine Type I to Type I* and  exhibit its relationship to decoupling and counting solutions to Diophantine equations.

We then turn to the notion of quasi-superorthogonality and its applications in number theory. In \S \ref{sec_trace} we document why trace functions satisfy Type I quasi-superorthogonality, and deduce an approximate direct inequality. We then  introduce the notion of incomplete sums of trace functions and the P\'olya-Vinogradov method, leading to the difficult question of bounding short sums. In \S \ref{sec_Burgess_method} we develop a schematic approach to bounding short sums via quasi-superorthogonality. We then carry this out precisely, first obtaining a weaker bound with a more intuitive proof, and then refining it to recover the classical Burgess bound.   

Appendix A concerns further details related to the setting of Walsh-Paley series.

Appendix B by Emmanuel Kowalski provides an explicit description of how an instance of exact superorthogonality leads to quasi-superorthogonality for trace functions.
 
 As this note covers territory within both analysis and number theory, it is written to be broadly accessible. In addition to the main ``types'' of superorthogonality we focus on here, we periodically make further remarks about other settings and other types and their variants, but given the universality of the phenomena, we do not intend this survey to be exhaustive. We anticipate that many further instances of superorthogonality will be recognized by readers.

\subsection*{Conventions}

Strictly speaking, when one specifies that a collection of functions $\{f_n\}_n$ satisfies a superorthogonality condition (\ref{super_gen_super_id}), one should specify for which $r$ this holds, the set of indices $n$, and   the measure space in which integration takes place. In the settings we consider,  the superorthogonality property   holds for all integers $r \geq 1$.  
In formal arguments to deduce a direct or converse inequality using superorthogonality, we assume the sum $\sum_n f_n$ is taken over a finite set of indices, and then the desired inequality is proved with a constant that is uniform with respect to the cardinality of this set. In applications in which the set of indices is infinite, this  suffices if appropriate limiting arguments apply.
In formal arguments we suppress notation for the measure space $L^p(\Mcal,d\mu)$ until we state a specific setting, at which point we then work precisely with spaces such as $L^p(\R)$ and $L^p[0,1]$ with Lebesgue measure, or $\ell^p(\Z)$ and $\ell^p(\Z/q\Z)$ with counting measure. In the settings we consider, the functions $f_n$ in the collection $\{f_n\}_n$ are assumed to be distinct.

 Observe that Type I $\Rightarrow$ Type II $\Rightarrow$ Type III, in the sense that any sequence of functions $\{f_n\}_n$ that is of Type I must be of Type II, and so forth. 
  While the condition that defines Type I and Type II superorthogonality is invariant under a change of ordering of the functions $f_1,f_2,\ldots,f_n,\ldots$, the condition that defines Type III is not.
In what follows, we assume that the set $\{f_n\}= \{ f_1, f_2, \ldots, f_n, \ldots \}$ has been given with an ordering.

Constants such as $C_p, c_p, A_p$ and so on, may  indicate certain dependencies, but may change in value from one occurrence to the next. The notation $f \ll_p g$ is also used, and indicates that there is an implicit constant $C_p$ such that $|f| \leq C_p g$.

\section{Type I superorthogonality and the Rademacher functions}\label{sec_TypeI}

We  introduce  a first notion of superorthogonality,   working with real-valued functions for simplicity. It is the condition   that for every $2r$-tuple $f_{n_1}, \ldots, f_{n_{2r}}$ of functions from a sequence $\{f_n\}_n$,
\beq\label{TIII_vanish}
\int  f_{n_1} f_{n_2} \cdots  f_{n_{2r}}   =0
\eeq
as long as \\
{\bf Type I: } the tuple $(n_1,n_2, \ldots, n_{2r})$ has the property that there is a value $n_j$ that appears an odd number of times.
 
Here we show formally that any sequence of functions satisfying the Type I condition obeys a direct inequality; then we observe that this holds for Rademacher functions, and derive Khintchine's inequality and a variant of the Marcinkiewicz-Zygmund theorem. Later we will return to applications of the Type I property in the settings of decoupling and trace functions.

It is an elementary observation that a collection $\{f_n\}$ with Type I superorthogonality satisfies an identity in $L^2$: 
\beq\label{L2_diag}
 \| \sum_n f_n \|_{L^2}^2 = \| (\sum_n f_n^2)^{1/2} \|_{L^{2}}^2.
 \eeq
This follows from expanding the left-hand side and observing that the off-diagonal cross terms vanish, by the superorthogonality assumption.

More generally, if a set of functions $\{f_n\}$ satisfies the Type I condition, we may immediately verify the direct inequality in $L^{2r}$ for each integer $r \geq 1$. 
We expand the $L^{2r}$ norm using a multinomial expansion,
\[\| \sum_n f_n \|_{L^{2r}}^{2r}= \int | \sum_{n  } f_n|^{2r}  = \sum_{(a_1,\ldots, a_s)} C(a_1,\ldots, a_s) \int f_{n_1}^{a_1} \cdots f_{n_s}^{a_s} ,\]
where the sum ranges over all $s \leq 2r$, all pairwise distinct $n_1,\ldots, n_s$ in the (finite) index set, and all $(a_1,\ldots, a_s)$  with   $a_1 + \cdots + a_s = 2r$; here  $C(a_1,\ldots,a_s)=(a_1+\cdots + a_s)! / (a_1!   \cdots a_s!)$.
  By the Type I property, the integral vanishes except for those   $(a_1,  \ldots, a_s)$ with each $a_i$ even, say $a_i = 2b_i$.
  Moreover, non-vanishing terms on the right-hand side must  have $s \leq r$.  
  
On the other hand, observe that    
    \beq\label{B_sum}
    \sum_{(b_1,\ldots,b_s)} C(b_1,\ldots,b_s) \int f_{n_1}^{2b_1} \cdots f_{n_s}^{2b_s} = \int (\sum_{n } f_n^2)^{r} = \| ( \sum_{n } f_n^2)^{1/2} \|_{L^{2r}}^{2r}, 
    \eeq
where the left-most sum  ranges over all $s \leq r$, all pairwise distinct $n_1,\ldots, n_s$ in the  index set, and all  $(b_1,\ldots, b_s)$   with   $b_1 + \cdots + b_s = r$.
 We may conclude that 
\[ \| \sum_{n  } f_n \|_{L^{2r}}^{2r}  \leq C_r \| ( \sum_{n } f_n^2)^{1/2} \|_{L^{2r}}^{2r},\]
where we define
\[ C_r = \max_{(b_1,\ldots,b_s)} \frac{C(2b_1, \ldots, 2b_s)}{C(b_1,\ldots, b_s)},
\]
and the  maximum is taken over all $(b_1,\ldots, b_s)$  with   $b_1 + \cdots + b_s = r$ and $s \leq r$. One can observe for example that $C_r \leq \frac{(2r)!}{r!2^r} < r^r$, but all we require is that it depends only on $r$.
In conclusion, we have verified the direct inequality for the set of functions $\{f_n\}$, for each $p=2r$.

  This argument has been written in the spirit of Paley and Zygmund \cite[Lemma 2]{PalZyg30}, where it was developed to prove the Khintchine inequality for Rademacher functions. As we will require this result later on, and it is a nice illustration of Type I superorthogonality, we now also demonstrate its proof.

\subsection{The Rademacher functions}
We recall the definition of the Rademacher functions \cite[\S VI, p. 130]{Rad22}: for $n=0$,
\[ r_0(t) = 1  \text{ for $0 \leq t < 1/2$}, \quad r_0(t) = -1 \text{  for $1/2 \leq t < 1$,} \quad r_0(t+1) = r_0(t) .\]
Then we set $r_n(t) = r_0(2^n t)$ for each $n=1,2,3,\ldots.$
These satisfy the property that for distinct $n_1,n_2, \ldots, n_s$,
\beq\label{Rad_TypeI}
\int_0^1 r_{n_1}^{a_1}(t) r_{n_2}^{a_2}(t) \cdots r_{n_s}^{a_s} (t)dt =0
\eeq
unless all the integers $a_1,\ldots, a_s$ are even, in which case the integral evaluates to 1.
In particular, $\{r_n\}$ satisfies  Type I  superorthogonality on $L^{2r}[0,1]$ for every integer $r \geq 1$. 

 We may verify this as follows. Since for any $n$, $r_n(t)^2 \con 1$, it suffices to prove that (\ref{Rad_TypeI}) vanishes in the case in which $n_1>n_2> \cdots > n_s$ and all $a_i=1$. Observe that the function $r_{n_2}(t) \cdots r_{n_s}(t)$ is a step function that is constant on $2^{n_2+1}$ intervals of length $2^{-(n_2+1)}$. Thus it suffices to show that on each of these intervals, say $I$,  $\int_{I} r_{n_1}(t)dt=0$.  
In turn, each such  interval $I$ can be dissected into $2^{n_1-n_2}$ intervals of equal length, and on half these intervals $r_{n_1}(t)$ takes the value $+1$ while on the other intervals $r_{n_1}(t)$ takes the value $-1$. Consequently the integral of $r_{n_1}(t)$ over $I$ is zero, and from this we deduce (\ref{Rad_TypeI}).
   This proof is in the spirit of Kaczmarz and Steinhaus, e.g.  \cite[p. 236]{KacSte30}, \cite[p. 125]{KacSte36}; other classic sources are e.g.   \cite{Zyg02,Kac64}.

  We mention that Rademacher proved  that if $\sum_{n=0}^\infty |a_n|^2 < \infty$ then the series $\sum_{n=0}^\infty a_n r_n(t)$ converges pointwise for almost all $t \in [0,1]$  \cite[p. 135-138]{Rad22}; see also \cite[Vol. 1 Ch. V Thm. 82]{Zyg02} for a modern citation.

\subsection{Khintchine's inequality}\label{sec_TI_Khin}

We can apply the formal ideas developed above to deduce a useful inequality. 
This is Khintchine's inequality:  for each $0< p< \infty$, for any sequence $\{a_n\}$ of complex numbers,
\beq\label{TI_Khintchine}
( \sum_{n=0}^\infty |a_n|^2)^{1/2} \ll_p \| \sum_{n=0}^\infty a_n r_n(t) \|_{L^p[0,1]} \ll_p ( \sum_{n=0}^\infty |a_n|^2)^{1/2}.
\eeq
We will call the right-most inequality the direct inequality, and the left-most inequality the converse inequality.
Standard modern proofs  can be found  in e.g. \cite[Appendix D]{SingInt}, \cite[Prop. 4.5]{Wol03} (see \cite{Haa81} for precise constants). 
We will consider the case $p>1$, and our interest is that for $p=2r$ with $r \geq 1$ integral, we can prove this as an application of Type I superorthogonality; this treatment is  in the spirit of older proofs, e.g. \cite[Lemma 2]{PalZyg30}, \cite[Vol. I Ch. V Thm. 8.4]{Zyg02}.  

First, there are various reductions. One can treat the real and imaginary parts separately, so that  we only consider the case in which each $a_n$ is real. Due to the pointwise a.e. convergence mentioned above, it suffices to prove the inequalities for a truncated sum over $0 \leq n \leq N$, uniformly in $N$. 
First note that by (\ref{L2_diag}),  there is an identity on $L^2[0,1]$:  
\[( \int_0^1| \sum_{n=0}^N a_n r_n(t) |^2 dt )^{1/2}=( \sum_{n_1,n_2} a_{n_1} a_{n_2} \int_0^1 r_{n_1}(t) r_{n_2}(t)   dt )^{1/2} 
	=  (\sum_{n=0}^N |a_n|^2)^{1/2}.
\]
For the direct inequality, the main content of (\ref{TI_Khintchine}) thus lies in the case $p>2$, since for $p<2$, H\"older's inequality  shows that $\| \sum a_n r_n \|_{L^p[0,1]} \leq \| \sum a_n r_n \|_{L^2[0,1]} = (\sum |a_n|^2)^{1/2}$; analogously, for the converse inequality the main content lies in the case $p<2$. 
Moreover, for $p<2$ the converse inequality can be deduced from the direct inequality:  let $r>2$ be such that $1/2 = (1/2) (1/p + 1/r)$, so that by H\"older's inequality 
\[ (\sum_{n=0}^N |a_n|^2)^{1/2} = \| \sum a_n r_n \|_{L^2} \leq \|\sum a_n r_n \|_{L^p}^{1/2}   \|\sum a_n r_n \|^{1/2}_{L^r} .
\]
Then upon applying the direct inequality for $L^r$, we conclude that the converse inequality holds for $L^p$. 
Thus it only remains to verify the direct inequality for $p >2$. Moreover, it suffices to consider the case   $p=2r$ with $r \geq 1$ an integer, since given any $p>2$ if we let $r$ denote that integer such that $2(r-1) \leq p < 2r$, then for any function $f$ on the space $[0,1]$, $\| f\|_{L^{2r-2}[0,1]} \leq \|f\|_{L^p[0,1]} \leq \|f\|_{L^{2r}[0,1]}.$

Now let $p=2r$ with $r\geq 1$ an integer.
We may apply our formal argument for Type I functions with $f_n = a_n r_n$. Moreover, using the fact that the integral in (\ref{Rad_TypeI}) evaluates to 1 when it is nonvanishing, we see in (\ref{B_sum}) that 
\[ \sum_{(b_1,\ldots,b_s)} C(b_1,\ldots,b_s) \int f_{n_1}^{2b_1} \cdots f_{n_s}^{2b_s} =\sum_{(b_1,\ldots,b_s)} C(b_1,\ldots,b_s) a_{n_1}^{2b_1} \cdots a_{n_s}^{2b_s} 
	 = (\sum_{n} a_n^2)^r.
\]
Thus the argument concludes  as desired, and
\[ \| \sum_{n  } f_n \|_{L^{2r}}  \leq C_r^{1/2r} ( \sum_n a_n^2)^{1/2}.\]

\subsection{A theorem of Marcinkiewicz-Zygmund}
We state a nice consequence of Khintchine's inequality, which we will apply in our study of discrete operators in \S \ref{sec_TI_discrete}.
We work here with a measure space $(X,d\mu)$; in \S \ref{sec_TI_discrete} we apply it to $\ell^p(\Z)$ with counting measure,  with appropriate associated Fourier transform mapping to functions on $(-1/2,1/2]$ (identified with the torus).
\begin{thm}[Marcinkiewicz-Zygmund]\label{thm_MarZyg_p}
Let $1\leq p < \infty$ be fixed and suppose that $T$ is a bounded linear operator from $L^p(X)$ to $L^p(X)$, with norm $M_p$, that is, for all $f \in L^p(X)$,
\[ \|Tf\|_{L^p(X)} \leq M_p \|f\|_{L^p(X)}.\]
I) Then there exists a constant $C_p$ such that for any sequence $\{ f_j\}$ of functions with $f_j \in L^p(X)$,
\[ 
\| ( \sum_{j=1}^\infty |Tf_j|^2)^{1/2} \|_{L^p(X)} \leq M_p C_p \| ( \sum_{j=1}^\infty |f_j|^2)^{1/2} \|_{L^p(X)} .\]
II) Suppose moreover that $T$ is a translation-invariant operator with corresponding Fourier multiplier $m(\xi)$, and that $\{\xi_j\}_j$ is a fixed set of points. Define for each $j$ the associated operator $T_j$ acting   by $(T_j f)\widehat{\;} (\xi) = m(\xi - \xi_j) \widehat{f}(\xi)$. 
Then 
\[ 
\| ( \sum_{j=1}^\infty |T_jf_j|^2)^{1/2} \|_{L^p(X)} \leq M_p C_p\| ( \sum_{j=1}^\infty |f_j|^2)^{1/2} \|_{L^p(X)} .\]
\end{thm}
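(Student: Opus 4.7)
The strategy is the classical Marcinkiewicz--Zygmund argument: realize the square function as a Rademacher average in $L^p$, move the operator $T$ inside by linearity, bound by the operator norm, and return to the square function. The key tool is Khintchine's inequality \eqref{TI_Khintchine}, which holds in both directions (direct and converse) for every $1 \le p < \infty$.

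\textbf{Part I.} For each fixed $x \in X$, apply Khintchine's inequality pointwise to the complex sequence $\{f_j(x)\}$ to obtain the two-sided equivalence
\[
\left(\sum_j |f_j(x)|^2\right)^{p/2} \sim_p \int_0^1 \Bigl|\sum_j r_j(t) f_j(x)\Bigr|^p dt,
\]
where $A \sim_p B$ means $c_p A \le B \le C_p A$. Integrating in $x$ and swapping integrals by Fubini,
\[
\Bigl\|\Bigl(\sum_j |f_j|^2\Bigr)^{1/2}\Bigr\|_{L^p(X)}^p \sim_p \int_0^1 \Bigl\|\sum_j r_j(t) f_j\Bigr\|_{L^p(X)}^p dt.
\]
For each fixed $t$, linearity of $T$ gives $T\bigl(\sum_j r_j(t) f_j\bigr) = \sum_j r_j(t) T f_j$, and the hypothesis $\|Tg\|_{L^p(X)} \le M_p \|g\|_{L^p(X)}$ then yields
\[
\int_0^1 \Bigl\|\sum_j r_j(t) T f_j\Bigr\|_{L^p(X)}^p dt \le M_p^p \int_0^1 \Bigl\|\sum_j r_j(t) f_j\Bigr\|_{L^p(X)}^p dt.
\]
Applying the equivalence above in the forward direction to $\{Tf_j\}$ and in the reverse direction to $\{f_j\}$ produces the desired bound, with $C_p$ absorbing the two Khintchine constants. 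Following the conventions already in force, one first proves this for sequences truncated to $1 \le j \le N$, with a constant independent of $N$, and then passes to the limit.

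\textbf{Part II.} Let $M_a$ denote modulation by the character $e^{2\pi i a x}$, so that $M_a g(x) = e^{2\pi i a x} g(x)$ and $\widehat{M_a g}(\xi) = \hat g(\xi - a)$. Using $\widehat{Tf}(\xi) = m(\xi)\hat f(\xi)$, a short Fourier-side computation verifies the conjugation identity
\[
T_j \;=\; M_{\xi_j} \circ T \circ M_{-\xi_j}.
\]
Since $|M_a g| = |g|$ pointwise, one has $|T_j f_j| = |T(M_{-\xi_j} f_j)|$, while also $\sum_j |M_{-\xi_j} f_j|^2 = \sum_j |f_j|^2$. Applying Part I to the sequence $\{M_{-\xi_j} f_j\}_j$ therefore delivers Part II without further work.

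\textbf{Main obstacle.} Once Khintchine is in hand the argument is essentially formal, with no genuine analytic obstacle; the entire power of the theorem has already been invested in \eqref{TI_Khintchine}. The only points requiring care are (i) the pointwise application of Khintchine followed by Fubini, which is legitimate under the finite-truncation convention, and (ii) tracking signs in the conjugation identity for $T_j$, since the direction of the shift relating physical-side modulation to frequency-side translation must be applied consistently.
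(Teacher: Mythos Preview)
Your proof is correct and follows essentially the same approach as the paper: randomize with Rademacher functions, apply the operator bound for each fixed $t$, integrate and use Fubini, then invoke Khintchine's inequality in both directions; for Part II, your conjugation identity $T_j = M_{\xi_j}\circ T\circ M_{-\xi_j}$ is exactly the observation the paper uses to reduce to Part I.
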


 \begin{proof}
To prove part (I), it suffices to consider the case of a finite sequence of functions $f_1,\ldots, f_N$, from which the general statement   follows by the monotone convergence theorem. Recall the Rademacher functions $\{r_j\}_j$. Given $f_1,\ldots, f_N$, we define for $t \in [0,1]$ the function
 \[F(x,t) = \sum_{1 \leq j \leq N} r_j(t)f_j(x). \]
Since $T$ is linear, $TF(x,t) = \sum_j r_j(t) Tf_j(x)$, so that by the assumed boundedness of $T$, 
\[ \int_X | \sum_j r_j(t) Tf_j(x) |^p d\mu(x) \leq M_p \int_X | \sum_j r_j(t) f_j(x)|^p d\mu(x)\]
for each $t$.
By integrating in $t$ and applying Fubini's theorem,  
\[ \int_X \int_0^1 |\sum_j r_j(t) Tf_j(x)|^p dt d\mu(x) \leq M_p \int_X \int_0^1 | \sum_j r_j(t) f_j(x) |^p dt d\mu(x).\]
  Appying Khintchine's inequality for each fixed $x$ then shows that the left and right-hand sides are comparable to 
\[ 
	\int_X ( \sum_j |Tf_j|^2)^{p/2} dx  \quad \text{and} \quad 
\int_X ( \sum_j |f_j|^2)^{p/2} dx ,
\]
respectively.

To prove part (II), observe that  
$(T_jf)(x) = e^{2\pi i x \xi_j} (T(f(\cdot) e^{-2\pi i ( \cdot )\xi_j})(x).$
 As a result, for any sequence $\{f_j\}$,
\[ \sum_j |T_j(f_j)|^2 = \sum_j |T (f_j e^{-2\pi i x \xi_j})|^2.
\]
Thus the conclusion of (II) follows from applying the conclusion of (I) to the right-hand side.
\end{proof}

\section{Type II superorthogonality}\label{sec_TypeII}
We introduce a second notion of superorthogonality, now for complex-valued functions.    It is the condition that for every $2r$-tuple of functions from a sequence $\{f_n\}$,
\[  \int f_{n_1}\bar{f}_{n_2} \cdots f_{n_{2r-1}} \bar{f}_{n_{2r}}  =0 \]
as long as:\\
{\bf Type II:}  the tuple $(n_1,n_2,\ldots, n_{2r})$ has the property that there is a value $n_j$ that appears precisely once, in which case we say that the tuple has \emph{the uniqueness property}.

In this section, we prove that any collection of functions satisfying the Type II condition satisfies a direct inequality. In \S \ref{sec_TI_discrete} we will return to this type in more detail, when we study its application to discrete operators; we will also apply this type in the setting of trace functions, when we prove the Burgess bound.

\subsection{The direct inequality}\label{sec_TI_direct}

In general, a collection $\{f_n\}$ with Type II superorthogonality  satisfies a direct inequality   in $L^{2r}$ for all integers $r \geq 1$.
We expand 
\[
 \| \sum_{n} f_n \|_{L^{2r}}^{2r} = \sum_{(n_1,\ldots, n_{2r})} \int f_{n_1} \bar{f}_{n_2} \cdots f_{n_{2r-1}} \bar{f}_{n_{2r}}  
\]
in which the sum is over all tuples $(n_1, \ldots, n_{2r})$ in the index set. Under the Type II assumption, the contribution vanishes for any such tuple with the uniqueness property; hence we need only consider tuples in which every index appears at least twice, so in particular the indices take at most $r$ distinct values. Thus we can write the above expression as 
\[ \sum_{A} \sum_{\bstack{(n_1,\ldots, n_{2r})}{\{n_1,\ldots, n_{2r}\} = A}} \int f_{n_1}  \bar{f}_{n_2} \cdots f_{n_{2r-1}}\bar{f}_{n_{2r}}  ,\]
in which the first sum is over all subsets $A$  of indices, with $|A| \leq r$. Here we distinguish between a tuple $(n_1,\ldots, n_{2r})$ and the set of (distinct) values $\{n_1,\ldots, n_{2r}\}$ appearing in the tuple. Note that once such a set $A$ is fixed, there are at most $d_r$ possible $2r$-tuples   with values corresponding to the set $A$, for a combinatorial constant $d_r$. 

The right-hand side of the direct inequality can be expanded as 
\[ \| ( \sum_{n}| f_n|^2)^{1/2} \|_{L^{2r}}^{2r} = \sum_A \sum_{\bstack{(n_1,\ldots,n_{r})}{\{n_1,\ldots, n_{r}\} = A}} \int |f_{n_1}|^2 \cdots |f_{n_{r}}|^2  ,\]
where the sum is over all sets $A$ with cardinality at most $r$. To verify the direct inequality, it suffices to show that for each set $A$ with $|A| \leq r$, for every tuple $(n_1,\ldots, n_{2r})$ without the uniqueness property such that $\{ n_1 ,\ldots, n_{2r} \} = A$,
\beq\label{sqfn_direct_A_sum}
 \int| f_{n_1} \bar{f}_{n_2}\cdots f_{n_{2r-1}}\bar{f}_{n_{2r}}|   \leq  \sum_{\{ n_1,\ldots, n_r \} = A} \int  |f_{n_1}|^2 \cdots |f_{n_r}|^2  .
 \eeq
Then upon summing over all such tuples and all such sets $A$, the direct inequality   will hold, with $c_{2r}^{2r}  = d_r$.

In order to verify (\ref{sqfn_direct_A_sum}), we claim the following. Fix any  set $A$ with $|A| \leq r$.  We may partition each   $2r$-tuple $(n_1, \ldots, n_{2r})$ without the uniqueness property whose set of distinct values is $A$,   into two $r$-tuples, say $(n_{i_1,0}, \ldots, n_{i_r,0})$ and $(n_{i_1,1}, \ldots, n_{i_r,1})$, such that 
\[
\{n_{i_1,0}, \ldots, n_{i_r,0}\} = A = \{n_{i_1,1}, \ldots, n_{i_r,1}\}.
\]
 Equivalently, we claim that we can color the entries in the $2r$-tuple so that $r$ of the entries are red and $r$ of the entries are blue, and moreover each entry of $A$ appears in red at least once and  in blue at least once. 

Let us prove this. Suppose the set $A$ has entries $a_1, \ldots, a_s$ for some $s \leq r$. For each value $a_i$ that appears an even number of times in the $2r$-tuple, say $2k_i$ times,  we color $k_i$ of these red and $k_i$ of these blue.  Next we consider the set of all entries $a_i \in A$ that each appear an odd number of times  in the $2r$-tuple, say $2k_i+1$ times, with $k_i\geq 1$. (Each $a_i$ must appear at least 3 times, since the $2r$-tuple does not have the uniqueness property.)  Since $2r$ is even, there must be an even number of such entries $a_i$ in $A$. For half of them, we color $k_i+1$ red and $k_i$ blue, and for the other half we color $k_i$ red and $k_i + 1$ blue, and this proves the claim.  
 
Now we apply the partition to verify (\ref{sqfn_direct_A_sum}). 
Fix a $2r$-tuple $(n_1,\ldots, n_{2r})$ with $\{ n_1, \ldots, n_{2r} \} = A$ and construct the $r$-tuples $(n_{i_1,0}, \ldots, n_{i_r,0})$ and $(n_{i_1,1}, \ldots, n_{i_r,1})$ as above. Then, also using the fact that for any  $\al,\be \geq 0$ we have $2\al \be \leq \al^2 + \be^2$,
\begin{align*}  \int | f_{n_1} \bar{f}_{n_2} \cdots f_{n_{2r-1}} \bar{f}_{n_{2r}}|  
	& = \int  |f_{n_{i_1},0}| \cdots |f_{n_{i_r},0}| \cdot   |f_{n_{i_1},1}| \cdots |f_{n_{i_r},1}| \\
	& \leq \frac{1}{2} \int  |f_{n_{i_1},0}|^2 \cdots |f_{n_{i_r},0}|^2  + \frac{1}{2} \int     |f_{n_{i_1},1}|^2 \cdots |f_{n_{i_r},1}|^2 \\
		& \leq  \sum_{ \{n_1, \ldots, n_{r}\} = A} \int  |f_{n_1}|^2 \cdots |f_{n_r}|^2  .
\end{align*}
This proves (\ref{sqfn_direct_A_sum}) and hence verifies the direct inequality, for $p=2r$ an even integer. 
In general, the converse inequality   needs a different argument, and we will return to this in the specific setting of \S \ref{sec_TI_discrete}.

\section{Type III  superorthogonality in the work of Paley}\label{sec_Paley}

We now introduce a third notion of superorthogonality, again working  with real-valued functions for simplicity: it is the condition that for  every $2r$-tuple of functions from  a sequence $\{f_n\}$ indexed by integers $n$,

\[  \int f_{n_1}f_{n_2} \cdots f_{n_{2r}}   =0 \]
as long as:\\
 {\bf Type III:}  the tuple $(n_1,n_2,\ldots, n_{2r})$ has the property that there is an $n_j>n_\ell$ for all $\ell \neq j$.

 Type III  superorthogonality  was exploited by Paley in his study of the Walsh-Paley series \cite{Pal32}.
Recalling the Rademacher functions $\{r_n\}$, we define   a set of  functions $\{w_n\}$ as follows. Set   $w_0(t)=1$. For $n=2^{n_1} + 2^{n_2} + \cdots + 2^{n_s}$ (with $n_1> \cdots > n_s$) set 
\beq\label{psi_rad}
 w_n(t) = r_{n_1}(t) r_{n_2}(t) \cdots r_{n_s}(t).
 \eeq
The orthogonality property (\ref{Rad_TypeI}) of the Rademacher functions immediately  implies that
\[ 
\int_0^1 w_m(t) w_n(t) dt = \begin{cases} 
	&1 \quad \text{if $n =m$} \\
	& 0 \quad \text{if $n \neq m$.}
	\end{cases}
\]
  Walsh \cite{Wal21}, Kaczmarz \cite{Kac29} and Paley \cite{Pal32} studied the functions $\{w_n\}$ extensively, and Fine \cite[\S 2]{Fin49} recognized them as the characters of the Walsh group or ``dyadic group.''
   
Fundamentally,  the collection $\{w_n\}$ is a complete orthonormal system of functions on $[0,1]$; see e.g. \cite[p. 243]{Pal32}. 
  For each $n \geq 1$, define for any real-valued function $f$ on $[0,1]$ the partial sum
\[  S_n f (t) = \sum_{m=0}^{n-1}c_m(f) w_m(t), \qquad \text{with $c_m(f) =\int_0^1 f(\theta) w_m(\theta) d\theta$}.\]
Following Stein, we call this a Walsh-Paley series.
Paley developed numerous properties of the partial sums $S_n f$, proving for example via  the Hardy-Littlewood maximal function (new at that time),
that for  integrable $f$, the dyadic partial sums converge pointwise as $n \maps \infty$,
\beq\label{ptwise}
S_{2^n} f(t) \maps f(t)
\eeq
 for almost every $t \in [0,1]$ \cite[Thm. IV]{Pal32}. See also the earlier proof of Kaczmarz \cite{Kac29}.
(The pointwise a.e. convergence  of the non-dyadic sums  $S_n f(t) \maps f(t)$ for $f \in L^p[0,1]$ with $p>1$ was much more difficult, and was resolved after Carleson's work; see \cite{Bil67,Sjo69,Thi95}, and see \cite[Theorem 7]{Ste61} for a counterexample on $L^1[0,1]$.)

To illustrate Type III superorthogonality, we will focus on the dyadic differences $f_n$ defined by
\beq\label{PW_dyadic}
 f_n  = S_{2^{n}}f - S_{2^{n-1}}f.
 \eeq
Paley \cite[Thm V]{Pal32} proved a direct inequality and a converse inequality  for the sequence  $\{f_n\}$: for any $1<p<\infty$,  for any (real-valued) $f \in L^p[0,1]$,
\beq\label{TII_dir_con_together}
  \| ( \sum_{n=0}^\infty f_n^2)^{1/2} \|_{L^p[0,1]}  \ll_p
 \| f  \|_{L^p[0,1]} \ll_p \| ( \sum_{n=0}^\infty f_n^2)^{1/2}  \|_{L^p[0,1]}  .
\eeq
From these direct and converse inequalities, Paley deduced  for any fixed $n$ the bound 
\beq\label{non_dyadic}
 \| S_n f \|_{L^p[0,1]} \leq B_p \|f\|_{L^p[0,1]}
 \eeq
for any $1<p<\infty$ \cite[Thm. VI]{Pal32}. Here the deduction of the operator bound from the direct and converse inequalities is not as simple as in the formal setting of the introduction, since the direct and converse inequalities are for dyadic differences, while (\ref{non_dyadic}) is for a non-dyadic partial sum. We provide Paley's clever proof in Appendix A.

In this section, we demonstrate Paley's method to prove the direct and converse inequalities in (\ref{TII_dir_con_together}) in the case of $p=2r$ an even integer. 
In particular, we expose a curious feature of Paley's method, which is that he applies Type III superorthogonality not just for the direct inequality, but also to prove the converse inequality. This  introduces a nonconcentration inequality (Lemma \ref{lemma_noncon}) that will  play a key role in the next section, on discrete operators.

We first work formally, abstracting Paley's ideas to a general sequence of functions $\{g_n\}$ satisfying certain properties, and at the end of the section we verify that the dyadic differences $\{f_n\}$ defined above for Walsh-Paley series satisfy all the requirements of our proof.
We reserve certain details more specific to the setting of Walsh-Paley series (limiting arguments, and the reduction to $p=2r$, which again applies Khintchine's inequality), to Appendix A.

\subsection{Formal setting}\label{sec_TIII_formal}

We now describe the formal setting in which we will work before specializing to the Walsh-Paley series. 
Let  $\{\mu_m\}$ be a sequence of real-valued functions on $[0,1]$ (with small adaptations, a finite measure space will do), let $\{c_m\}$ be a fixed sequence of real numbers, and let $\{\al_m\}$ be a strictly-increasing sequence of non-negative integers.  
For each $n \geq 0$, let $G_n$ denote the partial sum
\[  G_n (t) = \sum_{0 \leq m < \al_n} c_m \mu_m(t).
\]
Then define $g_n(t) = G_{n}(t) - G_{n-1}(t)$, with the convention that $g_0(t)=G_0(t)$ (or analogously $G_{-1}(t)=0$).

We will prove that for every even integer $p=2r$, uniformly in $N \geq 0$,
\beq\label{TII_dir_con_together'}
  \| ( \sum_{n=0}^N g_n^2)^{1/2} \|_{L^p[0,1]}  \ll_p
 \| \sum_{n=0}^N g_n \|_{L^p[0,1]} \ll_p \| ( \sum_{n=0}^N g_n^2)^{1/2}  \|_{L^p[0,1]}  .
 \eeq
 We refer to the right-most inequality as the direct inequality, and the left-most inequality as the converse inequality.
We prove these inequalities under three assumptions. First, we assume that the functions $\{g_n\}$ satisfy the Type III superorthogonality condition, so that 
\beq\label{g_int}
 \int_0^1 g_{n_1}(t) \cdots g_{n_{2r}}(t) dt =0
 \eeq
as long as  $n_1> \max \{n_2,\ldots, n_{2r}\}$.
Second, we assume that uniformly in $N$,
\beq\label{max_bound}
 \| \sup_{0 \leq n \leq N} |G_{n-1} | \|_{L^p[0,1]} \ll_p \|  G_N \|_{L^p[0,1]} .
 \eeq
Third, we assume that uniformly in $N$,
\beq\label{p_fact}
\| (\sum_{n=0}^{N} {g_n}^p )^{1/p} \|_{L^p[0,1]} \ll_p \| G_N \|_{L^p[0,1]} .
\eeq

\subsection{The direct inequality}\label{sec_g_formal}
We now prove the direct inequality for the functions $\{g_n\}$. 
Recall that when we proved the direct inequality for the Type I and the Type II case, we could work quite formally, using 
nothing   but   the superorthogonality condition.  Here, we also require (\ref{max_bound}).

Fix $p=2r$ with $r \geq 1$ an integer. One could try to expand $(\sum_{0 \leq n \leq N} g_n)^p = G_N^p$ directly, hoping to apply the Type III property  wherever possible.  
 But this is more subtle to apply than Type I or Type II superorthogonality, since one needs not just a uniqueness property amongst the indices but a magnitude comparison. 
Instead, Paley introduces a telescoping sum
\[ G_N^p =\sum_{n=0}^N (G_n^p - G_{n-1}^p)
	=	\sum_{n=0}^N (( g_n + G_{n-1})^p - G_{n-1}^p)	.
	\]
For $n=0$, the contribution is $g_n^p$, which will lead to an  acceptable contribution in (\ref{G_below}) below.
For each index $n \geq 1$, we write
\[ \int_0^1 (G_n^p  - G_{n-1}^p)   = \int_0^1 (g_n^p + p g_n^{p-1} G_{n-1} + \cdots +{p \choose 2}g_n^2G_{n-1}^{p-2} + pg_n G_{n-1}^{p-1} ) .\]
For each term $g_n^j G_{n-1}^{p-j}$ with $3 \leq j \leq p-1$ there exists  some $\theta(j) \in (0,1)$ such that 
\beq\label{TII_diff}
 |\int_0^1 g_n^j G_{n-1}^{p-j}| \leq ( \int_0^1 g_n^{2} G_{n-1}^{p-2})^{\theta(j)} ( \int_0^1 g_n^p )^{1- \theta(j)}
 \leq  \int_0^1 g_n^{2} G_{n-1}^{p-2} +  \int_0^1 g_n^p 
 ;\eeq
 the first inequality is by H\"older's inequality, and the second is the simple fact that for any exponent $\theta \in (0,1)$, and $A,B \geq 0$, $A^\theta B^{1 - \theta} \leq \max \{A,B\} \leq A + B.$ (Here we use that $p$ is even so that all quantities are non-negative.)
Of course the last inequality in (\ref{TII_diff}) trivially holds for the cases $j=2$ and $j=p$ as well. 

The case $j=1$ could not be argued in this way, but in fact 
the integral of $g_n G_{n-1}^{p-1}$ vanishes by the Type III condition (\ref{g_int}):  the index $n$ of $g_n$ is strictly greater 
than any index $m$ that appears in the expansion $G_{n-1}^{p-1} = ( \sum_{m=0}^{n-1} g_m)^{p-1}$ so that   Type III superorthogonality applies.
In total, we can conclude that for each $n \geq 0$, 
\beq\label{G_below}
 \int_0^1 (G_n^p  - G_{n-1}^p)  \ll_p  \int_0^1 g_n^{2} G_{n-1}^{p-2} +  \int_0^1 g_n^p, 
 \eeq
so that upon summing  over $0 \leq n \leq N$, 
\beq\label{echo}
  \int_0^1 G_N^p  \ll_p \int_0^1 ( \sum_{n=0}^N g_n^2)( \max_{0 \leq n \leq N} |G_{n-1}f|)^{p-2}  + \int_0^1 \sum_{n=0}^N g_n^p . 
\eeq
We can apply H\"older's inequality to the first term, and trivially apply  $\sum_{n=0}^N g_n^p \leq ( \sum_{n=0}^N g_n^2 )^{p/2}$ to the second (again using that $p$ is even), to conclude that 
\[ \| G_N\|_{L^p}^p \ll_p \| ( \sum_{n=0}^N g_n^2)^{1/2} \|_{L^p}^2 \| \max_{0 \leq n \leq N} |G_{n-1} f|\; \|_{L^p}^{p-2} + \| (\sum_{n=0}^N g_n^2)^{1/2} \|_{L^p}^p.\]
By the assumed maximal bound (\ref{max_bound}),
\beq\label{TII_model_ineq}
  \| G_N\|_{L^p}^p \ll_p \| ( \sum_{n=0}^N g_n^2)^{1/2} \|_{L^p}^2 \| G_N\|_{L^p}^{p-2} + \| (\sum_{n=0}^N g_n^2)^{1/2} \|_{L^p}^p.
  \eeq
This is an inequality of the form $A^p \leq B^2 A^{p-2} + B^p$ for non-negative $A,B$. If $A \leq B$, we have already proved the direct inequality, while if $A \geq B$ so that $B/A \leq 1$, we now  deduce from (\ref{TII_model_ineq}) that $A^2 \leq B^2 ( 1 + (B/A)^{p-2}) \ll B^2$. Thus we conclude that 
\[ \|G_N\|_{L^p} \ll_p  \| ( \sum_{n=0}^N g_n^2)^{1/2} \|_{L^p},\]
proving the direct inequality.

\subsection{The converse inequality}\label{sec_g_formal_converse}

A straightforward expansion of 
\[ \int_0^1 (\sum_{n=0}^{N} g_n^2)^{p/2}  \]
is uninformative for applying  a superorthogonality condition; many tuples of indices in the expansion will not have a   uniqueness property or magnitude comparison property. Instead,
Paley employs   the following useful fact, which we call a nonconcentration inequality, following the nomenclature of Gressman \cite{Gre19x} in a related setting.
\begin{lemma}\label{lemma_noncon}
 For any integer $r \geq 1$, for any non-negative real numbers $a_n$ indexed by  a finite set $I$, 
\beq\label{TII_noncon}
 (\sum_{n \in I} a_n)^r \leq (r(r-1))^{r-1}\sum_{n \in I} a_n^r + 2 \sideset{}{^\sharp}\sum_{(n_1,\ldots, n_r) \in I^r} a_{n_1} \cdots a_{n_r}
\eeq
in which $ \sum^\sharp$ indicates that the sum restricts to those ordered tuples $(n_1,\ldots, n_r )\in I^r$ with all pairwise distinct entries.
\end{lemma}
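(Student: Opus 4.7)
The plan is to expand $(\sum_n a_n)^r$ via the multinomial theorem, separate the distinct-indices contribution from the repeats contribution, and then dominate the latter by either $\sum^\sharp$ or $\sum_n a_n^r$ through a dichotomy on $p_2 := \sum_n a_n^2$. Write $p_k := \sum_n a_n^k$. The expansion gives
\[
p_1^r \;=\; \sum\nolimits^\sharp a_{n_1} \cdots a_{n_r} \;+\; \sum_{T^c} a_{n_1} \cdots a_{n_r},
\]
where $T^c$ denotes the set of tuples $(n_1,\ldots,n_r) \in I^r$ with $n_i=n_j$ for some $i\neq j$. The case $r=1$ is immediate since $T^c$ is empty.

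Two elementary ingredients would then drive the argument. First, a union bound over the $\binom{r}{2}$ possible pairs $(i,j)$ with $n_i=n_j$: fixing such a pair, identifying $m = n_i=n_j$, and letting the remaining $r-2$ coordinates range freely in $I$ gives
\[
\sum_{T^c} a_{n_1} \cdots a_{n_r} \;\leq\; \binom{r}{2}\, p_2\, p_1^{r-2}.
\]
Second, H\"older's inequality applied to the factorization $a_n^2 = a_n \cdot a_n$ with conjugate exponents $(r-1)/(r-2)$ and $r-1$ produces the interpolation
\[
p_2 \;\leq\; p_1^{(r-2)/(r-1)}\, p_r^{1/(r-1)}, \qquad\text{equivalently}\qquad p_r \;\geq\; p_2^{r-1}/p_1^{r-2}.
\]

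I would then split on whether $p_2 < p_1^2/(r(r-1))$ or $p_2 \geq p_1^2/(r(r-1))$. In the first case the union bound yields $\sum_{T^c} < \binom{r}{2}\,p_1^r/(r(r-1)) = p_1^r/2$, so $\sum^\sharp > p_1^r/2$ and hence $p_1^r < 2\sum^\sharp$. In the second case the H\"older bound yields $p_r \geq (p_1^2/(r(r-1)))^{r-1}/p_1^{r-2} = p_1^r/(r(r-1))^{r-1}$, so $p_1^r \leq (r(r-1))^{r-1} p_r$. Either way the lemma follows upon adding the remaining non-negative term on the right-hand side.

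The main obstacle, and the only step requiring any real insight, is pinpointing the threshold $p_1^2/(r(r-1))$. It is calibrated so that the coefficient $\binom{r}{2}/(r(r-1))$ equals exactly $1/2$ (filling the $2\sum^\sharp$ budget) while simultaneously producing $(r(r-1))^{r-1}$ as the coefficient of $p_r$ in the H\"older case; any other choice would leave slack in one case and inflate the constant in the other. With this balance point identified, every remaining step is elementary.
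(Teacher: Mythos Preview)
Your proof is correct and essentially identical to the paper's: the paper also splits into the two cases $p_1^r \leq 2\sum^\sharp$ versus its failure (which, via the same union bound $\sum_{T^c}\leq \binom{r}{2}p_2 p_1^{r-2}$, is exactly your threshold $p_2 \geq p_1^2/(r(r-1))$), and in the second case uses the same interpolation $p_2 \leq p_1^{(r-2)/(r-1)} p_r^{1/(r-1)}$, phrased there as log-convexity of $\ell^p$ norms. One tiny expository slip: the factorization underlying that H\"older step is $a_n^2 = a_n^{(r-2)/(r-1)}\cdot a_n^{r/(r-1)}$ rather than $a_n\cdot a_n$, but your stated inequality and the rest of the argument are unaffected.
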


This shows that the dominant values of the function $(n_1,\ldots, n_r) \mapsto a_{n_1} \cdots a_{n_r}$ cannot concentrate on the   zero-set of the function 
\beq\label{Phi_offdiag}
\Phi(x_1,\ldots,x_r) = \prod_{i \neq j} (x_i - x_j),
\eeq
except at the origin $x_1 = \cdots = x_r=0$.
Of course one must allow for the values to concentrate on  $n_1  = \cdots =n_r$, which could dominate if for example there exists $n \in I$ such that $a_n > a_{n'}$ for all $n' \neq n$. 
 Nonconcentration inequalities are broadly useful in arguments involving superorthogonality, including our next section on discrete operators; they are also frequently used in decoupling (see e.g. the first display equation of \cite[p. 653]{BDG16}). We  provide a proof of the inequality at the end of the section.

Paley applies the nonconcetration inequality   to conclude that for $p=2r$, 
\beq\label{TII_step1}
 \int_0^1 ( \sum_{n=0}^{N} g_n^2 )^{p/2} dt \ll_p  \int_0^1 \sum_{n=0}^{N} g_n^p dt + G^\sharp ,
 \eeq
where
\[ G^\sharp = \sideset{}{^\sharp}\sum_{\bstack{(n_1,\ldots, n_r)}{0 \leq n_1, \ldots, n_r\leq N}} \int_0^1 g_{n_1}^2 \cdots g_{n_r}^2 dt,\]
and as usual the superscript $\sharp$ indicates that the sum restricts to tuples with pairwise distinct entries.
By the assumed bound (\ref{p_fact}), the  first term may be bounded by $\ll_p\|G_N \|_{L^p}^p$.
The main work is to show that
\beq\label{TII_S_conc}
G^\sharp \ll_p \int_0^1 G_N^2 ( \sum_{n=0}^{N} g_n^2)^{\frac{p-2}{2}} dt.
\eeq
Once we have proved this, we can apply these two bounds in (\ref{TII_step1}), followed by  H\"older's inequality, to conclude  that 
\[  \| (\sum_{n=0}^{N} g_n^2)^{1/2} \|_{L^p}^{p}  \ll_p   \| G_N \|_{L^p}^p+ 
	\| G_N \|_{L^p}^2 \| (\sum_{n=0}^{N} g_n^2)^{1/2} \|_{L^p}^{p-2}.\]
This is again an inequality of the form $A^p \ll B^2A^{p-2} + B^p$, so that an argument analogous to that applied to (\ref{TII_model_ineq}) confirms  that the converse inequality holds.

We now demonstrate Paley's proof of  (\ref{TII_S_conc})   using Type III superorthogonality. 
If a tuple $(n_1, \ldots, n_r)$ appears in $G^\sharp$ then $n_1, \ldots, n_r$ are all pairwise distinct and in particular there exists a strict ordering of the indices, which without loss of generality we can assume is $n_r < \cdots < n_2 < n_1 \leq N$. In particular, we  could be in a position to apply Type III superorthogonality, except for the fact that each function appearing in $G^\sharp$ is squared. Paley cleverly circumvents this by considering the quantity
\[ \int_0^1 G_N^2 g_{n_2}^2 g_{n_3}^2 \cdots g_{n_{r}}^2  dt\]
for any $n_r  \leq \cdots  \leq n_3 \leq n_2 \leq N$ (so far not assuming strict inequalities). 
Note that we can write $G_N = g_{N} + g_{N-1} + \cdots  + g_{n_2+1} + G_{n_2}$. Thus we can expand the integral above as
\begin{multline*}
 \int_0^1 G_{n_2}^2 g_{n_2}^2 g_{n_3}^2 \cdots g_{n_{r}}^2  dt
 	+ 
\sum_{n_1=n_2+1}^{N}	 \int_0^1g_{n_1}^2 g_{n_2}^2 g_{n_3}^2 \cdots g_{n_{r}}^2  dt
	 \\
	+ 2 \sum_{n=n_2+1}^{N} \int_0^1  g_{n} G_{n_2} g_{n_2}^2 g_{n_3}^2 \cdots g_{n_{r}}^2dt
	  + 2 \sum_{\bstack{n \neq m}{n_2 < n,m \leq N}} \int_0^1 g_ng_m g_{n_2}^2 g_{n_3}^2 \cdots g_{n_{r}}^2 dt.
\end{multline*}
Now Type III superorthogonality shows that the last term vanishes. Furthermore the penultimate term also vanishes by the Type III property:
  we can write  
$G_{n_2}= \sum_{0 \leq m \leq n_2} (g_m-g_{m-1})$, so that expanding the penultimate integral, we can apply (\ref{g_int}) to see that each summand in the expansion vanishes.

The non-negativity of the first term allows us to conclude that 
\beq\label{TII_n_sum}
 \sum_{n_1=n_2+1}^{N}	 \int_0^1  g_{n_1}^2g_{n_2}^2 g_{n_3}^2 \cdots g_{n_{r}}^2 dt \leq  \int_0^1  G_N^2g_{n_2}^2 g_{n_3}^2 \cdots g_{n_{r}}^2dt.
 \eeq
Now we consider the strictly ordered tuples that appear in $G^\sharp$; in each of these there is a unique largest element in the tuple, which will play the role of $n_1$ above. In particular, summing (\ref{TII_n_sum}) over all possible values of $n_r < \cdots < n_2 \leq N$, 
we see that 
\[ G^\sharp \ll_p \int_0^1 G_N^2 (  \sum_{n_r < \cdots < n_2 \leq N}g_{n_2}^2 \cdots g_{n_r}^2 ) dt \ll_p \int_0^1 G_N^2 ( \sum_{n=0}^{N} g_n^2)^{\frac{p-2}{2}} dt,
\]
where the last inequality follows by   non-negativity of the functions $g_n^2$. 
This verifies (\ref{TII_S_conc}), and the   converse inequality in (\ref{TII_dir_con_together'}) follows.

 \subsection{Application to the Walsh-Paley setting}
 We have proved in a formal setting that the direct and converse inequalities in (\ref{TII_dir_con_together'}) hold for a sequence $\{g_n\}$ of partial sum differences, under three assumptions. 
We now indicate why the Walsh-Paley setting satisfies the required assumptions.   

Recall the definition of $f_n$ from (\ref{PW_dyadic}), defined according to a fixed real-valued function $f$. Thus $f_n$ plays the role of $g_n$, the strictly increasing sequence is $\al_n=2^n$, and $S_{2^n}$ plays the role of $G_n$. (We use the convention that $f_0= S_{2^0}$, or analogously $S_{2^{-1}}f=0$.)

  To see that the sequence $\{ f_n\}$  satisfies   the Type III condition,
  suppose that $m_1 > \max \{m_2,\ldots, m_{2r}\}$; we claim that
\beq\label{Paley_TypeII'}
 \int_0^1 f_{m_1}f_{m_2} \cdots f_{m_{2r}} dt =0.
 \eeq
Recall the expansion of the functions $\{w_m\}$ in terms of   the Rademacher functions via (\ref{psi_rad}).
Observe that 
\[f_{m_1} (t) =  S_{2^{m_1}}f (t)- S_{2^{m_1-1}}f (t) =  \sum_{2^{m_1-1} \leq m < 2^{m_1}} c_m(f)w_m(t),\]
 so that $f_{m_1}$ includes $r_{m_1-1}$ as a factor in every summand of this expansion. On the other hand,
after expanding each $f_{m_j}$ with $j \geq 2$ in terms of the Rademacher functions, under the assumption that $m_j < m_1$, we see that $r_{m_1-1}$ does not occur in any of the expansions, and hence does not occur in the expansion of $f_{m_2} \cdots f_{m_{2r}}$. 
Thus the Type III condition (\ref{Paley_TypeII'})  holds for $\{f_n\}$ by means of the Type I property (\ref{Rad_TypeI}) of the Rademacher functions.  (Note that here we crucially used the fact that $m_1$ was a strict maximum;   the Type I or Type II property need not hold for the sequence $\{f_n\}$.)

\begin{remark}
By the same proof method, the sequence $\{f_n\}$  satisfies a stronger condition, that 
$
 \int_0^1 (f_{m_1})^k f_{m_2} \cdots f_{m_{s}} dt =0
$
 if $k$ is an odd positive integer and $m_1 > \{m_2,\ldots, m_s\}$.
 This type has a relation to both Type III (the case where $k=1$) and to Type I.
 See \cite[Lemma 1.4]{Sjo69}.
\end{remark}

\begin{remark}\label{remark_WPR}
In the formal setting of \S \ref{sec_TIII_formal}, if the functions $\{\mu_m\}$ were themselves of Type III, then the $\{g_n\}$ would inherit this property, for any strictly increasing choice of $\{\al_m\}$. But in the Walsh-Paley setting, while the functions $\{w_n\}$ are orthogonal, they do not themselves possess superorthogonality properties for $2r$-tuples with $r\geq 2$; see Appendix A.
The proof that the differences $\{f_n\}$ of dyadic sums  are of Type III relies on the precise nature of the expansions of the functions $w_n$ in terms of the Rademacher functions,  and the lacunary choice $\al_m=2^m$. (See \cite{Pal32} for further generalizations to other lacunary sequences.) 
\end{remark}

We next record that the maximal bound (\ref{max_bound}) holds in the Walsh-Paley setting, by \cite[Thm. 1]{Pal32}.
Indeed, Paley observes   that  $S_{2^n}f(t)$ is a (normalized) average of $f$ over an interval of length $2^{-n}$ containing the point $t$, and deduces that $|S_{2^n}f(t)| \leq 2 \Mcal f(t)$ pointwise in $t$, uniformly in $n$, where $\Mcal f$ is the (uncentered) Hardy-Littlewood maximal function of $f$.  By the boundedness of the Hardy-Littlewood maximal function, for all $1<p\leq \infty$, for all $f \in L^p$,  
\beq\label{max_bound_WP}
 \| \sup_n |S_{2^n} f| \|_{L^p[0,1]} \ll_p \|   f \|_{L^p[0,1]} .
\eeq
If we apply this with the function $f$ replaced by $S_{2^{N}} f$, and use the fact that for $n \leq N$,  $S_{2^{n-1}}(S_{2^{N}} f) = S_{2^{n-1}}f$, we see that 
$ \| \max_{0 \leq n \leq N} |S_{2^{n-1}} f|\; \|_{L^p[0,1]} \ll_p \| S_{2^{N}} f\|_{L^p[0,1]},$
verifying (\ref{max_bound}).

 Finally, we verify (\ref{p_fact}).
 Paley observes in \cite[Lemma 7]{Pal32} that for each $2 \leq p \leq \infty$, for all $f \in L^p$,  
\[
\| (\sum_{n=0}^{\infty} {f_n}^p )^{1/p} \|_{L^p[0,1]} \ll_p \| f \|_{L^p[0,1]} .
\]
This holds for $L^2(\ell^2)$ (applying both (\ref{L2_diag}) and the fact that $\{w_m\}$ is a complete orthonormal system) and for $L^\infty(\ell^\infty)$ (deduced from the normalized average observation above), and the general result follows by interpolation.
Now for even $p$, we may truncate the sum on the left-hand side to $0 \leq n \leq N$ and still obtain the inequality, by positivity.  If we apply this truncated inequality  with $S_{2^{N}}f$ in place of $f$, the right-hand side is $\|S_{2^N}f\|_{L^p}$, while the summands on the left-hand side are still $f_n$, since   for $n \leq N$, inside each difference defining $f_n$,  $S_{2^n}(S_{2^{N}} f) = S_{2^n} f$. 
This verifies (\ref{p_fact}).

The formal argument   now applies, and we conclude that for each $p=2r$ an even integer,  there exists constants $c_p, c_p'$ such that uniformly for all $N \geq 1$, 
\beq\label{TII_direct}
\| ( \sum_{n=0}^N f_n^2)^{1/2} \|_{L^p}  \leq c_p ' \| \sum_{n=0}^N f_n \|_{L^p} \leq c_p\| ( \sum_{n=0}^N f_n^2)^{1/2} \|_{L^p}.
 \eeq 
In order to obtain the full inequality (\ref{TII_dir_con_together}) for $p=2r$ from this truncated version, one must apply a limiting argument; we remark on this in Appendix A. There we also mention a further use of the Rademacher functions to then deduce the full case $1<p< \infty $ from the even integer case.

 This concludes our discussion of the Walsh-Paley setting as an example of Type III superorthogonality; we now turn briefly  to a natural variant.

\subsection{Type III'  superorthogonality for Fourier multipliers}\label{sec_TIII'}

A variant of Type III superorthogonality  is the property that for a sequence of functions $\{f_n\}$ indexed by integers $n$,  there exists an integer $c \geq 1$ such that
 \beq\label{super_gen_super_id'}
 \int f_{n_1}\overline{f}_{n_2} \cdots f_{n_{2r-1}} \overline{f}_{n_{2r}}  =0 
 \eeq
as long as: 
 
{\bf  Type III':} the tuple $( n_1,\ldots, n_{2r})$ has the property that there is an $n_j  \geq n_\ell + c$ for all $\ell \neq j$. 
  Any sequence that satisfies Type III superorthogonality also satisfies Type III'  superorthogonality (with $c=1$).
 For a sequence with the Type III' property, an $L^2$ identity such as (\ref{L2_diag})  is no longer a simple consequence. 
    Also, the Type III'  condition is not invariant if the functions are re-ordered. 
 
Let us describe a case in which Type III'  superorthogonality holds, involving  multiplier operators $T$, such that $(Tf)\widehat{\:}(\xi)  = m(\xi) \widehat{f}(\xi)$, with $m$ satisfying the usual hypotheses of the Marcinkiewicz-Mikhlin-H\"ormander theorem (e.g. \cite[Ch 4]{SingInt} or \cite[Ch VI, \S 4.4 and \S 7.6]{SteinHA}).
First we need a standard dyadic decomposition of the $\xi$-space: 
\[ 1 = \sum_j \Psi_j(\xi),\]
where $\Psi_j(\xi) = \Psi(2^j \xi)$, and $\Psi$ is smooth, compactly supported in $1/4 \leq |\xi| \leq 4$. Then by Plancherel's identity, Type III'  superorthogonality holds ($c=4$ will do) for the sequence of functions $f_j  = T_j f$, defined by 
\[(T_j f)\widehat{\:}(\xi)  = m(\xi) \Psi(2^j\xi) \widehat{f}(\xi).\]
The Type III condition fails.
Nevertheless, in  general for a sequence $\{f_j\}$ that satisfies Type III'  superorthogonality with a constant $c$, then for each   $1 \leq m \leq c$ we can define a sequence  by taking $f_j^{(m)} = f_{cj+m}$ as $j$ varies, and then for each $m$ the sequence 
$f_1^{(m)}, f_2^{(m)}, \ldots, f_n^{(m)}, \ldots $ satisfies the Type III condition.

It would be interesting to prove that $\| Tf \|_{L^p} \leq c_p \|f\|_{L^p}$ via superorthogonality. 
Construct the $c$ sequences $\{f_j^{(m)}\}_j$ as above.
 By suitably adapting Paley's arguments for the direct inequality (say for $p=2r$ with $r\geq1$ an integer), one could obtain that for each $1 \leq m \leq c$,
\[ \|\sum_j f_j^{(m)} \|_{L^p} \leq c_p \| (\sum_j |f_j^{(m)}|^2)^{1/2} \|_{L^p}.\] 
 Adding these inequalities would then provide the direct inequality in full, since 
\[ \| \sum_j f_j \|_{L^p} \leq \sum_{1 \leq m \leq c} \| \sum_j f_j^{(m)} \|_{L^p} \leq c_p \sum_{1 \leq m \leq c} \| (\sum_j |f_j^{(m)}|^2)^{1/2} \|_{L^p}
	\leq  c \cdot c_p\| (\sum_j |f_j|^2)^{1/2} \|_{L^p}.\]
In fact, reasoning of this type appeared in a direct inequality proved by C\'ordoba in the study of Bochner-Riesz operators \cite[p. 507]{Cor79}.

Paley proved his converse inequality by again exploiting the Type III condition. Suitably adapting such arguments, one could obtain that for each fixed $1 \leq m \leq c$, 
\[ \| ( \sum_j |f_j^{(m)}|^2)^{1/2} \|_{L^p} \leq c_p \| F^{(m)}\|_{L^p},\]
in which $F^{(m)}$ is defined by 
\[ (F^{(m)})\widehat{\;}(\xi) = \sum_j \Psi_{cj+m} (\xi)\widehat{f}(\xi).\]
Thus $F^{(1)} + F^{(2)} + \cdots + F^{(c)} = f$. However one cannot add the converse inequalities above to get the desired converse inequality
$ \| ( \sum_j |f_j|^2)^{1/2} \|_{L^p} \leq c_p \| f\|_{L^p},$ so this approach fails.

If instead one hoped to adapt Paley's approach to accommodate Type III'  superorthogonality within the proof, a critical point   is that the nonconcentration inequality implies (\ref{TII_step1}) so that  (\ref{TII_n_sum}) suffices.   In the case of Type III'  superorthogonality, such an approach seems to require a stronger nonconcentration inequality. There are many questions in this area, such as: in what circumstances is it true that the dominant off-diagonal terms in an expansion $(\sum_{n \in I} a_n)^r$  do not even occur close (within a $c$-neighborhood) to   the zero set of the function (\ref{Phi_offdiag}); or what other nonconentration inequalities arise when we replace  (\ref{Phi_offdiag}) by some other function?

\subsection{Proof of Lemma \ref{lemma_noncon}: Nonconcentration inequality.}
The nonconcentration inequality of Lemma \ref{lemma_noncon} has also appeared explicitly in other works such as \cite[Lemma 2.3]{IW} or \cite[Lemma 2.35]{MSZK18x}, whose proofs we follow here.
The claim is true if $r=1$, and hence we suppose $r \geq 2$. Note that if 
\beq\label{sqfn_direct_a_fails}
  (\sum_{n \in I} a_n)^r \leq  2 \sideset{}{^\sharp}\sum_{(n_1,\ldots, n_r) \in I^r} a_{n_1} \cdots a_{n_r},
  \eeq
then the nonconcentration inequality holds, and so we next assume that this condition fails, and show that 
\beq\label{sqfn_direct_a_desire}
(\sum_{n \in I} a_n)^r \leq (r(r-1))^{r-1}\sum_{n \in I} a_n^r.
\eeq
In terms of the sequence $\abf = \{a_n\}_n$, this is the claim that $\|\abf\|_{\ell^1} \leq (r(r-1))^{1 - 1/r} \|\abf\|_{\ell^r}$. 

In general we can expand the left-hand side of (\ref{sqfn_direct_a_desire}) as $A_1 + A_2$ 
in which $A_1$ is the contribution from ordered tuples  in which all indices are distinct, while $A_2$ is the remaining contribution, so that $A_2= {r \choose 2} (\sum_{n \in I} a_n^2)(\sum_{n \in I} a_n)^{r-2}$. 
Now by  the assumed failure of (\ref{sqfn_direct_a_fails}), 
$(\sum_{n \in I} a_n)^r > 2A_1$
 so that 
\[ \frac{1}{2} (\sum_{n \in I} a_n)^r \leq (\sum_{n \in I} a_n)^r - A_1 = A_2.\]
Recalling the expression for $A_2$ (and using   non-negativity of the $a_n$), we learn that 
\[ (\sum_{n \in I} a_n)^2 \leq r(r-1)(\sum_{n \in I} a_n^2).\]
We recognize this as the statement   that 
$\|\abf \|_{\ell^1} \leq (r(r-1))^{1/2} \|\abf\|_{\ell^2}.$ 
Since $1 \leq 2 \leq r$, by the logarithmic convexity of $\ell^p$ norms,
$\|\abf\|_{\ell^2} \leq \|\abf\|_{\ell^1}^{1 - \theta} \|\abf\|_{\ell^r}^{\theta}$ for that $\theta \in [0,1]$ defined by
$1/2 = (1-\theta)/1 + \theta/r$. We apply this to bound the $\ell^2$ norm, and conclude that 
$\|\abf\|_{\ell^1} \leq (r(r-1))^{\frac{1}{2\theta}} \|\abf\|_{\ell^r}$, which is the desired inequality.

\section{Type II superorthogonality: discrete operators }\label{sec_TI_discrete}
We now examine the role of Type II superorthogonality in a new setting, that of discrete arithmetic operators acting on functions of $\Z$. 
Discrete operators  gained widespread attention with work of Bourgain \cite{Bou88A,Bou88B,Bou88C,Bou89} on discrete maximal Radon transforms, such as the operator defined for a fixed integer $k \geq 2$ by
\[ Mf(n)=  \sup_{r \geq 1} \left|\frac{1}{r}  \sum_{1 \leq m \leq r} f(n-m^k) \right|.\]
Bourgain's motivation was that proving such an operator is bounded on $\ell^p$ for a certain $p$ implies a pointwise ergodic theorem  for 
$ \frac{1}{r} \sum_{1 \leq m \leq r} T^{m^k}f$ as $r \maps \infty$, for $T$ a measure-preserving transformation acting on functions in the relevant $\ell^p$ space.  (More generally $\Z$ can be replaced by $\Z^d$ and $m^k$ can be replaced by any integer-valued polynomial mapping.)

Bourgain's initial work stimulated  further investigation of discrete operators. Many  singular and maximal integral operators initially defined in the real-variable setting have a clear discrete analogue, but the discrete analogue is often surprisingly difficult to handle, because arithmetic comes into play.
One natural and interesting class of  operators is the family of discrete singular Radon transforms, defined for example in a one-dimensional setting by
\beq\label{R-op}
 Rf(n) =  \sum_{\bstack{m \in \Z}{m \neq 0}} f(n-P(m))\frac{1}{m}
 \eeq
for a fixed integer-valued polynomial $P$
(and more generally with $1/m$ replaced by $K(m)$, with $K$ an appropriate Calder\'on-Zygmund kernel, e.g. \cite[\S 1]{IW}).
The  real-variable analogue suggests that this discrete operator should be bounded on $\ell^p$ for $1<p<\infty$, but this remained out of reach until \emph{tour de force} work of Ionescu and Wainger \cite{IW}, which cleverly combined many analytic and arithmetic ideas.

The Ionescu-Wainger method has been extremely influential, appearing in many subsequent papers on discrete operators. 
We show here that their ideas can be framed in terms of    direct and converse inequalities for a certain family of discrete operators, using Type II superorthogonality. We focus on a simplified setting that   highlights the aspects of their work closest to our present focus.

\subsection{Preliminaries}

To set notation, given a function $f \in \ell^1(\Z)$, define the Fourier transform to be the  1-periodic function 
\[ \widehat{f}(\xi) = \sum_{n \in \Z} f(n) e^{-2\pi i n \xi},\]
which we may regard on the torus, identified with $(-1/2,1/2]$.
Given a  1-periodic function $h \in L^2_{\mathrm{loc}}(\R)$ which we may regard on the torus identified with $(-1/2,1/2]$, the Fourier inverse is the function defined on $\Z$ by
\[ \check{h}(n) = \int_{(-1/2,1/2]} h(\xi) e^{2\pi i n \xi} d\xi.\]
A bounded 1-periodic function $m: \R \maps \C$ defines an operator $f \mapsto (m \widehat{f})\check{\;}$, which is bounded  on $\ell^2(\Z)$ by Plancherel's theorem.
We will also use the Euclidean Fourier transform $(\Fscr f) (\xi) = \int_{\R} f(x) e^{-2\pi i x \xi}dx$ and its corresponding inverse $(\Fscr^{-1} g)(x) = \int_{\R} g(\xi) e^{2\pi i x \xi}d\xi$.

We   say that a bounded, measurable function $m$ is an $L^p(\R)$ multiplier of norm $B_p$ if   the operator $T$ defined by $Tf = \Fscr^{-1}(m  \cdot \Fscr f)$ satisfies $\|Tf\|_{L^p} \leq B_p \|f\|_{L^p}$ for  all $f \in  L^p(\R)$.
Now let us assume that $m$ is an $L^p(\R)$ multiplier of norm $B_p$ that in addition is compactly supported in $(-1/2,1/2]$.
Then the operator $T$ is a convolution operator given by $Tf = f*K$, where $K (x)= (\Fscr^{-1}m)(x) = \int_{\R} m(\xi)e^{2\pi i x \xi} d\xi$, and since we assume $m$ is compactly supported in $(-1/2,1/2]$, $K$ is $C^\infty$ and in particular its restriction $\left. K \right|_{\Z}$ to integers is well-defined. Thus we can obtain from $T$ an operator acting on functions of $\Z$ by defining
\[ T_{\mathrm{dis}}f(n)= \sum_{m \in \Z} f(n-m) K(m).\]
Alternatively, since $m(\xi)$ is supported in $(-1/2,1/2]$, we may naturally 1-periodize it by setting
\[ m_{\mathrm{per}}(\xi) = \sum_{\ell \in \Z} m(\xi  -\ell),\]
and then we can define a discrete operator $f \mapsto (m_{\mathrm{per}} \widehat{f})\check{\;}$ acting on functions $f$ of $\Z$. 
These two procedures result in the same discrete operator $T_{\mathrm{dis}}$, and in particular $m_{\mathrm{per}}(\xi)$ is the Fourier multiplier of $T_{\mathrm{dis}}$, that is $(T_{\mathrm{dis}}f)\widehat{\;} = m_{\mathrm{per}} \widehat{f}$, and $m_{\mathrm{per}}  = (\left. K \right|_{\Z}) \widehat{\;}$. See  \cite[\S 2]{MSW} for these deductions.
 We  now apply these formal notions to a specific setting.
 
 \subsection{The discrete operator}

 Let $m$ be an $L^p(\R)$ multiplier that is compactly supported in $(-1/2,1/2]$.
Fix a finite set $Z$ of positive integers, and let $\Rcal(Z)$ denote the set of irreducible fractions with denominators in $Z$, namely
\[ \Rcal(Z) = \{ a/q : q \in Z, 1 \leq a \leq q,  (a,q)=1\}.\]
(If $Z=\{q\}$ is a singleton set, we will denote $\Rcal(Z)$ by $\Rcal(q)$.)
Fix  $\ep>0$.
Given $f \in \ell^1(\Z)  $, for each $a/q \in \Rcal(Z)$ define $f_{a/q}$ by 
\beq\label{f_dfn}
 \widehat{f}_{a/q}(\xi)  =  \sum_{\ell \in \Z} m(\ep^{-1}(\xi - \ell -a/q)) \widehat{f}(\xi)=m_{\mathrm{per}}(\ep^{-1}(\xi   -a/q)) \widehat{f}(\xi),
 \eeq
 which is well-defined by the discussion above.
We will focus on the operator
\beq\label{R_op}
 f \mapsto \sum_{a/q \in \Rcal(Z)} f_{a/q} .
 \eeq
 
Ionescu and Wainger's main result \cite[Thm. 1.5]{IW}  leading to a proof of the $\ell^p$ boundedness of the operator (\ref{R-op}) is as follows. They show that for any $\del_0>0$ and $N \geq 1$,  there exists an enlargement $Z_N$ of the set $\{1,2,3,\ldots, N\}$, obtained  by including certain additional integers of size at most $e^{N^{\del_0}}$ so that the operator 
(\ref{R_op}), summed over  $a/q \in \Rcal(Z_N)$ and with $\ep< e^{-N^{2\del_0}}$, is bounded on $\ell^p(\Z)$ for every $1<p<\infty$, with operator norm at most  $ C_{p,\del_0} (\log N)^{2/\del_0}$. The most difficult aspects of the proof are (i) allowing any $0<\del_0<1$, and (ii) achieving at most logarithmic dependence on $N$ in  the operator norm.

We will focus on a key building block that underlies this theorem: the case where   $Z$ is a \emph{relatively prime set}, namely
$ \gcd (q,q')=1$ for all $q \neq q'\in Z$.  (To rule out certain vacuous cases, we also assume   that $q>1$ for all $q \in Z$; this is no limitation in applications of the method.)

In this section, we present a proof of the following main result, in which for each fixed even $p=2r$ we assume that $f_{a/q}$ has been defined according to an $L^{2r}(\R)$ multiplier $m$ supported in $(-1/2,1/2],$ as above. We  require the notion of $\om(q)$, the number of distinct prime divisors of an integer $q$. Given any set $Z$ of integers, we define
\[ \Omega(Z) = \max \{ \omega(q) : q \in Z\}.\]
 \begin{thm}\label{thm_discrete_main_goal}
  Let $Z$ be a relatively prime set of integers contained in $ (1, q(Z)]$.
 Then for any integer $r \ge 1$, as long as $\ep < r^{-1}q(Z)^{-2r}$, for all $f \in \ell^{2r}(\Z)$,
 \beq\label{discrete_main_goal}
\| \sum_{a/q \in \Rcal(Z)} f_{a/q}\|_{\ell^{2r}(\Z)} \leq C_{2r} (2^{\Om(Z)})^{1-1/r} \|f\|_{\ell^{2r}(\Z)},
 \eeq
 in which the constant $C_{2r}$ is independent of $Z ,\ep$, and $f$.
 \end{thm}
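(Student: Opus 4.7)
The plan is to group the sum by denominator and apply Type II superorthogonality to the coarser sequence. Define $F_q = \sum_{a \in \Rcal(q)} f_{a/q}$, so that $\sum_{a/q \in \Rcal(Z)} f_{a/q} = \sum_{q \in Z} F_q$. Note that Type II does not hold at the individual $f_{a/q}$ level: for example, the 4-tuple $(1/5,2/5,4/5,3/5)$ has all values distinct, yet $\tfrac{1}{5} - \tfrac{2}{5} + \tfrac{4}{5} - \tfrac{3}{5} = 0$, so the Fourier support condition (see below) is satisfiable. The key structural observation is that Type II does hold at the level of $\{F_q\}_{q \in Z}$, and once verified, the Type II direct inequality of \S\ref{sec_TypeII} reduces the theorem to a square function estimate with constant $C_{2r}(2^{\Om(Z)})^{1-1/r}$.

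To verify Type II for $\{F_q\}_{q \in Z}$, fix a $2r$-tuple $(q_1, \ldots, q_{2r}) \in Z^{2r}$ in which some $q_{j_0}$ appears exactly once, and expand
\[ \int F_{q_1} \bar F_{q_2} \cdots F_{q_{2r-1}} \bar F_{q_{2r}} = \sum_{(a_1, \ldots, a_{2r})} \int f_{a_1/q_1} \bar f_{a_2/q_2} \cdots f_{a_{2r-1}/q_{2r-1}} \bar f_{a_{2r}/q_{2r}}, \]
the outer sum being over tuples with $a_j/q_j \in \Rcal(q_j)$. By Plancherel on the torus, each inner integral is supported in Fourier where $\sum_{j=1}^{2r} (-1)^{j+1} a_j/q_j$ lies within $r\ep$ of an integer. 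Under the hypothesis $\ep < r^{-1} q(Z)^{-2r}$, this slack is strictly less than the minimal positive fractional part $1/\mathrm{lcm}(q_1, \ldots, q_{2r}) \geq q(Z)^{-2r}$ of any non-integer such sum, forcing exact equality $\sum_j (-1)^{j+1} a_j/q_j \in \Z$. Reducing modulo $q_{j_0}$ and using the pairwise coprimality of the distinct denominators in $Z$, the contributions from indices $j$ with $q_j \neq q_{j_0}$ vanish, so the condition collapses to $q_{j_0} \mid a_{j_0}$; this contradicts $\gcd(a_{j_0}, q_{j_0}) = 1$ and $q_{j_0} > 1$. Hence each summand vanishes, establishing Type II.

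The square function estimate
\[ \bigl\| \bigl( \sum_{q \in Z} |F_q|^2 \bigr)^{1/2} \bigr\|_{\ell^{2r}(\Z)} \leq C_{2r} \bigl(2^{\Om(Z)}\bigr)^{1-1/r} \|f\|_{\ell^{2r}(\Z)} \]
is the principal obstacle. The exponent $1-1/r$ is precisely the Riesz-Thorin interpolation exponent $\theta$ satisfying $1/(2r) = (1-\theta)/2$, suggesting interpolation between an $\ell^2$ endpoint (constant $\ll 1$) and an $\ell^\infty$ endpoint (constant $\ll 2^{\Om(Z)}$). The $\ell^2$ endpoint is immediate: Plancherel together with the disjointness of the Fourier supports of the multipliers $M_q(\xi) = \sum_{a \in \Rcal(q)} m_{\mathrm{per}}(\ep^{-1}(\xi - a/q))$---ensured by the coprimality of denominators combined with $\ep \leq q(Z)^{-2}$---yields $\|(\sum_q |F_q|^2)^{1/2}\|_{\ell^2} \ll \|m\|_\infty \|f\|_{\ell^2}$. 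For the large-$p$ endpoint, write $F_q = (c_q \tilde K) * f$ with $c_q$ the Ramanujan sum and $\tilde K$ the restriction to $\Z$ of the base kernel $\Fscr^{-1} m(\ep^{-1} \cdot)$; decompose $c_q$ via the M\"obius identity $c_q(n) = \sum_{d \mid q,\, d \mid n} d\, \mu(q/d)$ into at most $\tau(q) \leq 2^{\om(q)}$ divisor-pieces, and exploit the divisor-type bound $\sum_n \gcd(n,q) \, |\tilde K(n)| \ll 2^{\om(q)}$, combined with the vector-valued Marcinkiewicz-Zygmund inequality (Theorem \ref{thm_MarZyg_p}) and the $L^{2r}(\R) \to \ell^{2r}(\Z)$ transference of the base multiplier $m$. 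Interpolating between the two endpoints then yields the claimed factor $(2^{\Om(Z)})^{1-1/r}$ at $\ell^{2r}$, and combining with the Type II direct inequality completes the proof. The hardest aspect is the clean isolation of the $2^{\Om(Z)}$ factor at the large-$p$ endpoint without a spurious dependence on $|Z|$ or $q(Z)$, which is precisely the place where the arithmetic of the denominators in $Z$---and not merely their cardinality---must be brought to bear.
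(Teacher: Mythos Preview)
Your verification of Type~II superorthogonality for $\{F_q\}_{q\in Z}$ is correct and matches the paper's first step. The gap is in the square-function estimate. Interpolating between $\ell^2\to\ell^2(\Z;\ell^2(Z))$ and $\ell^\infty\to\ell^\infty(\Z;\ell^\infty(Z))$ lands in $\ell^{2r}(\Z;\ell^{2r}(Z))$, not $\ell^{2r}(\Z;\ell^2(Z))$: complex interpolation moves the inner exponent with the outer. Since $\|\cdot\|_{\ell^{2r}}\le\|\cdot\|_{\ell^2}$ on sequences, the square-function bound you claim is strictly stronger than what your interpolation delivers. The alternative endpoint $\ell^\infty(\Z;\ell^2(Z))$ would preserve the inner $\ell^2$, but it fails with a $|Z|$-free constant: take $Z$ a set of $k$ distinct primes (so $\Om(Z)=1$); for $n$ coprime to $\prod_{p\in Z}p$ one has $c_p(n)=-1$ for every $p\in Z$, and the choice $a_p=k^{-1/2}$ gives $\|\sum_p a_p K_p\|_{\ell^1}\gtrsim k^{1/2}$, so the $\ell^\infty\to\ell^\infty(\ell^2)$ norm is at least $|Z|^{1/2}$, not $O(2^{\Om(Z)})=O(1)$.

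The paper bridges this mismatch with two ingredients missing from your sketch. First, the nonconcentration inequality (Lemma~\ref{lemma_noncon}) splits $(\sum_q|F_q|^2)^r$ into a diagonal term $\sum_q|F_q|^{2r}$---exactly the $\ell^{2r}(\ell^{2r})$ quantity that your interpolation \emph{does} control, which is Proposition~\ref{prop_converse_{2r}} with the factor $(2^{\Om(Z)})^{1-1/r}$---and an off-diagonal term $\sum^\sharp_{(q_1,\ldots,q_r)}|F_{q_1}|^2\cdots|F_{q_r}|^2$ over pairwise distinct denominators. Second, the off-diagonal term is handled by a \emph{second} superorthogonality step, the $r$-linear direct inequality of Proposition~\ref{prop_maxlp_unique_rlinear} (applied as Lemma~\ref{lemma_RdF_L4_orthog_numer'}), exploiting uniqueness among \emph{numerators}; this is viable precisely because at most two of the $2r$ fractions share any given denominator (cf.\ Remark~\ref{remark_two_q}). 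That step reduces matters to the fine square function $(\sum_{a/q\in\Rcal(Z)}|f_{a/q}|^2)^{1/2}$, bounded by the Rubio de Francia--type converse inequality (Theorem~\ref{thm_RdF_LP}, Proposition~\ref{prop_converse_{2r}_RdF}) with constant $O_{2r}(1)$ and no $2^{\Om(Z)}$. A secondary technical point: your kernel-based $\ell^\infty$ argument presumes $\tilde K\in\ell^1$, which a bare $L^{2r}$ multiplier need not give; the paper separates this via the factorization of Lemma~\ref{lemma_factor} into a full-fraction piece (method of sampling, Theorem~\ref{thm_maxlp_same_denom}) and a smooth reduced-fraction piece (Theorem~\ref{thm_RdF_diag_F_Lp_smooth}).
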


We have isolated this theorem as a special case underlying \cite[Thm. 1.5]{IW} that  best illuminates the   role that direct and converse inequalities play in their method. See \S \ref{sec_gen} for a few remarks on the  general setting.

\subsection{Overview of the proof: direct and converse inequalities}
\subsubsection{Direct inequality}
 Given a set $Z$ of integers, proving that 
  the set of functions $\{f_{a/q}\}_{a/q \in \Rcal(Z)}$ satisfies some notion of superorthogonality requires  Diophantine properties of the irreducible fractions in $\Rcal(Z)$. In general this assumes some arithmetic structure on $Z$, and in our special case we will exploit the assumption that $Z$ is a relatively prime set.

Suppose we could show that for any tuple $(a_1/q_1,\ldots, a_{2r}/q_{2r})$ of elements $a_i/q_i \in \Rcal(Z)$ that has the uniqueness property,
\beq\label{f_sum_intro}
\sum_{x\in \Z} f_{a_1/q_1}(x) \overline{f}_{a_2/q_2}(x) \cdots  f_{a_{2r-1}/q_{2r-1}}(x) \overline{f}_{a_{2r}/q_{2r}}(x) =0.
\eeq
Then the formal argument in \S \ref{sec_TypeII} would immediately imply a direct inequality for the functions $\{ f_{a/q}\}_{a/q \in\Rcal(Z)}$. However, this strong property does not hold (see Remark \ref{remark_aq_unique}), and as a whole the collection $\{f_{a/q}\}_{a/q \in \Rcal(Z)}$ does not exhibit Type II superorthogonality. Instead we proceed in two steps: we first  show that (\ref{f_sum_intro}) vanishes if the tuple of denominators $(q_1,q_2,\ldots,q_{2r-1},q_{2r})$ satisfies the uniqueness property. Second, we develop a multilinear direct inequality that exploits a uniqueness property amongst numerators. This two-step process results in a more complicated direct inequality, which we now state:

   \begin{prop}[Direct inequality]\label{prop_RdF_L{2r}_Sf}
 Let $Z$ be a relatively prime set of integers contained in $ (1, q(Z)]$.
 Then as long as $\ep < r^{-1}q(Z)^{-{2r}}$,
\[  \| \sum_{a/q \in \Rcal(Z)} f_{a/q} \|_{\ell^{2r}(\Z)}
 \leq  C_{2r}\| ( \sum_{a/q \in \Rcal(Z)} |f_{a/q}|^2)^{1/2} \|_{\ell^{2r}(\Z)} + C_{2r}\| ( \sum_{q \in Z} |\sum_{a/q \in \Rcal(q)} f_{a/q} |^{2r})^{1/{2r}}\|_{\ell^{2r}(\Z)} ,
\]
for a constant $C_{2r}$ independent of $Z, \ep$ and $f$.
 \end{prop}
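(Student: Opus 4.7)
Write $F_q = \sum_{a/q \in \Rcal(q)} f_{a/q}$. The strategy has three steps: (i) verify that $\{F_q\}_{q\in Z}$ is Type II superorthogonal on $\ell^2(\Z)$; (ii) apply the formal direct inequality of \S\ref{sec_TypeII} to bound $\|\sum_q F_q\|_{\ell^{2r}}^{2r}$ by $\sum_x(\sum_q |F_q(x)|^2)^r$; and (iii) use Lemma \ref{lemma_noncon} to decompose that sum into a diagonal piece yielding the second term of the proposition and an off-diagonal piece over distinct tuples that collapses to the first term via a second, numerator-level superorthogonality argument.

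\textbf{Step 1: denominator-level superorthogonality.} Let $(q_1,\ldots,q_{2r})\in Z^{2r}$ contain some $q_{j_0}$ appearing exactly once, and expand each $F_{q_j} = \sum_{a_j \in (\Z/q_j\Z)^\times} f_{a_j/q_j}$. It suffices to prove
\[ \sum_{x\in\Z} f_{a_1/q_1}(x)\,\bar{f}_{a_2/q_2}(x) \cdots f_{a_{2r-1}/q_{2r-1}}(x)\,\bar{f}_{a_{2r}/q_{2r}}(x) = 0 \]
for every admissible choice of numerators. By Plancherel this sum is the multi-convolution of the relevant Fourier transforms at the origin of the torus. By (\ref{f_dfn}), each $\widehat{f_{a_j/q_j}}$ is supported within $\ep/2$ of $a_j/q_j \bmod 1$ and each conjugated factor within $\ep/2$ of $-a_j/q_j$; hence the sum vanishes unless the signed combination $\sum_j \pm a_j/q_j$ lies within $r\ep$ of an integer. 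Setting $Q = \mathrm{lcm}\{q_j : j\neq j_0\}$, the relatively prime hypothesis on $Z$ gives $\gcd(q_{j_0}, Q)=1$, so the signed combination equals $(\pm a_{j_0}Q + Nq_{j_0})/(q_{j_0}Q)$ for some $N\in\Z$; the numerator is a nonzero integer because $\gcd(a_{j_0}, q_{j_0})=1$ and $q_{j_0}>1$. Its distance to $\Z$ is therefore at least $(q_{j_0}Q)^{-1} \ge q(Z)^{-2r}$, which exceeds $r\ep$ under the hypothesis $\ep < r^{-1}q(Z)^{-2r}$; the required approximation is ruled out and the sum vanishes.

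\textbf{Step 2: reduction via Lemma \ref{lemma_noncon}.} Step 1 shows that $\{F_q\}_{q\in Z}$ satisfies Type II superorthogonality, so the formal direct inequality of \S\ref{sec_TypeII} yields
\[ \Bigl\|\sum_{q\in Z} F_q\Bigr\|_{\ell^{2r}(\Z)}^{2r} \ll_r \sum_{x\in\Z}\Bigl(\sum_{q \in Z}|F_q(x)|^2\Bigr)^r . \]
Applying Lemma \ref{lemma_noncon} pointwise in $x$ with $a_q = |F_q(x)|^2$ gives
\[ \Bigl(\sum_q |F_q(x)|^2\Bigr)^r \ll_r \sum_q |F_q(x)|^{2r} + \sideset{}{^\sharp}\sum_{(q_1,\ldots,q_r)\in Z^r} |F_{q_1}(x)|^2\cdots|F_{q_r}(x)|^2, \]
where $\sideset{}{^\sharp}\sum$ restricts to tuples with pairwise distinct entries. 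Summing in $x$, the diagonal piece contributes $\sum_q \|F_q\|_{\ell^{2r}}^{2r}$, which by Fubini equals $\|(\sum_q|F_q|^{2r})^{1/(2r)}\|_{\ell^{2r}}^{2r}$: precisely the second term of the target bound raised to the $2r$-th power.

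\textbf{Step 3: numerator-level collapse on distinct tuples.} For pairwise distinct $(q_1,\ldots,q_r)\in Z^r$, expand
\[ \sum_x |F_{q_1}(x)|^2 \cdots |F_{q_r}(x)|^2 = \sum_{(a_i),(b_i)} \sum_x \prod_{i=1}^r f_{a_i/q_i}(x)\,\bar{f}_{b_i/q_i}(x). \]
Running the Fourier-support and coprimality argument of Step 1 on the signed combination $\sum_i (a_i-b_i)/q_i$, whose representation as a fraction has denominator $q_1\cdots q_r \le q(Z)^r$ (so that the weaker condition $\ep < r^{-1}q(Z)^{-r}$, implied by the hypothesis, suffices), annihilates every term with some $a_i \neq b_i$. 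The surviving contribution is $\sum_x\prod_i G_{q_i}(x)$ with $G_q = \sum_{a}|f_{a/q}|^2$, and enlarging the $\sharp$-sum to all tuples in $Z^r$ gives
\[ \sideset{}{^\sharp}\sum \sum_x |F_{q_1}|^2\cdots|F_{q_r}|^2 \;\le\; \sum_x \Bigl(\sum_q G_q(x)\Bigr)^r = \Bigl\|\Bigl(\sum_{a/q\in\Rcal(Z)}|f_{a/q}|^2\Bigr)^{1/2}\Bigr\|_{\ell^{2r}}^{2r}, \]
the first term of the proposition raised to the $2r$-th power. Combining with Step 2 and taking $(2r)$-th roots (via $(A+B)^{1/(2r)}\le A^{1/(2r)}+B^{1/(2r)}$) concludes the proof. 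The main obstacle lies in Step 1: coordinating the relatively prime arithmetic of $Z$ against the cumulative $\ep$-fuzziness of $2r$ Fourier supports so that the Diophantine separation $q(Z)^{-2r}$ survives is exactly what dictates the precise smallness of $\ep$ assumed in the hypothesis.
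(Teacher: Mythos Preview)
Your proof is correct, and Steps~1--2 match the paper's argument exactly: verify Type~II superorthogonality for $\{F_q\}_{q\in Z}$ via the Fourier-support/coprimality computation, apply the formal direct inequality, then split $(\sum_q|F_q|^2)^r$ via the nonconcentration Lemma~\ref{lemma_noncon}.

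Step~3 is where you diverge from the paper, and your route is genuinely simpler. The paper treats the off-diagonal piece $\sideset{}{^\sharp}\sum_{(q_1,\ldots,q_r)} \sum_x |F_{q_1}|^2\cdots|F_{q_r}|^2$ by invoking a general \emph{multilinear direct inequality} (Proposition~\ref{prop_maxlp_unique_rlinear}), whose proof requires a Hall-marriage-type sorting lemma (Lemma~\ref{lemma_Hall_marriage}) to partition $2r$-tuples without the uniqueness property into two balanced $r$-tuples. This yields the inequality $\sum_x|F_{q_1}|^2\cdots|F_{q_r}|^2 \leq C_r \sum_x S_{q_1}(f)^2\cdots S_{q_r}(f)^2$. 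You instead observe directly that in this specific configuration (pairwise coprime distinct $q_i$, exactly two numerators $a_i,b_i$ per denominator), every cross term with some $a_i\neq b_i$ vanishes by the same Fourier-support argument, so one obtains the \emph{identity} $\sum_x|F_{q_1}|^2\cdots|F_{q_r}|^2 = \sum_x \prod_i G_{q_i}$. This bypasses the sorting lemma entirely. The paper's multilinear inequality is more portable (it applies whenever Type~II superorthogonality holds, without the ``two-per-denominator'' structure), but for the present proposition your shortcut is cleaner and loses no constant.
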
 

Next, we require a converse inequality for each of the terms on the right-hand side.
While we have seen superorthogonality   play a role in the proof of   Khintchine's converse inequality (by duality), and  in Paley's converse inequality, superorthogonality seems to be of no help for the converse inequality for the functions $\{f_{a/q}\}$. Instead, for a converse inequality for the first term, we  use ``uniform spacing'' in the Fourier transform, which enables a square function estimate that is adapted to frequency projections onto arbitrary intervals that are regularly (rather than dyadically) spaced. For the second term, we use the ``method of sampling,'' and arithmetic properties of the set $Z$ of denominators.
 
\subsubsection{The first converse inequality}
 In general, we say a countable collection of real numbers $\{ \xi_j \}$ is   \emph{$\delta$-separated} if any open interval  of length $\delta$ contains at most one point of $\{\xi_j\}$. 
We now state a general result: a converse inequality in $\ell^p$ that holds for a sequence of functions $\{f_j\}$ with $f_j = T_j f$ where $T_j$ has multiplier $m(\xi - \xi_j)$, as long as $\{\xi_j\}$ is a $\del$-separated set, and $m$ is an $L^p$ multiplier  supported on a subinterval of $(-1/2,1/2]$ of diameter sufficiently small relative to $\del$.   (In particular, there can be at most $O(\del^{-1})$ points in a  $\del$-separated set contained in $(-1/2,1/2]$, so  in what follows,  the indices $j$  lie in an appropriate finite set.)

To be precise, if $Tf = f * K$ is a convolution operator bounded on $L^p(\R)$ with distribution kernel $K$, then $\Fscr(K)(\xi) = m(\xi) = \int K(x) e^{-2\pi i x\xi}dx$ is a bounded function, and so $m$ is the $L^p (\R)$ multiplier such that $\Fscr(Tf)(\xi) = m(\xi) \Fscr(f)(x)$; if $T$ has norm $A_p$ on $L^p(\R)$,   $m$ has multiplier norm $A_p$. If in addition we assume $m$ is supported in $(-1/2,1/2]$, then as remarked before we can periodize it to $m_{\mathrm{per}}(\xi)$ and define the discrete operator $T_{\mathrm{dis}}$ with Fourier multiplier $m_{\mathrm{per}}$; then Parseval-Plancherel states that $\|T_{\mathrm{dis}} f\|_{\ell^2(\Z)}^2=\| m \widehat{f}\|^2_{L^2(-1/2,1/2]}.$
In what follows, we also consider for each shift $\xi_j$ in a well-separated set, an operator $T_j$ with multiplier $m(\xi-\xi_j)$. In order to regard either $T$ or $T_j$ as an operator on discrete functions, we must periodize $m(\xi)$ and $m(\xi - \xi_j)$ and define the corresponding discrete operators. However, in order to simplify notation in the following theorem, we also denote the discretization of $T_j$ by $T_j$.

\begin{thm}\label{thm_RdF_LP}
Let $0< \del<1$ and $2 \leq p < \infty$ be given. 
Let $m$ be an $L^p(\R)$ multiplier of norm $A_p$ and assume that $m$ is supported in $|\xi| \leq c_0 \delta$ for a constant $c_0 <1/2$.   Given a point $\xi_j \in (-1/2,1/2]$, let $T_j$ be the operator with multiplier $m(\xi - \xi_j)$.  If a set $\{\xi_j\}$ of points in $(-1/2,1/2]$ is  $\del$-separated, then the corresponding discrete operator $T_j$ has the property that for every $f \in \ell^{p}(\Z)$,
\[ \| ( \sum_j |T_j f|^2)^{1/2} \|_{\ell^p(\Z)} \leq C_p \|f \|_{\ell^p(\Z)},\]
for a constant $C_p$ depending only on $c_0$ and $A_p$.
\end{thm}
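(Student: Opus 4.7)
The plan is to decompose $T_j = T_j\circ S_j$ where $S_j$ is a smooth frequency projection onto a slightly wider but still pairwise disjoint band, use the Marcinkiewicz-Zygmund theorem (Theorem~\ref{thm_MarZyg_p}(II)) to pull the single multiplier out of the square function, and then prove the remaining square function estimate for the $S_j$'s by a Rubio de Francia-type argument adapted to uniformly (rather than dyadically) spaced frequency bands.

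First I would fix a Schwartz function $\phi:\R\to[0,1]$ with $\phi\equiv 1$ on $[-c_0,c_0]$ and $\supp \phi \subset [-c_1,c_1]$ for some $c_0<c_1<1/2$. For each $j$, let $S_j$ denote the discrete multiplier operator whose torus symbol is the periodization of $\phi((\xi-\xi_j)/\delta)$; since the $\xi_j$ are $\delta$-separated and $2c_1<1$, these symbols have pairwise disjoint supports on the torus. The support condition $|\xi|\le c_0\delta$ on $m$ gives the pointwise identity $m(\xi-\xi_j)=\phi((\xi-\xi_j)/\delta)\,m(\xi-\xi_j)$, and hence $T_jf = T_j(S_jf)$. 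Theorem~\ref{thm_MarZyg_p}(II), applied with the base operator $T$ being the discrete multiplier whose symbol is the periodization of $m$ and with $f_j:=S_jf$, then yields
\[
\Bigl\|\Bigl(\sum_j |T_jf|^2\Bigr)^{1/2}\Bigr\|_{\ell^p(\Z)}
\leq M_p B_p \Bigl\|\Bigl(\sum_j |S_jf|^2\Bigr)^{1/2}\Bigr\|_{\ell^p(\Z)},
\]
where $B_p$ is the Marcinkiewicz-Zygmund constant and $M_p\le A_p$ by the standard principle that an $L^p(\R)$ multiplier supported in a fundamental domain of $\Z$ periodizes to a discrete multiplier of the same norm.

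The main obstacle is the remaining square function estimate
\[
\Bigl\|\Bigl(\sum_j |S_jf|^2\Bigr)^{1/2}\Bigr\|_{\ell^p(\Z)}
\leq C_p' \|f\|_{\ell^p(\Z)}, \qquad 2\leq p<\infty.
\]
The route I would take is via Khintchine's inequality (\ref{TI_Khintchine}) used in reverse: the left-hand side is comparable up to $p$-dependent constants to $\|\sum_j r_j(t)S_jf\|_{L^p_t\ell^p_x(\Z)}$, and for each fixed $t$ the operator $\sum_j r_j(t)S_j$ is a discrete Fourier multiplier with torus symbol $M_t(\xi)=\sum_j r_j(t)\phi((\xi-\xi_j)/\delta)$, a disjointly-supported sum of $\pm$-signed translates of a single smooth bump at the common scale $\delta$. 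By exploiting the disjoint supports and the common scale $\delta$ --- so that per-bump derivative estimates $|\partial^k\phi((\xi-\xi_j)/\delta)|\lesssim_k\delta^{-k}$ assemble into a uniform bound on $M_t$ rather than accumulating with the number of bumps --- one checks that $M_t$ satisfies uniform Marcinkiewicz/Mikhlin-type multiplier bounds on the torus, and hence the corresponding discrete convolution operator is bounded on $\ell^p(\Z)$ with norm depending only on $p$ and $\phi$. A second invocation of Khintchine then recovers the square function bound.

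The most delicate point in the plan is precisely this uniform multiplier estimate for $M_t$: one must use both the hypothesis $c_1<1/2$ (so that distinct translates of $\phi(\cdot/\delta)$ never overlap after periodization to the torus) and the common scale $\delta$ (so that there is no scale mismatch between different bumps). Assembling the two steps produces the theorem with $C_p = M_p B_p C_p'$, depending only on $c_0$, $\phi$, and $A_p$.
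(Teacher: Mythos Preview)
Your reduction to the smooth projections $S_j$ via Theorem~\ref{thm_MarZyg_p}(II) is exactly what the paper does, and your Khintchine maneuver correctly reduces the square function bound for the $S_j$ to the uniform $\ell^p$-boundedness of the single multiplier $M_t(\xi)=\sum_j r_j(t)\phi((\xi-\xi_j)/\delta)$. The gap is your justification of this last bound: $M_t$ does \emph{not} satisfy Mikhlin or Marcinkiewicz hypotheses uniformly in $\delta$ and in the configuration $\{\xi_j\}$. The Mikhlin condition asks for $|\xi|^k|\partial^k M_t(\xi)|\leq C$, but a bump centred at some $\xi_j$ of size $\sim 1$ contributes $|\partial^k M_t(\xi)|\sim\delta^{-k}$ there, so $|\xi|^k|\partial^k M_t(\xi)|\sim(|\xi_j|/\delta)^k$ can be as large as $\delta^{-k}$. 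Likewise the Marcinkiewicz BV condition fails: on a dyadic interval of length $\sim 1$ the signed bumps force $\sim 1/\delta$ oscillations, hence total variation $\sim 1/\delta$. The disjointness of the bumps prevents the derivative bounds from \emph{adding}, but it does not make them \emph{dyadically scaled}, which is what those theorems need.

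The paper handles the bound for the $S_j$ square function (Theorem~\ref{thm_RdF_Psi}) by interpolating between $p=2$ (Plancherel plus disjoint supports) and $p=\infty$, and the $p=\infty$ endpoint is the real content: by duality it amounts to $\|\sum_j a_j S_j f\|_{\ell^\infty}\leq C\|f\|_{\ell^\infty}$ uniformly over $\sum|a_j|^2=1$, which is proved by showing the convolution kernel $K(n)=\bigl(\sum_j a_j e^{2\pi i n\xi_j}\bigr)K_0(n)$ has $\|K\|_{\ell^1(\Z)}\leq C$. That in turn rests on the exponential-sum estimate of Lemma~\ref{lemma_RdF_exp_sum}, a discrete Bessel/large-sieve inequality exploiting the $\delta$-separation. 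This kernel argument is the substitute for the multiplier theorem you invoked; once you have it, your Khintchine route and the paper's interpolation route are essentially equivalent packagings of the same endpoint.
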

 
We prove this using ideas of Rubio de Francia \cite{RdF85}. It   implies a converse inequality for the first term on the right-hand side of Proposition \ref{prop_RdF_L{2r}_Sf}, once we show that the points in $\Rcal(Z)$ are sufficiently well-separated.
\begin{lemma}\label{lemma_sep}
If $Z$ is a set of integers contained in $ (1, q(Z)]$, then $\Rcal(Z)$ is $\del$-separated for all $\del < q(Z)^{-2}$. Moreover,
 $\union_{q \in Z} \Rcal(q)$ is a disjoint union. 
\end{lemma}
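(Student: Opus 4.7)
The plan is to prove both assertions using only the definition of $\Rcal(Z)$ together with the elementary fact that any rational number has a unique representation in lowest terms with positive denominator. Neither claim will require the relative primality hypothesis on $Z$, only that $Z \subset (1, q(Z)]$.

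For the disjointness of the union $\bigcup_{q\in Z} \Rcal(q)$, I would argue directly: if some rational $r$ belongs to both $\Rcal(q)$ and $\Rcal(q')$ with $q,q' \in Z$, then $r$ has two expressions in lowest terms with positive denominators $q$ and $q'$, forcing $q=q'$ by uniqueness of reduced form. Hence $\Rcal(q) \cap \Rcal(q') = \emptyset$ whenever $q \neq q'$, which is what was asserted.

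For the separation claim, I would take two distinct elements $a/q, a'/q' \in \Rcal(Z)$ and estimate
\[
\left| \frac{a}{q} - \frac{a'}{q'} \right| \;=\; \frac{|aq' - a'q|}{qq'}.
\]
The distinctness of the two fractions makes $aq' - a'q$ a nonzero integer, so its absolute value is at least $1$, giving $|a/q - a'/q'| \geq 1/(qq') \geq 1/q(Z)^2 > \delta$. Because any two distinct points of $\Rcal(Z)$ are therefore separated by more than $\delta$ on the real line, no open interval of length $\delta$ can contain two of them, which is precisely $\delta$-separation.

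The only mild subtlety, should one want the separation interpreted on the torus rather than on $\R$, is to also bound $1 - |a/q - a'/q'|$ from below. Since $q,q' \geq 2$ force $a/q, a'/q' \in (0,1)$, the integer $aq'-a'q$ lies strictly between $-qq'$ and $qq'$, hence $|aq'-a'q| \leq qq'-1$, which yields $1 - |a/q - a'/q'| \geq 1/(qq') \geq 1/q(Z)^2$ by the same integrality argument. There is no genuine obstacle in this proof; both statements reduce to observing that a nonzero integer has absolute value at least one.
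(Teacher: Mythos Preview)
Your proof is correct and follows essentially the same approach as the paper: both use the fact that for distinct reduced fractions $a/q$ and $a'/q'$ the integer $aq'-a'q$ is nonzero, hence $|a/q-a'/q'|\geq 1/(qq')\geq q(Z)^{-2}$, and both deduce disjointness from uniqueness of the reduced-form representation. Your additional remark handling the torus interpretation is a nice touch not present in the paper, but the core argument is identical.
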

\begin{proof}
To show that no open interval of length $\del$ can contain two distinct elements $a/q$ and $a'/q'$ in $\Rcal(Z)$, suppose on the contrary that $|a/q- a'/q'| \leq \del$. Since $a/q$ and $a'/q'$ are both irreducible fractions,   $a,q$ and $a',q'$ are distinct as pairs, so that $aq'-a'q$ is a nonzero integer, implying
\[\frac{1}{qq'} \leq \frac{|aq'-a'q|}{qq'} = \left|\frac{a}{q} - \frac{a'}{q'}  \right|\leq \del.
\]
This implies $\del \geq q(Z)^{-2}$, a contradiction. Thus indeed the set $\Rcal(Z)$ is $\del$-separated. 
This argument also shows that $\union_{q \in Z} \Rcal(q)$ is a disjoint union, since arguing as above shows that $a/q \not\in \Rcal(q')$ for any $q' \neq q$.
\end{proof}

As a result, Theorem \ref{thm_RdF_LP} immediately implies the first converse inequality we require, as long as $\ep$ is sufficiently small.

\begin{prop}[First converse inequality]\label{prop_converse_{2r}_RdF}
Let an integer $r \geq 1$ be fixed. Let $Z$ be a  set of integers contained in $ (1, q(Z)]$, and suppose $\ep<  q(Z)^{-2}$. Then 
\[ \| ( \sum_{a/q \in \Rcal(Z)} |f_{a/q}|^2)^{1/2} \|_{\ell^{2r}(\Z)} \leq C_{2r} \|f\|_{\ell^{2r}(\Z)},\]
for a constant $C_{2r}$ depending only on the $L^{2r}$ norm of the multiplier $m$. 
\end{prop}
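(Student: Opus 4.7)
The plan is to deduce this directly from Theorem \ref{thm_RdF_LP} and Lemma \ref{lemma_sep}; the proposition is essentially a combination of those two ingredients, once one verifies that the dilated multiplier defining $f_{a/q}$ fits the hypotheses of Theorem \ref{thm_RdF_LP}.

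First I would identify each $f_{a/q}$ as $T_{a/q}f$, where $T_{a/q}$ is the discrete convolution operator whose Fourier multiplier on the torus is $\tilde{m}_{\mathrm{per}}(\xi - a/q)$, with
\[
\tilde{m}(\xi) := m(\varepsilon^{-1}\xi).
\]
By the definition in (\ref{f_dfn}), this is exactly what $f_{a/q}$ is. The support of $\tilde{m}$ is contained in the interval $|\xi| \leq \varepsilon/2$, since $m$ is supported in $(-1/2,1/2]$. Moreover, since $L^p(\R)$ multiplier norms are invariant under the dilation $\xi \mapsto \varepsilon^{-1}\xi$, the function $\tilde m$ is an $L^{2r}(\R)$ multiplier with the same norm $A_{2r}$ as $m$.

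Next I would verify the separation and support conditions required by Theorem \ref{thm_RdF_LP}. By Lemma \ref{lemma_sep}, $\mathcal{R}(Z)$ is $\delta$-separated for every $\delta < q(Z)^{-2}$. Given the hypothesis $\varepsilon < q(Z)^{-2}$, pick any $\delta$ strictly between $\varepsilon$ and $q(Z)^{-2}$, so that $\mathcal{R}(Z)$ is $\delta$-separated. Then $\tilde m$ is supported in $|\xi| \leq \varepsilon/2 \leq c_0 \delta$ with $c_0 := \varepsilon/(2\delta) < 1/2$, so the hypotheses of Theorem \ref{thm_RdF_LP} are met with $\{\xi_j\} = \mathcal{R}(Z)$, multiplier $\tilde m$, and exponent $p = 2r$.

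Applying Theorem \ref{thm_RdF_LP} directly yields
\[
\Bigl\| \Bigl( \sum_{a/q \in \Rcal(Z)} |T_{a/q}f|^2 \Bigr)^{1/2} \Bigr\|_{\ell^{2r}(\Z)} \leq C_{2r} \|f\|_{\ell^{2r}(\Z)},
\]
with a constant depending only on $c_0$ and on the $L^{2r}(\R)$ multiplier norm of $m$. Since $f_{a/q} = T_{a/q}f$, this is precisely the desired inequality. The main (minor) obstacle is the bookkeeping that ensures the dilated multiplier $\tilde m$ has support radius strictly smaller than half the separation scale of $\mathcal{R}(Z)$; the condition $\varepsilon < q(Z)^{-2}$ handles this with room to spare, so that $c_0$ can be chosen strictly below $1/2$ independently of $Z$ and $\varepsilon$.
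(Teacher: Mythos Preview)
Your proposal is correct and matches the paper's approach exactly: the paper states that ``Theorem~\ref{thm_RdF_LP} immediately implies the first converse inequality we require, as long as $\ep$ is sufficiently small,'' invoking Lemma~\ref{lemma_sep} for the $\delta$-separation of $\Rcal(Z)$. Your write-up simply fills in the routine bookkeeping (identifying $f_{a/q}=T_{a/q}f$ for the dilated multiplier $\tilde m(\xi)=m(\ep^{-1}\xi)$, noting dilation-invariance of the $L^{2r}$ multiplier norm, and checking the support condition) that the paper leaves implicit.
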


Note that for this converse inequality and the one below, we no longer require $Z$ to be a relatively prime set, although we did require this for the direct inequality.

\subsubsection{The second converse inequality}
Treating the second term on the right-hand side of  Proposition \ref{prop_RdF_L{2r}_Sf}  requires a different approach; here we apply the ``method of sampling'' developed in another seminal paper on discrete operators, by Magyar, Stein and Wainger \cite{MSW}. We record the outcome of the method of sampling later in Theorem \ref{thm_maxlp_same_denom}, and state the relevant consequence here; we still call it a converse inequality although it is not strictly speaking for a square function.

\begin{prop}[Second converse inequality]\label{prop_converse_{2r}}
Let an integer $r \geq 1$ be fixed. Let $Z$ be a   set of integers contained in $ (1, q(Z)]$, and suppose $\ep <  q(Z)^{-2}$. Then 
\[ \| (\sum_{q \in Z} | \sum_{a/q \in \Rcal(q)} f_{a/q}|^{2r})^{1/{2r}} \|_{\ell^{2r}(\Z)} \leq C_{2r} (2^{\Om(Z)})^{1-1/r} \|f\|_{\ell^{2r}(\Z)},\]
for a constant $C_{2r}$ depending only on the $L^{2r}$ norm of the multiplier $m$.
\end{prop}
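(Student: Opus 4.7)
The plan is to begin with the observation that, since the outer $\ell^{2r}(\Z)$ norm and the inner $\ell^{2r}(Z)$ sum share an exponent, the left-hand side simplifies to
\[ \Bigl( \sum_{q \in Z} \| T_q f\|_{\ell^{2r}(\Z)}^{2r} \Bigr)^{1/(2r)}, \qquad T_q f := \sum_{a/q \in \Rcal(q)} f_{a/q}, \]
so the task reduces to bounding this aggregated sum. For each $q \in Z$, the operator $T_q$ is a discrete Fourier multiplier whose symbol $M_q(\xi) = \sum_{a/q \in \Rcal(q)} m_{\mathrm{per}}(\ep^{-1}(\xi - a/q))$ is a sum of bumps of width $O(\ep) < q^{-2}$ placed at the reduced fractions modulo $q$. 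By Lemma \ref{lemma_sep} these bumps are pairwise disjoint within each $q$, and the collections of bumps indexed by different $q \in Z$ are mutually disjoint as well.

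The essential tool is the Magyar--Stein--Wainger method of sampling, packaged as Theorem \ref{thm_maxlp_same_denom}. This result transfers $\ell^p(\Z)$ bounds for multiplier operators of the form $T_q$ back to $L^p(\R)$ bounds on the underlying continuous multiplier $m$, at the price of an arithmetic factor controlled by Fourier analysis of the indicator function $\mathbf{1}_{(a,q)=1}$ on $\Z/q\Z$. Because this indicator is multiplicative in $q$, its relevant Fourier norms factor across primes, producing a controlling quantity of order $2^{\om(q)}$ rather than the much larger $\phi(q) = |\Rcal(q)|$. My approach would be to simply cite Theorem \ref{thm_maxlp_same_denom}, after verifying its hypotheses (smallness of $\ep$ relative to $q^{-2}$, and the multiplier status of $m$ on $L^{2r}(\R)$).

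To extract the precise exponent $1-1/r$, I would interpolate between two endpoint bounds furnished by the sampling theorem. At the $p=2$ endpoint, Plancherel combined with the disjoint Fourier supports across $q \in Z$ gives
\[ \sum_{q \in Z} \|T_q f\|_{\ell^2(\Z)}^2 = \Bigl\| \sum_{q \in Z} T_q f \Bigr\|_{\ell^2(\Z)}^2 \ll \|f\|_{\ell^2(\Z)}^2, \]
with no $q$-dependence at all. At a large-$p$ endpoint, the method of sampling yields a bound that scales as $2^{\om(q)} \le 2^{\Om(Z)}$. Complex interpolation at the intermediate exponent $p = 2r$, with parameter $\theta$ determined by $1/(2r) = (1-\theta)/2$ so $\theta = 1 - 1/r$, then produces precisely the factor $(2^{\Om(Z)})^{1-1/r}$ on the bound for $\|(T_q f)_{q}\|_{\ell^{2r}(\Z \times Z)}$, which is the left-hand side of the desired inequality.

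The main obstacle is the method of sampling itself, which is the substantive content of Theorem \ref{thm_maxlp_same_denom} and which I would take here as a black box. A secondary technical subtlety is verifying that the $p=2$ endpoint truly escapes $q$-dependence: this rests crucially on the disjoint Fourier supports guaranteed by $\ep < q(Z)^{-2}$, ensuring that no interference between distinct $q$'s spoils the orthogonality argument. Once these two endpoint bounds are in hand, the interpolation step is routine and the arithmetic factor is sharp in the sense that it recovers the $(2^{\Om(Z)})^{1-1/r}$ needed to feed into Theorem \ref{thm_discrete_main_goal}.
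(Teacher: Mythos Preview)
Your outline correctly identifies the two ingredients — the $p=2$ endpoint via Plancherel and disjoint Fourier supports, and a second endpoint carrying the $2^{\omega(q)}$ factor via M\"obius-type inclusion–exclusion over $\Rcal(q)$ — and the interpolation parameter $\theta=1-1/r$ is exactly right. But there is a real gap in the ``large-$p$ endpoint'' step.

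The hypothesis of the proposition only gives that $m$ is an $L^{2r}(\R)$ multiplier. To interpolate against the $p=2$ bound and land at $p=2r$ with exponent $1-1/r$, you need a vector-valued bound for $(T_q f)_{q\in Z}$ at some exponent strictly larger than $2r$ (you implicitly take $p=\infty$). Corollary~\ref{cor_sampling} at exponent $p$ requires $m$ to be an $L^p(\R)$ multiplier, so it gives you nothing beyond $p=2r$. At $p=2r$ itself it gives only the single-$q$ bound $\|T_q f\|_{\ell^{2r}}\le C_{2r}2^{\omega(q)}\|f\|_{\ell^{2r}}$; summing over $q$ costs a factor of $|Z|$, which is exactly the loss the paper warns about after~(\ref{2r_expansion}). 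So you have no usable second endpoint and the interpolation cannot start.

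The paper supplies the missing idea: the factorization identity of Lemma~\ref{lemma_factor}. Because $\ep<q(Z)^{-2}$, one can write
\[
\sum_{u\in\Rcal(q)} m_{\mathrm{per}}(\xi-u)
=\Bigl(\sum_{w\in\Fcal(q)} m_{\mathrm{per}}(\xi-w)\Bigr)\cdot
\Bigl(\sum_{u\in\Rcal(q)} \Psi_{\mathrm{per}}(\del^{-1}(\xi-u))\Bigr),
\]
with $\Psi\in C^\infty$ chosen so that $\Psi(\del^{-1}\cdot)\equiv 1$ on the support of $m$. The first factor is a sum over \emph{full} fractions, so Theorem~\ref{thm_maxlp_same_denom} bounds it on $\ell^{2r}$ with a constant \emph{independent of $q$}; this absorbs the general multiplier $m$ at the single exponent $2r$ where its norm is known. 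The second factor involves only the smooth $\Psi$, and for it the $p=\infty$ endpoint can be established by a direct kernel estimate (since $\Fscr^{-1}\Psi$ decays rapidly), after which your interpolation argument goes through verbatim as Theorem~\ref{thm_RdF_diag_F_Lp_smooth}. In short, the factorization decouples the rough multiplier (handled once, at $p=2r$, over full fractions) from the arithmetic of $\Rcal(q)$ (handled by interpolation, but only for a smooth bump).
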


These three main propositions directly imply Theorem \ref{thm_discrete_main_goal}. We now begin the proof of each, starting with the direct inequality.
 
 \subsection{Direct inequality (Step 1): Type II superorthogonality among denominators}
  Proof of the direct inequality  in Proposition \ref{prop_RdF_L{2r}_Sf} requires 
Type II superorthogonality in two steps. In Step 1, we apply Type II superorthogonality amongst tuples of functions 
\[f_{a_1/q_1}, f_{a_2/q_2}, \ldots, f_{a_{2r-1}/q_{2r-1}}, f_{a_{2r}/q_{2r}}\]
 in which the tuple of denominators  $(q_1,q_2,\ldots,q_{2r})$ satisfies the uniqueness property. In Step 2, a multilinear  direct inequality follows from Type II  superorthogonality amongst tuples in which the   numerators  in $(a_1/q_1,a_2/q_2,\ldots ,a_{2r}/q_{2r})$ satisfy the uniqueness property (and at most two denominators take any given value).

 For each $q \in Z$, define
\beq\label{F_q_dfn}
 F_q = \sum_{u \in \Rcal(q)} f_u.
 \eeq
The goal of Step 1 is to prove  the direct inequality
\beq\label{stage1}
  \| \sum_{q \in Z} F_q \|_{\ell^{2r}}  \leq C_r \|  ( \sum_{q \in Z} |F_q|^2)^{1/2} \|_{\ell^{2r}}.
\eeq
It suffices to verify Type II superorthogonality holds for the functions 
$\{F_q\}_{q \in Z}$.
We need only show that for any tuple $(q_1,q_2,\ldots, q_{2r-1},q_{2r})$  that has the uniqueness property,
 \[ \sum_{x \in \Z} F_{q_1}(x) \overline{F_{q_2}}(x) \cdots F_{q_{2r-1}}(x) \overline{F_{q_{2r}}}(x) =0.
 \]
In fact we can show a stronger property  holds term by term: for any   tuple of denominators $(q_1,\ldots, q_{2r})$  with the uniqueness property,  for each   tuple $(u_1, \ldots, u_{2r})$ with $u_i  = a_i/q_i \in \Rcal(q_i)$,
 \beq\label{RdF_L{2r}_orthog_f}
  \sum_{x \in \Z} f_{u_1} \overline{f_{u_2}}\cdots f_{u_{2r-1}} \overline{f_{u_{2r}}} (x) =0.
  \eeq
Upon defining
 \[G_{u_1, \ldots, u_{2r}}(\xi) = (f_{u_1} \overline{f_{u_2}} \cdots f_{u_{2r-1}} \overline{f_{u_{2r}}})\widehat{\;}(\xi),\] 
the identity  (\ref{RdF_L{2r}_orthog_f}) is the statement that  $G_{u_1, \ldots, u_{2r}}(0)=0,$ 
so that it suffices to show that the support of $G_{u_1, \ldots, u_{2r}}$ does not contain the origin.

We recall that $Z$ is a relatively prime set of integers contained in $ (1, q(Z)]$, and  $\ep < r^{-1}q(Z)^{-{2r}}$.
Let $\ep \Qbf$ denote the periodization of the scaled unit interval $\ep (-1/2,1/2]$, that is, 
\[\ep\Qbf = \Union_{\ell \in \Z}    (\ell-\ep/2,\ell+\ep/2].\]
For each $u$, the support of $(f_u)\widehat{\;}(\xi)$ is contained in $\ep \Qbf + u$ and the support of $(\overline{f_u})\widehat{\;}(\xi) = \overline{(f_u)\widehat{\;}(-\xi)}$ is contained in $\ep \Qbf -u$. The function $G_{u_1,\ldots,u_{2r}}(\xi) $ is a convolution of $2r$ such functions and  has support 
  contained in  
\beq\label{RdF_L{2r}_neighborhood}
 {2r}\ep \Qbf + u_1 - u_2 + \cdots+ u_{2r-1}  - u_{2r}.
 \eeq
 Recall that each $u_i = a_i/q_i \in \Rcal(q_i)$. 
 Under the uniqueness assumption, we may assume  without loss of generality that $q_1$ is distinct from $q_2,\ldots,  q_{2r}$. 
Let $a'/q'$ denote the (signed) reduced fraction such that $a_1/q_1  - a'/q'  = u_1 - (u_2 - u_3 + \cdots -u_{2r-1} + u_{2r}) \modd{1}$. Then  $q' \leq q(Z)^{2r-1}$, and since $Z$ is a relatively prime set, $(q_1,q')=1$; since $q_1>1$ this implies $q_1 \neq q'$. In particular, the reduced fractions $a_1/q_1$ and $a'/q'$ are distinct.

Now supposing that the  set (\ref{RdF_L{2r}_neighborhood}) does contain the origin, we would have
$
 |a_1/q_1  - a'/q' | \leq r\ep.
$
But since $a_1q'-a'q_1$ is a nonzero integer, this would imply that
\beq\label{RdF_L{2r}_contradiction}
 \frac{1}{q_1q'} \leq \left| \frac{a_1q' - a'q_1}{q_1q'} \right| =  \left|\frac{a_1}{q_1}  - \frac{ a'}{q'} \right| \leq r\ep
 \eeq
and hence $r \ep> q(Z)^{-2r} $, which contradicts our assumption that
 $\ep < r^{-1}q(Z)^{-{2r}}$.
We conclude that $0$ does not lie in the support of $G_{u_1, \ldots,  u_{2r}}$. 
This verifies the superorthogonality property, and completes the proof of the direct inequality (\ref{stage1}) in Step 1.

\begin{remark}\label{remark_aq_unique}
Here we can see that we cannot verify (\ref{f_sum_intro})  if we merely assume the tuple of rationals $(a_1/q_1,\ldots, a_{2r}/q_{2r})$ has the uniqueness property. Indeed, if $q_i=q$ for all $i=1,\ldots, {2r}$ but  $a_1=a_2-a_3 + \cdots -a_{2r-1}+a_{2r}$ with $a_1 \notin\{a_2,a_3,\ldots, a_{2r-1},a_{2r}\}$, then (\ref{RdF_L{2r}_neighborhood}) could   contain the origin. Compare this to   Step 2 below, in which we use an $r$-linear formulation to ensure that no more than two denominators can share the same value.
\end{remark}
 
The direct inequality (\ref{stage1}) in terms of  the functions $\{F_q\}$ is not yet sufficient for the purposes of proving Theorem \ref{thm_discrete_main_goal}, since our converse inequality in Theorem \ref{thm_RdF_LP} does not apply directly to operators such as $F_q$.  (This is because there is no single multiplier $M(\xi)$ such that for every $q$, $F_q$ can be defined according to a multiplier that is a shift of $M(\xi)$. We can for example see this  from the basic observation  that   as $q$ varies, the number of summands in $F_q$ varies.) 

Thus we proceed with a second step: we expand the right-hand side of (\ref{stage1}) and apply the non-concentration inequality of Lemma \ref{lemma_noncon}. This yields
\beq \label{RdF_L{2r}_dist_q}
\| (\sum_{q \in Z} |F_q|^2)^{1/2} \|_{\ell^{2r}(\Z)}^{2r} = 
	\sum_x (\sum_{q \in Z}  |F_q|^2)^r
\leq C_r
\sum_x\sum_{q \in Z}  |F_q|^{2r} +C_r\sum_x \sideset{}{^\sharp}\sum_{(q_1,\ldots, q_r )\in Z^r} |F_{q_1}|^2 \cdots |F_{q_r}|^2,
\eeq
in which  $ \sum^\sharp$ indicates that the sum restricts to those ordered tuples $(q_1,\ldots, q_r)$ with all pairwise distinct entries.
The first ``diagonal'' term we recognize as the second term on the right-hand side in Proposition   \ref{prop_RdF_L{2r}_Sf}.
The second ``off-diagonal'' term we will treat further, by applying a multilinear direct inequality for functions with Type II superorthogonality.

\subsection{A multilinear direct inequality via Type II superorthogonality}

We now show  that   Type II superorthogonality implies an $r$-multilinear direct inequality in $L^{2r}$.
To work in full generality, we let 
 $\Ucal$ be a finite index set, and for each of $j=1,\ldots ,r$ we suppose we are given a set $\{ g_u^{(j)} \}_{u \in \Ucal}$   of functions in $L^{2r}$; as usual in such formal arguments we refer to  $L^{2r}(X,d\mu)$, which we could take for example to be $X=\R$ or $(-1/2,1/2]$ with Lebesgue measure, or $\Z$ with counting measure.  
 
  \begin{prop}[Multilinear direct inequality]\label{prop_maxlp_unique_rlinear}

For every integer $r\geq 1$ there exists a constant $C_r$ such that the following holds.
For each $1 \leq j \leq r$, let  $\{g_u^{(j)}\}_{u \in \Ucal}$ be a  sequence of functions in $L^{2r}.$ 
Suppose that for every $2r$-tuple of indices 
$(u_1, u_2, \ldots, u_{2r}) \in  \Ucal^{2r} $
that has the uniqueness property,  
\[
\int  g_{u_1}^{(1)} \overline{g}^{(1)}_{u_2} \cdots g_{u_{2r-1}}^{(r)} \overline{g}_{u_{2r}}^{(r)}   = 0  .
\]
Then
\[
   \| \prod_{j=1}^r (\sum_{u \in  \Ucal} g^{(j)}_u)^{1/r}  \|_{L^{2r}}  \leq C_r  \| \prod_{j=1}^r  \left( (\sum_{u \in  \Ucal} |g^{(j)}_u|^2)^{1/2}  \right)^{1/r}\|_{L^{2r}}.\]
\end{prop}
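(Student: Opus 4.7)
The plan is to mirror the argument used to prove the direct inequality under Type II superorthogonality in Section 3.1, adapting the combinatorial partition step so that it respects the multilinear pair structure. First, I raise the desired inequality to the $2r$-th power. Since $|F_j^{1/r}|^{2r} = |F_j|^2$ for any complex-valued function $F_j$, the left-hand side becomes
\[ \int \prod_{j=1}^r \left|\sum_{u \in \Ucal} g_u^{(j)}\right|^2 = \sum_{(u_1, v_1, \ldots, u_r, v_r) \in \Ucal^{2r}} \int \prod_{j=1}^r g_{u_j}^{(j)} \overline{g_{v_j}^{(j)}}, \]
while the right-hand side becomes $\sum_{(w_1, \ldots, w_r) \in \Ucal^r} \int \prod_{j=1}^r |g_{w_j}^{(j)}|^2$. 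The Type II hypothesis forces the surviving terms on the left to be precisely those $2r$-tuples in which every index value appears at least twice.

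Next, for each surviving tuple I want to find a partition of the pair-positions into a red $r$-tuple $(\alpha_1, \ldots, \alpha_r)$ and a blue $r$-tuple $(\beta_1, \ldots, \beta_r)$, with $\{\alpha_j, \beta_j\} = \{u_j, v_j\}$ for every $j$, such that the \emph{sets} $\{\alpha_1, \ldots, \alpha_r\}$ and $\{\beta_1, \ldots, \beta_r\}$ both coincide with $A := \{u_1, v_1, \ldots, u_r, v_r\}$. This is the essential combinatorial step, and it is what I expect to be the main obstacle, since unlike in Section 3.1 the colors are not freely chosen but are constrained to come one-from-each-pair. I would prove this by modeling the pairs as edges in a multigraph with vertex set $A$ (with self-loops when $u_j = v_j$): because every value appears at least twice, every vertex has degree at least $2$, and a decomposition into (possibly degenerate) cycles lets each edge be oriented so that every vertex gains both an in-edge and an out-edge; the orientation encodes the partition.

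Once such a partition is in hand, H\"older's inequality together with the elementary bound $2\prod_j a_j b_j \leq \prod_j a_j^2 + \prod_j b_j^2$, applied with $a_j = |g_{\alpha_j}^{(j)}|$ and $b_j = |g_{\beta_j}^{(j)}|$ and using $\prod_j |g_{u_j}^{(j)}||g_{v_j}^{(j)}| = \prod_j |g_{\alpha_j}^{(j)}||g_{\beta_j}^{(j)}|$, yields
\[ \left|\int \prod_{j=1}^r g_{u_j}^{(j)} \overline{g_{v_j}^{(j)}}\right| \leq \tfrac{1}{2}\int \prod_{j=1}^r |g_{\alpha_j}^{(j)}|^2 + \tfrac{1}{2}\int \prod_{j=1}^r |g_{\beta_j}^{(j)}|^2. \]
Each term on the right is one of the summands that appear in the expansion of the right-hand side of the desired inequality.

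Finally, to sum efficiently I note that for any fixed $(w_1, \ldots, w_r) \in \Ucal^r$, the number of (tuple, partition) pairs whose red $r$-tuple equals $(w_1, \ldots, w_r)$ is bounded by a constant depending only on $r$: indeed $A$ is determined as the set of values appearing in $(w_1, \ldots, w_r)$, the blue $r$-tuple must take values in $A$ (so there are at most $r^r$ choices), and the original tuple together with its partition is recovered from the pairs $(\alpha_j, \beta_j)$. Summing then gives the desired multilinear direct inequality with a constant $C_r$ depending only on $r$.
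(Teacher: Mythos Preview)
Your approach is essentially the same as the paper's: expand both sides, discard tuples with the uniqueness property via the hypothesis, then for each surviving $2r$-tuple find a ``one-from-each-pair'' partition into two $r$-tuples each covering the value set $A$, apply $2ab\leq a^2+b^2$, and count. The paper formalizes the partition step as a separate sorting lemma and proves it via a \emph{bipartite} graph (positions $\{1,\ldots,r\}$ on one side, values $A$ on the other, with two edges from each position $j$ to $u_j$ and to $v_j$); your non-bipartite multigraph is exactly the contraction of that bipartite graph along the position vertices, and orienting your edges corresponds to $2$-coloring theirs.

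There is one small gap in your version of the combinatorial step. Your multigraph need not decompose into edge-disjoint cycles: an edge-decomposition into cycles exists if and only if every degree is even, whereas the no-uniqueness hypothesis only gives degrees $\geq 2$. For instance, the pairs $(a,b),(a,b),(a,c),(a,c),(b,c)$ yield degrees $4,3,3$ and no cycle decomposition. The paper sidesteps this by first reducing to the case $|A|=r$, where every value appears exactly twice and the graph is $2$-regular, hence genuinely a disjoint union of cycles; this reduction (replace a value of multiplicity $\geq 4$, or two values each of odd multiplicity $\geq 3$, by introducing a fresh symbol) is short and you should include it. Alternatively, you can repair the orientation claim directly without the reduction: pair up the odd-degree vertices with phantom edges, orient along an Eulerian circuit, then delete the phantoms; since an odd degree is $\geq 3$, each vertex still retains in- and out-degree $\geq 1$.

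One further minor point: in your final count, knowing the ordered pairs $(\alpha_j,\beta_j)$ determines the unordered pair $\{u_j,v_j\}$ but not its order, so the original tuple and partition are recovered only up to a factor of $2^r$. This is harmless for the constant $C_r$ but is not quite ``recovered'' as stated.
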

 
In the proof, it will be useful to have a notation for the tuple $(u_1,u_2,\ldots, u_{2r})$ that makes it more visible which of these indices are applied to the $j$-th collection of functions $g_u^{(j)}$, for $j=1,\ldots, r$. 
Thus we will now denote any such tuple with the notation 
\[(u_1(0), u_1(1), u_2(0), u_2(1), \ldots, u_r(0), u_r(1)).\] 
We will again use the convention that a tuple is an ordered sequence of entries, while the set 
$\{u_1(0), u_1(1), u_2(0), u_2(1), \ldots, u_r(0), u_r(1)\}$
 denotes the unordered set of distinct elements appearing in the tuple. 
 
We require a sorting lemma based on the uniqueness property.
\begin{lemma}\label{lemma_Hall_marriage}
Let $r \geq 1$ be a fixed  integer, and let $(u_1(0), u_1(1), \ldots, u_r(0), u_r(1))$ be a $2r$-tuple of integers that does not have the uniqueness property. Then there exists a function $\kappa : \{1, \ldots, r\} \mapsto \{0,1\}$ so that as sets, 
\begin{align*}
\{ u_1(0), u_1(1), \ldots, u_r(0), u_r(1) \} &= \{ u_1(\kappa(1)),  \ldots, u_r(\kappa(r))\}\\
& = \{u_1(1-\kappa(1)),  \ldots, u_r(1-\kappa(r))\}.
\end{align*}
\end{lemma}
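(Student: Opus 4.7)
The plan is to recast the conclusion as an orientation problem on an auxiliary multigraph. Let $A$ denote the set of distinct values $\{u_i(b):1\leq i\leq r,\;b\in\{0,1\}\}$. I would build a multigraph $H$ with vertex set $A$ and one edge per pair index: for each $i$ with $u_i(0)\neq u_i(1)$, a non-loop edge between $u_i(0)$ and $u_i(1)$; for each $i$ with $u_i(0)=u_i(1)$, a self-loop at $u_i(0)$. Counting self-loops twice, $\deg_H(v)$ equals the number of appearances of $v$ in the $2r$-tuple, so the hypothesis that the tuple lacks the uniqueness property is exactly the statement $\deg_H(v)\geq 2$ for every $v\in A$. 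A choice of $\kappa(i)$ on a non-loop edge amounts to an orientation of that edge, by the rule that the edge indexed $i$ points from $u_i(1-\kappa(i))$ to $u_i(\kappa(i))$; on a self-loop the choice is immaterial, since the common value $u_i(0)=u_i(1)$ lies on both sides automatically. Under this dictionary the conclusion of the lemma becomes the statement that $H$ admits an orientation of its non-loop edges in which every vertex $v$ not incident to a self-loop has both positive in-degree and positive out-degree.

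I would produce the orientation one connected component of the non-loop graph at a time. Write $A^{\ast}\subseteq A$ for the vertices carrying no self-loop; each such vertex has non-loop degree at least $2$, so any component meeting $A^{\ast}$ has at least two vertices. In each such component, pair up the odd-degree vertices, adjoin an auxiliary edge for each pair to obtain an Eulerian super-multigraph, orient all of its edges by traversing an Eulerian circuit, and then delete the auxiliary edges. A vertex $v$ of even non-loop degree is incident to no auxiliary edge, and the Eulerian orientation gives in-degree equal to out-degree equal to $\tfrac{1}{2}\deg_H(v)$, which is at least $1$ when $v\in A^{\ast}$. A vertex $v$ of odd non-loop degree belongs to $A^{\ast}$ only if $\deg_H(v)\geq 3$, and is incident to exactly one auxiliary edge, so after deletion its in- and out-degrees are of the form $\tfrac{1}{2}(\deg_H(v)\pm 1)$, both of which are at least $1$.

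The step requiring care is the bookkeeping at odd-degree vertices of $A^{\ast}$, where deleting the auxiliary matching edge could in principle zero out either the in-degree or the out-degree; the observation that each odd-degree vertex is incident to exactly one auxiliary edge, combined with the lower bound $\deg_H(v)\geq 3$ forced by $v\in A^{\ast}$, is precisely what rescues the argument. Reading $\kappa(i)$ off the orientation on non-loop edges, and choosing $\kappa(i)$ arbitrarily on self-loops, then yields a function with the required property.
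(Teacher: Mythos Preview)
Your proof is correct and takes a genuinely different route from the paper's. The paper first reduces to the extremal case $|A|=r$ (each value appearing exactly twice) by an inductive modification scheme: if $|A|=s-1<r$, some value appears at least four times or two values each appear at least three times, and one replaces a few occurrences by a fresh symbol $a_s\notin A$ to push the cardinality up to $s$, then pulls the function $\kappa$ back. In the base case $|A|=r$ the paper builds a \emph{bipartite} graph between the index set $\{1,\ldots,r\}$ and the value set $A$, with an edge from $i$ to $u_i(0)$ and an edge from $i$ to $u_i(1)$; every vertex then has degree exactly $2$, so the graph is a disjoint union of even cycles, and an alternating red/blue $2$-colouring of each cycle furnishes $\kappa$.

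Your construction instead places the graph on the value set $A$ alone (one edge per pair-index), recasts the problem as finding an orientation with no source or sink in $A^{\ast}$, and handles all cardinalities $|A|\leq r$ in one stroke via the Eulerian-circuit trick with auxiliary matching edges. The trade-off is that you invoke a slightly heavier tool (existence of Eulerian circuits) and must do the small case analysis at odd-degree vertices of $A^{\ast}$, whereas the paper's argument in the reduced case is completely elementary; in exchange you avoid the induction on $|A|$ altogether. The two pictures agree when $|A|=r$: your multigraph $H$ is then $2$-regular, hence a union of cycles, and your Eulerian orientation coincides with the paper's alternating colouring. One cosmetic omission: components of the non-loop graph not meeting $A^{\ast}$ still carry non-loop edges needing a $\kappa$-value, but any orientation there is acceptable since every such vertex already appears via a self-loop.
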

Let us defer the proof of this momentarily, and see why it suffices for proving the multilinear direct inequality.
We raise both sides of the claimed inequality to the $2r$-th power; then the left-hand side may be expanded as 
\beq\label{LHS}
 \sum_{(u_1(0),u_1(1), \ldots, u_r(0),u_r(1)) \in  \Ucal^{2r}}  \int  \prod_{j=1}^r (g_{u_j(0)}^{(j)}  \overline{g}_{u_j(1)}^{(j)}) .
 \eeq
For any tuple $(u_1(0),u_1(1), \ldots, u_r(0),u_r(1))$ with the uniqueness property, the corresponding integral vanishes, by the assumed superorthogonality.
Define for any subset $A \subseteq \Ucal$ the function
\[ 
S_A(x) = \sum_{\bstack{(u_1(0),u_1(1), \ldots, u_r(0),u_r(1))}{\{u_1(0),u_1(1), \ldots, u_r(0),u_r(1) \} = A}} g_{u_1(0)}^{(1)}(x)  \overline{g}_{u_1(1)}^{(1)}(x)\cdots g_{u_r(0)}^{(r)}(x)  \overline{g}_{u_r(1)}^{(r)} (x)
.
\]
Thus the left-hand side contribution (\ref{LHS}) is   equal to 
\beq\label{maxlp_Z_sum}
 \sum_{  |A| \leq r} 
 	\int  S_A  ,
\eeq
in which we   need only consider   $|A|\leq r$ since any $2r$-tuple without the uniqueness property contains at most $r$ distinct values. 

 On the other hand, for any set $A \subseteq \Ucal$ with $|A| \leq r$, define the function
 \[ T_A(x) =  \sum_{\bstack{(u_1, \ldots, u_r) }{\{u_1 , \ldots, u_r \} = A}} |g_{u_1}^{(1)}(x) |^2  \cdots |g_{u_r}^{(r)}(x)|^2 .\]
 The multilinear direct inequality will be proved if we can verify that 
 \[ \sum_{|A| \leq r} \int   S_A    \leq C_r^{2r}  \sum_{|A| \leq r} \int   T_A   .\]
Note that once a fixed subset $|A| \leq r$ is chosen, there are at most $d_r$ $2r$-tuples such that the set $ \{u_1(0),u_1(1), \ldots, u_r(0),u_r(1)\}$ is equal to $A$, for some combinatorial constant $d_r$. Thus the inequality above will hold (with $C_r^{2r}  =d_r$) if we can show that 
for each set $A$ with $|A| \leq r$, for each tuple with set $ \{u_1(0),u_1(1), \ldots, u_r(0),u_r(1)\}=A$,
\[ \int  |g_{u_1(0)}^{(1)}  \overline{g}_{u_1(1)}^{(1)}\cdots g_{u_r(0)}^{(r)} \overline{g}_{u_r(1)}^{(r)}|   \leq   \int  T_A   .\]
We apply Lemma \ref{lemma_Hall_marriage}.
to rewrite the left-hand side as 
\begin{multline*}
 \int |g_{u_1(\kappa(1))}^{(1)}\cdots g_{u_r(\kappa(r))}^{(r)} | \cdot |g_{u_1(1-\kappa(1))}^{(1)}\cdots g_{u_r(1-\kappa(r))}^{(r)} | 
\\
 \leq \frac{1}{2} \int |g_{u_1(\kappa(1))}^{(1)}\cdots g_{u_r(\kappa(r))}^{(r)} |^2   
 	+ \frac{1}{2} \int |g_{u_1(1-\kappa(1))}^{(1)}\cdots g_{u_r(1-\kappa(r))}^{(r)} |^2  .
 \end{multline*}
 Here we also used the fact that  $AB \leq (1/2)(A^2 + B^2)$ for $A,B$ non-negative real numbers.
By the lemma, each of the tuples $(u_1(\kappa(1)), \ldots, u_r(\kappa(r)))$ and $(u_1(1-\kappa(1)), \ldots, u_r(1-\kappa(r)))$ 
is a term represented in the sum defining $T_A(x)$, and thus in particular the right-hand side is bounded above by $\int T_A  $, as desired.

We now return to the proof of the sorting property in Lemma \ref{lemma_Hall_marriage}, using an argument appearing in \cite[Lemma 2.22]{MSZK18x}. 
We will denote the set $\{ u_1(0), u_1(1), \ldots, u_r(0), u_r(1) \}$ by $A$; since the tuple does not have the uniqueness property, we know that $|A| \leq r$. Let us first see that we need only prove the lemma in the case $|A|=r$. In fact, if for some $s \leq r$  the lemma holds for all sets of cardinality $s$, then the lemma is also proved for all sets with cardinality $s-1$. For suppose that the set of values appearing in the tuple is $A=\{a_1,\ldots, a_{s-1}\}$, with $s-1<r$. Then in the $2r$-tuple $(u_1(0), u_1(1), \ldots, u_r(0), u_r(1))$, one value (say $a_i$) must appear at least four times, or two distinct values (say $a_i$ and $a_j$) must each appear at least three times. We construct a new $2r$-tuple in the first case by changing two occurrences of $a_i$ to a new value $a_s \not\in A$, and in the second case by changing one occurrence of $a_i$ to $a_s$ and one occurrence of $a_j$ to $a_s$. This new tuple $(u_1(0)', u_1(1)', \ldots, u_r(0)', u_r(1)')$ does not have the uniqueness property, and takes $s$ distinct values in the set $A' = A \union \{a_s\}$. The version of the lemma assumed to hold for cardinality $s$ sets now applies, and the map $\kappa$ it provides shows that 
$\{u_1(\kappa(1))',\ldots, u_1(\kappa(1))'\}= A'$ and $\{u_1(1-\kappa(1))',\ldots, u_1(1-\kappa(1))'\} = A'$.
As a consequence, we deduce that
$\{u_1(\kappa(1)),\ldots, u_1(\kappa(1))\} = A$ and $\{u_1(1-\kappa(1)),\ldots, u_1(1-\kappa(1))\} = A$, as desired. 

Now we prove the lemma in the case $|A|=r$, so that each value in $A$ is taken by precisely two elements in the tuple. We can construct a bipartite graph as follows. One set of vertices represents the set of indices $\{1,\ldots, r\}$ and the other set of vertices represents the set of values $\{a_1,\ldots, a_r\}$. We will connect a vertex $i$ and a vertex $a_i$ with an edge if $u_i(0) =a_j$, and with another edge  if $u_i(1) = a_j$. In particular, every vertex in this finite bipartite graph is associated to precisely 2 edges. It follows that the graph is a union of finite cycles, each with an even number of edges. 
In each such cycle, we color the edges red and blue, alternately. In particular, each vertex corresponding to an index $i \in \{1,\ldots, r\}$ has a red edge and a blue edge. We will define $\kappa(i)$ to be the value in $\{0,1\}$ such that the edge between the vertex representing the index $i$ and the vertex representing the value $u_i(\kappa(i))$ is red. Since each vertex corresponding to a value $a_i$ has a red edge and a blue edge, this map has the desired property, and the lemma is proved.

This completes the verification of the lemma, and hence of the multilinear direct inequality.

\subsection{Direct inequality (Step 2): Type II superorthogonality among numerators}
We now apply the multilinear direct inequality to the setting of our functions $\{f_{a/q}\}$.
For any integer $q \in Z$, define the square function  
  \[S_q(f) = (\sum_{a/q \in \Rcal(q)} |f_{a/q}|^2)^{1/2}.\]

  \begin{lemma}[Multilinear direct inequality]\label{lemma_RdF_L4_orthog_numer'}
Let $(q_1,\ldots, q_r)$ be a tuple of distinct integers that are all pairwise relatively prime. Then as long as $\ep< r^{-1}\max\{q_1,\ldots,q_r\}^{-2r}$,
\[ 
\| (F_{q_1})^{1/r} \cdots ( F_{q_r})^{1/r} \|_{\ell^{2r}(\Z)} \leq C_r \| (S_{q_1}(f))^{1/r}\cdots (S_{q_r}(f))^{1/r} \|_{\ell^{2r}(\Z)}.
\]
\end{lemma}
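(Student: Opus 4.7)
The plan is to invoke the multilinear direct inequality of Proposition \ref{prop_maxlp_unique_rlinear} with $r$ carefully chosen sequences, and thereby reduce the lemma to verifying Type II superorthogonality. Specifically, let $\Ucal = \bigcup_{j=1}^r \Rcal(q_j)$, a disjoint union by the reasoning of Lemma \ref{lemma_sep}; for each $1 \leq j \leq r$ define
\[ g_u^{(j)} = f_u \text{ if } u \in \Rcal(q_j), \qquad g_u^{(j)} = 0 \text{ otherwise}. \]
Then $\sum_{u \in \Ucal} g_u^{(j)} = F_{q_j}$ and $\sum_{u \in \Ucal} |g_u^{(j)}|^2 = S_{q_j}(f)^2$, so Proposition \ref{prop_maxlp_unique_rlinear} applied to these sequences yields precisely the claim of the lemma---provided that Type II superorthogonality holds across the $r$ sequences.

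To verify superorthogonality, fix a $2r$-tuple $(u_1, \ldots, u_{2r}) \in \Ucal^{2r}$ with the uniqueness property. Tuples in which some $u_{2j-1}$ or $u_{2j}$ lies outside $\Rcal(q_j)$ contribute zero identically, so I need only consider tuples with $u_{2j-1}, u_{2j} \in \Rcal(q_j)$ for every $j$. For any such tuple, the Fourier-support computation from Step 1 of the proof of Proposition \ref{prop_RdF_L{2r}_Sf} applies verbatim: the integral of interest equals $G_{u_1,\ldots,u_{2r}}(0)$, where $G_{u_1,\ldots,u_{2r}}$ is supported in $2r\ep \Qbf + \sum_{j=1}^r (u_{2j-1} - u_{2j})$. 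Thus vanishing reduces to showing that the quantity $\sum_{j=1}^r (u_{2j-1} - u_{2j})$ has distance greater than $r\ep$ from $\Z$.

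Writing $u_{2j-1} - u_{2j} = n_j/q_j$ with $n_j \in \Z$ and $|n_j| < q_j$, the uniqueness property guarantees that at least one pair---the one containing the uniquely appearing entry---has distinct members, so some $n_{j_0} \neq 0$. The plan is then to exploit the pairwise coprimality of the $q_j$'s: reducing $\sum_j n_j/q_j$ to a common denominator $Q = q_1 \cdots q_r$ and localizing modulo $q_{j_0}$, every term with $j \neq j_0$ vanishes, and the surviving term $n_{j_0}(Q/q_{j_0})$ is nonzero modulo $q_{j_0}$ because $\gcd(Q/q_{j_0}, q_{j_0}) = 1$ and $0 < |n_{j_0}| < q_{j_0}$. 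Hence the sum is a nonzero rational modulo $1$ whose reduced denominator divides $Q \leq \max\{q_1,\ldots,q_r\}^r$, so its distance to $\Z$ is at least $\max\{q_1,\ldots,q_r\}^{-r}$, which strictly exceeds $r\ep$ under the hypothesis $\ep < r^{-1}\max\{q_1,\ldots,q_r\}^{-2r}$. I expect this Diophantine step to be the main obstacle, since it is exactly the place where pairwise coprimality must be used essentially, to rule out cancellation between pair contributions sitting at different denominators.
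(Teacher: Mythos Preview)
Your proposal is correct and follows essentially the same approach as the paper: both set up the same sequences $g_u^{(j)} = \onebf_{\Rcal(q_j)}(u) f_u$ over the disjoint union $\Ucal = \bigcup_j \Rcal(q_j)$, invoke Proposition~\ref{prop_maxlp_unique_rlinear}, and reduce to a Diophantine separation of $\sum_j (u_{2j-1} - u_{2j})$ from $\Z$ using pairwise coprimality. Your common-denominator reduction modulo $q_{j_0}$ is a slight rephrasing of the paper's argument (which instead isolates $a_0/q_{j_0}$ and compares it to the reduced fraction $a'/q'$ of the remaining sum, using $(q_{j_0},q')=1$), and in fact yields a marginally sharper bound $\max\{q_j\}^{-r}$ for the distance, but the substance is the same.
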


This will follow immediately from  the general inequality in Proposition \ref{prop_maxlp_unique_rlinear}  after we set some notation and verify the appropriate superorthogonality condition.
Define the set $Z' = \{q_1,\ldots, q_r\}$. 
For each $i =1,\ldots, r$, and for each  $u \in \Rcal(Z')$, define 
 \[
 g^{(i)}_u (x) = \onebf_{\Rcal(q_i)}(u)f_{u}(x),
 \]
 so that it detects whether the denominator is $q_i$.
 Then 
\[ 
\| (F_{q_1})^{1/r} \cdots ( F_{q_r})^{1/r} \|^{2r}_{\ell^{2r}(\Z)}
= \sum_{x\in \Z}  |F_{q_1}|^2 \cdots |F_{q_r}|^2 =  \sum_{x\in \Z}  |\sum_{u_1 \in \Rcal(Z')} g^{(1)}_{u_1}(x)|^2 \cdots |\sum_{u_r\in \Rcal(Z')} g^{(r)}_{u_r}(x)|^2.
\]
Thus Proposition \ref{prop_maxlp_unique_rlinear}  provides the inequality claimed in our lemma, as long as 
we can verify that for every tuple $(u_1(0),u_1(1),\ldots, u_{r}(0),u_r(1))$ of elements in $\Rcal(Z')$ with the uniqueness property,
\[ \sum_{x \in \Z} g^{(1)}_{u_1(0)} \overline{g}^{(1)}_{u_1(1)} \cdots  g^{(r)}_{u_{r}(0)} \overline{g}^{(r)}_{u_r(1)}(x) =0.\]
Arguing as in Step 1, this holds as long as the origin is not contained in the set 
\beq\label{RdF_L4_set_v2}
 2r\ep \Qbf + u_1(0) - u_1(1) + \cdots + u_{r}(0) -u_{r}(1).
 \eeq

Recall   that for each $i=1,\ldots, r$, $q_i$ is the denominator of $u_i(0)$ and of $u_i(1)$. 
Without loss of generality, assume  that $u_1(0)$ is distinct from all the others. If we denote $u_1(0)= a_1/q_1$ and $u_1(1) = a_2/q_1$, in particular this means that  $a_0/q_1:=a_1/q_1-a_2/q_1 \neq 0$.
Let $a'/q'$ be the (signed) reduced fraction such that 
$a_0/q_1 - a'/q' = u_1(0) - u_1(1) + (u_2(0) - u_2(1) + \cdots + u_{r}(0) -u_{r}(1)) \modd{1}$.
Note that $q'  \leq q(Z)^{2r-1}$ and $(q_1,q')=1$.  
Thus,  $a_0/q_1$ is distinct from the reduced fraction $a'/q'$,  and $a_0q' - a'q_1$ is a nonzero integer.
Supposing that (\ref{RdF_L4_set_v2}) does contain the origin,  then we would have
\[ \frac{1}{q_1 q'} \leq \left| \frac{a_0q' - a'q_1}{q_1q'} \right| = \left| \frac{a_0}{q_1}  - \frac{a'}{q'} \right| \leq r\ep.\]
This cannot occur if we have chosen $\ep < r^{-1}\max \{q_1,\ldots,q_r\}^{-2r}$. Thus the superorthogonality property holds, and this concludes the proof of Lemma \ref{lemma_RdF_L4_orthog_numer'}.

\begin{remark}\label{remark_two_q}
Note that here it was critical that at most two of the rationals $u_i(j)$ shared the same denominator; pairwise distinct  fractions $a_1/q, \ldots ,a_j/q$ with $j \geq 3$ could have signed sum $a_1/q - a_2/q + \cdots (-1)^{j+1} a_j/q$ equal to zero, in which case the above argument would fail. This is why the application of the non-concentration inequality, and the $r$-linear formulation, is required.
\end{remark}
  
  Now we may complete the proof of the direct inequality in Proposition  \ref{prop_RdF_L{2r}_Sf}.
  Recall  from our application of the non-concentration inequality  in (\ref{RdF_L{2r}_dist_q}) that
\[  \| (\sum_{q \in Z} |F_q|^2)^{1/2} \|_{\ell^{2r}(\Z)}^{2r} \leq C_r
\sum_x\sum_{q \in Z}  |F_q|^{2r} + C_r  \sideset{}{^\sharp}\sum_{(q_1,\ldots, q_r) \in Z^r}  \sum_x|F_{q_1}|^2 \cdots |F_{q_r}|^2.\]
Since $Z$ is a relatively prime set, we may apply Lemma \ref{lemma_RdF_L4_orthog_numer'} to each term in the restricted sum over $q_1,\ldots, q_r$, so that
\begin{align*}
 \sideset{}{^\sharp}\sum_{(q_1,\ldots, q_r )\in Z^r}  \sum_x|F_{q_1}|^2 \cdots |F_{q_r}|^2  
	&\leq  C_r  \sideset{}{^\sharp}\sum_{(q_1,\ldots, q_r) \in Z^r} \sum_x  S_{q_1}(f)(x)^2 \cdots S_{q_r}(f)(x)^2  \\
	& \leq C_r\sum_x ( \sum_{q  \in Z} \sum_{a/q \in \Rcal(q)} |f_{a/q}|^2)^r  \\
	& = C_r\| (\sum_{q  \in Z} \sum_{a/q \in \Rcal(q)} |f_{a/q}|^2)^{1/2} \|_{\ell^{2r}}^{2r} = C_r\| ( \sum_{a/q \in \Rcal(Z)} |f_{a/q}|^2)^{1/2} \|_{\ell^{2r}}^{2r}.
 \end{align*}
     We have proved Proposition \ref{prop_RdF_L{2r}_Sf}, the direct inequality in $\ell^{2r}$ for the functions $\{f_{a/q}\}$.

  \subsection{The first converse inequality}

We now turn to the first converse inequality of Theorem \ref{thm_RdF_LP}, which we have seen immediately implies Proposition  \ref{prop_converse_{2r}_RdF}. We first prove a version in which the multiplier is a $C^\infty$ function supported in $(-1/2,1/2]$, which we will call $\Psi(\xi)$, with corresponding operator denoted by $S_j$. As mentioned before Theorem \ref{thm_RdF_LP},   in order to define the discrete operator associated to $S_j$, we must first periodize the multiplier $\Psi(\xi- \xi_j)$ to $\Psi_{\mathrm{per}}(\xi-\xi_j)$. In order to simplify notation in the statement below, we denote both $S_j$ and its associated discrete operator by $S_j$.  
\begin{thm}\label{thm_RdF_Psi}
Let $\Psi$ be a $C^\infty$ function that is compactly supported in $(-1/2,1/2]$, and fix $0< \del<1$. Given a point $\xi_j \in (-1/2,1/2]$, let $S_j$ be the operator with multiplier $\Psi( \del^{-1}(\xi - \xi_j))$. If a set $\{ \xi_j \}$ of points in $(-1/2,1/2]$ is  $\del$-separated, then the corresponding discrete operator $S_j$ has the property that for each $2 \leq p \leq \infty$,  
\[ \| ( \sum_j |S_j f|^2)^{1/2} \|_{\ell^p(\Z)} \leq C_p \|f \|_{\ell^p(\Z)}\]
for a constant $C_p$ depending only on $\Psi$ and $p$, and independent of $\del$ or the $\del$-separated set $\{\xi_j\}$.
\end{thm}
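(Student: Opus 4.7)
The plan is to reduce this discrete $\ell^p$ square function estimate to its continuous analogue on $L^p(\R)$ and then transfer via a sampling argument. The case $p=2$ is handled directly: by Plancherel on the torus,
\[
\|(\sum_j |S_j f|^2)^{1/2}\|_{\ell^2(\Z)}^2 = \int_{-1/2}^{1/2} |\widehat{f}(\xi)|^2 \sum_j |\Psi_{\mathrm{per}}(\delta^{-1}(\xi - \xi_j))|^2 d\xi.
\]
Since $\Psi$ is supported in $(-1/2,1/2]$, each bump $\Psi(\delta^{-1}(\xi - \xi_j))$ is supported in an interval of length at most $\delta$ about $\xi_j$, and the $\delta$-separation of $\{\xi_j\}$ ensures that these shifted supports have bounded overlap. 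The inner sum is thus pointwise bounded by a constant $C_\Psi$ depending only on $\Psi$, and the estimate for $p=2$ follows.

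For $2 < p < \infty$, I will linearize the square function via Khintchine's inequality (\S\ref{sec_TI_Khin}) applied pointwise in $n \in \Z$: for each $n$,
\[
c_p (\sum_j |S_j f(n)|^2)^{p/2} \leq \int_0^1 |\sum_j r_j(t) S_j f(n)|^p dt \leq C_p (\sum_j |S_j f(n)|^2)^{p/2}.
\]
Summing in $n$ and applying Fubini, it suffices to bound, uniformly in $t \in [0,1]$, the discrete operator $T_t = \sum_j r_j(t) S_j$ whose Fourier multiplier is $m_t(\xi) = \sum_j r_j(t) \Psi(\delta^{-1}(\xi - \xi_j))$ on $\ell^p(\Z)$, with norm independent of $t$, $\delta$, and the $\delta$-separated set $\{\xi_j\}$.

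To bound $T_t$, I will first prove the corresponding estimate for the continuous operator $T_t^{\R}$ on $L^p(\R)$ with the same multiplier $m_t$, and then invoke the Magyar--Stein--Wainger sampling principle to descend to $\ell^p(\Z)$; sampling applies cleanly here because $\Psi(\delta^{-1}(\cdot - \xi_j))$ is supported in $\xi_j + \delta(-1/2,1/2] \subset (-1,1)$ for $\delta$ small, so periodization introduces no loss. For the endpoint $p = \infty$, a separate direct argument using Schwartz decay of $\check{\Psi}$ together with $\delta$-separation provides a pointwise bound on $\sum_j |S_j f(n)|^2$ by a constant multiple of $\|f\|_{\ell^\infty}^2$.

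The main obstacle is the uniform $L^p(\R)$ estimate for $T_t^{\R}$: the multiplier $m_t$ has derivatives scaling like $\delta^{-k}$, so the Mikhlin--H\"ormander condition fails and one cannot quote it directly. Instead the $\delta$-separation must be exploited either through the classical Rubio de Francia smooth-bump square function estimate (combined with reverse Khintchine on $\R$ to turn the randomized multiplier bound back into a vector-valued statement), or via vector-valued Calder\'on--Zygmund analysis of the convolution kernel $K_t^{\R}(x) = \delta \check{\Psi}(\delta x) \sum_j r_j(t) e^{2\pi i x \xi_j}$, using the rapid decay of $\check{\Psi}$ and the oscillation provided by the $\delta$-separated frequencies $\xi_j$ to obtain a $\delta$-independent operator norm.
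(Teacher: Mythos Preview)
Your $p=2$ argument is fine. The rest of the proposal has two issues worth flagging.

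First, your $p=\infty$ sketch does not work as stated. Schwartz decay of $\check{\Psi}$ alone gives $|S_j f(n)| \leq C_\Psi \|f\|_{\ell^\infty}$ uniformly in $j$, but there are $O(\delta^{-1})$ points $\xi_j$ in a $\delta$-separated subset of $(-1/2,1/2]$, so summing the squares gives a bound of order $\delta^{-1}\|f\|_{\ell^\infty}^2$, which is not uniform. The $\delta$-separation must be used to produce \emph{cancellation}, not merely to count terms. The paper does this at $p=\infty$ by a duality reduction: it suffices to show $\|\sum_j a_j S_j f\|_{\ell^\infty} \leq C$ for every $\ell^2$-normalized sequence $\{a_j\}$, which in turn follows from $\sum_n |K(n)| \leq C$ for the kernel $K(n) = \bigl(\sum_j a_j e^{2\pi i \xi_j n}\bigr) K_0(n)$ with $K_0(n) = \delta(\Fscr^{-1}\Psi)(\delta n)$. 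The key input is an exponential-sum lemma (Lemma~\ref{lemma_RdF_exp_sum}): for any interval $J$ of length $1/\delta$, $\sum_{n\in J}|\sum_j a_j e^{2\pi i \xi_j n}|^2 \leq c\delta^{-1}\sum_j|a_j|^2$. This is where $\delta$-separation enters, playing the role of Bessel's inequality for almost-orthogonal exponentials; combined with the decay of $K_0$ and Cauchy--Schwarz on dyadic blocks of length $1/\delta$, it gives the $\ell^1$ kernel bound.

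Second, once you have $p=2$ and $p=\infty$, the full range $2\leq p\leq\infty$ follows immediately by interpolation of the vector-valued operator $f\mapsto (S_j f)_j$ from $\ell^p$ to $\ell^p(\ell^2)$. Your detour through Khintchine linearization, MSW sampling, and the continuous Rubio de Francia square function is unnecessary, and somewhat circular: the continuous Rubio de Francia estimate you would invoke is of comparable depth to the theorem you are proving, and is itself typically proved by the same $p=2$/$p=\infty$ interpolation with the same exponential-sum input. The paper's route avoids this by working directly on $\Z$ throughout.
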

We can deduce from this the result for a general $L^p$ multiplier via the Marcinkiewicz-Zygmund inequality, which we proved in Theorem \ref{thm_MarZyg_p} as a consequence of Khintchine's inequality. 
Let us see how this deduction goes. In Theorem \ref{thm_RdF_LP}, $0<\del<1$ is fixed and the given $L^p$ multiplier $m(\xi)$ is supported in $|\xi| \leq c_0 \del$ with $c_0 < 1/2$. We  choose a $C^\infty$ function $\Psi$ supported in $(-1/2,1/2]$ such that $\Psi(\xi) = 1$ for $|\xi| \leq c_0$,   so that $\Psi(\del^{-1}(\xi - \xi_j))=1$ on the support of $m(\xi - \xi_j)$, and we define the operator $S_j$ accordingly with multiplier $\Psi(\del^{-1}(\xi - \xi_j))$. Then $T_j = T_jS_j$ as operators acting on functions of $\R$. Similarly, after periodizing each kernel, we obtain $T_j = T_jS_j$ for the corresponding discrete operators  acting on functions of $\Z$. 
By the variant of the  Marcinkiewicz-Zygmund inequality in part (II) of Theorem \ref{thm_MarZyg_p}, 
  for any sequence of functions   $F_j \in \ell^p(\Z)$,  
\beq\label{RdF_TF_j_MZ}
\| ( \sum_j |T_j(F_j)|^2)^{1/2} \|_{\ell^p} \leq C_pA_p \| ( \sum_j |F_j|^2 )^{1/2} \|_{\ell^p} 
\eeq
for a constant $C_p$.  
In particular, given a function $f$ of $\Z$, set $F_ j  = S_j(f)$, and apply this inequality to $T_j (F_j) = T_jS_j(f)  = T_j(f)$ to obtain
\[ \| ( \sum_j |T_j(f)|^2)^{1/2} \|_{\ell^p} \leq C_pA_p  \| (\sum_j |S_j(f)|^2)^{1/2} \|_{\ell^p}.\]
Then Theorem \ref{thm_RdF_LP} follows from invoking Theorem \ref{thm_RdF_Psi}, in which the resulting constant depends on the choice of $\Psi$, and hence on $c_0$.

It remains to prove 
Theorem  \ref{thm_RdF_Psi}. It claims that an operator maps $\ell^{p}(\Z)$ to $\ell^{p}(\Z; \ell^2(j \in \Z))$, and by interpolation it suffices to prove  it for $p=2$ and $p=\infty$.
The case $p=2$ holds by the Parseval-Plancherel theorem: for each $j$,
\[ \| S_j(f) \|_{\ell^2(\Z)}^2=\| (\Psi_{\mathrm{per}}(\del^{-1}(\xi - \xi_j)) \widehat{f}(\xi))\check{\;} \|_{\ell^2(\Z)}^2 = \int_{(-1/2,1/2]} | \Psi(\del^{-1}(\xi - \xi_j))|^2 |\widehat{f}(\xi)|^2 d\xi.\]
Thus 
\[ \| (\sum_j |S_j(f)|^2)^{1/2}\|_{\ell^2(\Z)}^2 = \int_{(-1/2,1/2]} \sum_j |\Psi(\del^{-1}(\xi - \xi_j))|^2 |\widehat{f}(\xi)|^2 d\xi \leq \| \Psi\|_{L^\infty}^2 \int_{(-1/2,1/2]} |\widehat{f}(\xi)|^2 d\xi,\]
since at most one summand $\Psi(\del^{-1}(\xi - \xi_j))$ is non-zero for each $\xi$, in view of the  $\del$-separation  of the set $\{\xi_j\}$. 
By applying Parseval-Plancherel again, we see the right-most side is $\| \Psi\|_{L^\infty}^2\|f\|_{\ell^2(\Z)}^2$, verifying the case $p=2$.

To establish the case $p=\infty$, we use the following general observation, an application of duality. 
Let $\{F_j\}$ be a set of functions on $\Z$. If we can prove that $\| \sum_j \al_j F_j \|_{\ell^\infty(\Z)} \leq C$ for all sequences of complex numbers $\al_j$ with $\sum_j |\al_j|^2 = 1$, then it follows that
\[ \| (\sum_j |F_j|^2)^{1/2} \|_{\ell^\infty(\Z)}  \leq C,\]
with the same constant $C$.
To see this, 
fix $x \in \Z$. By assumption, the sequence of values $\{F_j(x)\}_j$  satisfies $| \sum_j \al_j F_j(x) | \leq C$ for all sequences $\{\al_j\} \in \ell^2(j \in \Z)$ with $\ell^2$ norm 1. By duality, the sequence $\{ F_j(x)\}_j$ thus lies in $\ell^2(j \in \Z)$, with $ (\sum_j |F_j|^2(x))^{1/2} \leq C$. Since this holds uniformly for every $x$, the claim follows.
 
Thus in order to prove the $\ell^\infty$ case of Theorem \ref{thm_RdF_Psi}, it suffices to prove that there is a constant $C$ such that for every $f \in \ell^\infty$ with $\|f\|_{\infty} \leq 1$, for every sequence $\{a_j\}$ with $\sum_j |a_j|^2 = 1$,  
\beq\label{RdF_Sjf_sum}
| \sum_j a_j S_j(f)(n) | \leq C
\eeq
for every $n \in \Z$. Recall that by $S_j$ we denote the discrete operator with Fourier multiplier $\Psi_{\mathrm{per}}(\del^{-1}(\xi - \xi_j))$. Let $K$ denote the convolution kernel of the operator $\sum_j a_j S_j$. Then by Young's inequality, (\ref{RdF_Sjf_sum})   would follow from the estimate 
\beq\label{RdF_K_est}
\sum_{n \in \Z} |K(n)| \leq C.
\eeq
Let $K_0$ denote the convolution kernel of the discrete operator with multiplier $\Psi_{\mathrm{per}}(\del^{-1}\xi)$, so that
\[
 K(n)  =  ( \sum_j a_je^{2\pi i \xi_j n}) K_0(n).
\]
Precisely
\[ K_0(n) = \int_{(-1/2,1/2]} \Psi (\del^{-1}\xi) e^{2\pi i \xi n} d\xi,
\]
and consequently
$ |K_0(n)| \leq c \del (1+|\del n|)^{-2}.
$
 Indeed, due to the support of $\Psi$ we   have $|K_0(n)| \leq \del \|\Psi \|_{L^\infty}$ for all $n$. On the other hand, integrating twice by parts provides the bound $c\del |\del n|^{-2}$.

In combination with this bound for $K_0(n)$ we require a lemma about exponential sums.
\begin{lemma}\label{lemma_RdF_exp_sum}
Given a sequence $\{ a_j\}$ of complex numbers and a set $\{\xi_j\}$ of real numbers in $(-1/2,1/2]$, define for every $n \in \Z$,
\beq\label{RdF_Sn_dfn_0}
 \Sbf(n) = \sum_j a_j e^{2\pi i \xi_j n}. 
 \eeq
 Fix $0<\del<1$.
If $\{\xi_j\}$ is a  $\del$-separated set,
then for any interval $J$ of length $1/\del$, we have 
\beq\label{RdF_exp_sum}
\sum_{n \in J} |\Sbf(n)|^2 \leq \frac{c}{\del} \sum_j |a_j|^2
\eeq
for a constant $c$ that is independent  of $\del$, the sequences $\{a_j\}$ and $\{\xi_j\}$, and of the interval $J$.
\end{lemma}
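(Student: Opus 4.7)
This is a (dual) large sieve inequality, and my plan is to prove it via a majorant construction combined with Poisson summation. After translating so that $J$ is centered at the origin (which does not affect either side of the inequality), I would construct a non-negative function $\phi: \R \to \R$ with three properties: (i) $\phi(x) \geq 1$ for every $x \in J$; (ii) $\widehat{\phi}$ is supported in $[-\del/2, \del/2]$; and (iii) $\widehat{\phi}(0) = \int_\R \phi \leq c/\del$. A concrete choice is $\phi(x) = (\pi^2/4)\bigl(\sin(\pi \del x/2)/(\pi \del x/2)\bigr)^2$, whose Fourier transform is a triangle function supported in $[-\del/2, \del/2]$ of height $\pi^2/(2\del)$ at the origin; one checks $\phi(x)\geq 1$ for $|x|\leq 1/\del$ using the elementary bound $\sin(\pi u)/(\pi u)\geq 2/\pi$ on $|u|\leq 1/2$.

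With this majorant at hand, I would bound
\[
\sum_{n \in J} |\Sbf(n)|^2 \leq \sum_{n \in \Z} \phi(n) |\Sbf(n)|^2,
\]
expand $|\Sbf(n)|^2 = \sum_{j,k} a_j \bar{a}_k e^{2\pi i (\xi_j - \xi_k) n}$, and apply Poisson summation in the form $\sum_{n \in \Z} \phi(n) e^{2\pi i \alpha n} = \sum_{\ell \in \Z} \widehat{\phi}(\ell - \alpha)$, which is valid since $\phi$ and $\widehat{\phi}$ are both continuous and in $L^1(\R)$. On the diagonal $j = k$, only $\ell = 0$ survives (since $\widehat{\phi}$ is supported in an interval of length less than $1$), contributing $\widehat{\phi}(0)\,|a_j|^2$. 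For $j \neq k$, the $\del$-separation of $\{\xi_j\}$ forces $|\ell - (\xi_j - \xi_k)| \geq \del/2$ for every $\ell \in \Z$ (provided the separation is interpreted on the torus, a subtlety discussed below), making every off-diagonal term vanish. Summing gives $\widehat{\phi}(0)\sum_j |a_j|^2 \leq (c/\del) \sum_j |a_j|^2$, which is the claim.

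The main obstacle is the distinction between $\del$-separation on the line (as defined in the text) and $\del$-separation on the torus $\R/\Z$: the Poisson argument genuinely requires the torus version, because one must kill $\widehat{\phi}(\ell - (\xi_j - \xi_k))$ for all $\ell \in \Z$, not just $\ell = 0$. In the application of Lemma \ref{lemma_sep} to $\Rcal(Z) \subset (0,1]$, a short arithmetic computation (noting that $qq' - |aq' - a'q|$ is a positive integer whenever $a/q \neq a'/q' \in (0,1)$, and hence is at least $1$) shows both forms of separation hold at the same scale $\del \geq q(Z)^{-2}$, so the lemma is applicable in context. A secondary technical point is designing a single majorant $\phi$ satisfying all three desired properties simultaneously at the sharp scale; the sinc-squared choice above is the classical near-optimal solution to this uncertainty-principle constraint.
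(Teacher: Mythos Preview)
Your proposal is correct and follows essentially the same approach as the paper: majorize $\sum_{n\in J}|\Sbf(n)|^2$ by a weighted sum $\sum_{n\in\Z}\phi(n)|\Sbf(n)|^2$ with a non-negative $\phi$ whose Fourier transform is supported in $[-\del/2,\del/2]$, expand the square, apply Poisson summation, and use $\del$-separation to kill the off-diagonal terms. The only cosmetic difference is that the paper uses a generic smooth majorant $\phi=|\widehat{\Phi_1}|^2$ with $\Phi_1\in C_c^\infty(|x|<1/4)$, which satisfies $\phi\geq 1$ only on a small neighborhood $|x|\leq c_1$; it therefore first treats intervals of length $c_1/\del$ and then covers a general $J$ by boundedly many translates, whereas your explicit Fej\'er kernel handles the full interval in one shot.

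Your remark about line versus torus $\del$-separation is well taken: the Poisson identity produces $\sum_{\ell\in\Z}\widehat{\phi}(\ell-(\xi_j-\xi_{j'}))$, and vanishing for $j\neq j'$ genuinely requires the distance from $\xi_j-\xi_{j'}$ to every integer to exceed $\del/2$. The paper's proof encounters the same point (its identity $\sum_n e^{2\pi i n u}\phi(n\del)=\del^{-1}\Phi(\del^{-1}u)$ is stated for $u\in\T$, so one must take the representative of $\xi_j-\xi_{j'}$ in $(-1/2,1/2]$) but does not flag it. As you note, in the intended application to $\Rcal(Z)$ both notions of separation hold at the same scale, so no harm is done.
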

 
\begin{remark}\label{remark_Parseval}
Let a positive integer $N$ be given. If we take $\xi_j = j/N$ for $j=1,\ldots, N$, then $\{\xi_j\}$ is a  $\del$-separated set in $(-1/2,1/2]$ (identified with the torus) with $\del = 1/N$. In this case (\ref{RdF_exp_sum}) becomes the identity 
\[ \sum_{j=1}^N |\Sbf(n)|^2 = N \sum_{j=1}^N |a_j|^2,\]
which is the Parseval-Plancherel identity for the group $\Z/N\Z$.
 \end{remark}

\begin{proof}[Proof of Lemma \ref{lemma_RdF_exp_sum}]

We will prove (\ref{RdF_exp_sum}) by inserting a smooth weight. We fix an auxiliary function $\phi$, such that $\phi (x) \geq 0$ for all $x\in \R$, $\phi(x) \geq 1$ for $|x| \leq c_1$ for some $c_1 >0$, and $\phi(x)  = \widehat{\Phi}(x)$, where $\Phi$ is $C^\infty$ and is supported in $(-1/2,1/2]$. (To construct $\phi$, let $\Phi_1 \in C^\infty$ be supported in $|x| < 1/4$ with $\int\Phi_1(x) dx  > 1$. Then let $\Phi_1^* (x) =\overline{\Phi_1}(-x)$ and define $\Phi = \Phi_1 * (\Phi_1^*)$, so that $\phi = | \widehat{\Phi_1}|^2$.   In particular, $\phi(0) = | \widehat{\Phi_1}(0)|^2 > 1,$ so that this inequality also holds in a small neighborhood of the origin.)
With this definition, we claim
\beq\label{RdF_phi_identity}
 \sum_{n \in \Z} e^{2\pi i n u} \phi(n\del) = \del^{-1} \Phi(\del^{-1}u) .
\eeq
Indeed, the property  $\phi = \widehat{\Phi}$ yields that for any $u \in \T$,

\begin{align*}
\sum_{n \in \Z} e^{2\pi i n u} \phi(n\del) 
&= \sum_{n \in \Z} e^{2\pi i nu} \int_\R \Phi(x)e^{-2\pi i nx \del}  dx  \nonumber \\
	& = \sum_{n \in \Z} e^{2\pi i nu} \int_\R \del^{-1} \Phi(\del^{-1}x) e^{-2\pi i nx} dx. \label{RdF_exp_sum4}
\end{align*}
However, for $\del \leq 1$, then $\Phi(\del^{-1}x)$ is supported in $(-1/2,1/2]$, and the last sum is the Fourier series expansion of the function $\del^{-1}\Phi(\del^{-1} \cdot  )$ in the interval $(-1/2,1/2]$, and so by the Fourier inversion formula the quantities on each side of the identity   are equal to $\del^{-1} \Phi(\del^{-1}u)$, as claimed. 

We turn to the proof of (\ref{RdF_exp_sum}). It suffices to consider the case in which the interval $J$ is taken to be $\{ n : |n| \leq c_1/\del\}$ with $c_1$ as above. 
Indeed, once we have proved the result for such an interval $J$, it automatically holds for every translate $J+h$, because the coefficients $a_j$ are then simply replaced by   $a_j e^{2\pi i \xi_j h}$, which have the same $\ell^2$ norm. Moreover any interval   of length $1/\del$ is covered by at most a bounded number of such translates (depending only on $c_1$). 

Having reduced to this case, we observe that
\beq\label{RdF_exp_sum2}
\sum_{|n| \leq c_1/\del} |\Sbf(n)|^2 \leq \sum_{n \in \Z} |\Sbf(n)|^2 \phi(n\del).
\eeq
On the other hand, squaring gives 
\[
|\Sbf(n)|^2 = \sum_{j,j'} a_j \overline{a_{j'}} e^{2\pi i (\xi_j - \xi_{j'})n}.
\]
We insert this in (\ref{RdF_exp_sum2}) and then apply (\ref{RdF_phi_identity})  with $u = \xi_j - \xi_{j'}$,   to conclude that
\beq\label{RdF_Squash_Diag}
 \sum_{|n| \leq c_1/\del} |\Sbf(n)|^2 \leq \del^{-1} \sum_{j,j'} a_j \overline{a_{j'}} \Phi (\del^{-1}(\xi_j - \xi_{j'}))
	= c \del^{-1} \sum_{j} |a_j|^2 
	\eeq
	with $c= \Phi(0)$, since $\Phi(\del^{-1}(\xi_j - \xi_{j'}))=0$ for all $j \neq j'$, by the assumed $\del$-separation of the set $\{\xi_j\}$. The lemma is  proved.

\end{proof}

We apply Lemma  \ref{lemma_RdF_exp_sum} to prove the    upper bound (\ref{RdF_K_est}) for $\| K \|_{\ell^1}$. 
Recall that we are given  a sequence $\{a_j\}$ with $\sum_j |a_j|^2=1$, and we define $K$ as before to be the kernel of the operator $\sum_j a_j S_j$, so that $K(n) = \Sbf(n) K_0(n)$. By the upper bound for $K_0(n)$, 
$ |K(n)| \leq c |\Sbf(n)| \del (1+ |\del n|)^{-2},
$
and we will apply Lemma \ref{lemma_RdF_exp_sum} to bound $\Sbf(n)$.
We break the summation in (\ref{RdF_K_est}) over $\Z$ into intervals of length $1/\del$, according to a disjoint union 
 $ \Z = \Union_{k \in \Z} I_k $
with 
\[ I_k = \{ n : \frac{k-1/2}{\del} < n \leq \frac{k+1/2}{\del} \}.\]
Applying Cauchy-Schwarz to each sum over $n \in I_k$,  the left-hand side of (\ref{RdF_K_est}) is majorized by 
\[ c \sum_{k \in \Z} A_k B_k,\]
 where 
\beq\label{RdF_AkBk}
 A_k  = (\sum_{n \in I_k} |\Sbf(n)|^2)^{1/2}, \qquad B_k = \del ( \sum_{n \in I_k} (1 + |\del n|)^{-4})^{1/2}.
 \eeq
However, $A_k \leq c\del^{-1/2}$ by Lemma \ref{lemma_RdF_exp_sum}, while $B_k \leq c \del^{1/2}$ if $k=0$ and $B_k \leq c \del^{1/2} |k|^{-2}$ if $k \neq 0$.
As a result the sum is majorized by 
$ c( 1 + \sum_{k \neq 0} |k|^{-2}) \leq c'$
and  (\ref{RdF_K_est}) is proved. This concludes the proof of Theorem \ref{thm_RdF_Psi} in the case $p = \infty$. Consequently we have also completed the proof of  the first converse inequality in Theorem \ref{thm_RdF_LP}.

\subsection{The second converse inequality}
The final step  is to prove the second converse inequality in  Proposition \ref{prop_converse_{2r}},  an $\ell^{2r}$ bound for the operator
\[f \mapsto ( \sum_{q \in Z} |\sum_{a/q \in \Rcal(q)} f_{a/q} |^{2r})^{1/2r}.\]
This will require not just separation properties of elements in $\Rcal(Z)$, but more intricate arithmetic information as well.
In particular, we recall the notation
\[ \Omega(Z) = \max \{ \omega(q) : q \in Z\},\]
in which $\om(q)$  is the number of distinct prime divisors of an integer $q$.
We state a general theorem that implies the second converse inequality of  Proposition \ref{prop_converse_{2r}}.
\begin{thm}\label{thm_RdF_diag_F_Lp}
 Let an integer $r \geq 1$ be fixed. Let $Z$ be a finite set of integers contained in $ (1, q(Z)]$ and fix $\del <  q(Z)^{-2}$.  Let $m$ be an $L^{2r}(\R)$ multiplier of norm $A_{2r}$ and assume furthermore that $m(\xi)$ is supported in $|\xi| \leq c_0 \delta$ for a constant $c_0 <1/2$.    Given a point $u \in (-1/2,1/2]$, let $T_{u}$ be the operator with multiplier $m(\xi - u)$. Then the corresponding discrete operators $T_u$ have the property that for all $f \in \ell^{2r}(\Z)$,
\[
  \|(\sum_{q \in Z}  | \sum_{u \in \Rcal(q)} T_{u} f |^{2r})^{\frac{1}{2r}}\|_{\ell^{2r}(\Z)}
	\leq C_{2r}(2^{\Omega(Z)})^{1 - 1/r} \| f\|_{\ell^{2r}(\Z)},
\]
in which $C_{2r}$ is a constant depending only on $c_0$ and $A_{2r}$.
\end{thm}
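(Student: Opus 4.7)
The plan is to combine the Magyar–Stein–Wainger (MSW) sampling principle—which gives sharp $\ell^{2r}$ bounds for each individual $F_q := \sum_{u \in \Rcal(q)} T_u$—with a Rubio de Francia square function argument to sum over $q \in Z$ efficiently. As setup: the Fourier multiplier of $F_q$ is $M_q(\xi) = \sum_{a \in \Rcal(q)} m_{\mathrm{per}}(\xi - a/q)$, and since $c_0\delta < (2q)^{-1}$ the summands are pairwise disjoint in the torus by Lemma~\ref{lemma_sep}. Consequently the convolution kernel factorizes as $K_q(n) = c_q(n) K_0(n)$, where $c_q$ is the Ramanujan sum and $K_0$ is the $\Z$-restriction of the kernel of the continuous operator $T_{\mathrm{cont}}$ on $\R$ whose multiplier is $m$.

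First I would establish the single-$q$ bound $\|F_q f\|_{\ell^{2r}(\Z)} \leq C_{2r} A_{2r} (2^{\omega(q)})^{1-1/r} \|f\|_{\ell^{2r}(\Z)}$. Under the identification $\ell^{2r}(\Z) \cong \ell^{2r}(\Z \times \Z/q\Z)$ via $f(kq+b) \leftrightarrow g(k,b)$ and the discrete Fourier transform on the $\Z/q\Z$ fiber, the MSW sampling principle factors $F_q$ as $T_{\mathrm{cont}} \otimes \mathcal{N}_q$, where $\mathcal{N}_q$ is the Fourier multiplier on $\Z/q\Z$ with symbol $\beta(\alpha) = \mathbf{1}_{(\alpha,q)=1}$. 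Since $m$ is supported in an interval of length less than $1/q$, this yields $\|F_q\|_{\ell^{2r}(\Z)} \leq A_{2r}\,\|\mathcal{N}_q\|_{\ell^{2r}(\Z/q\Z)}$. The convolution kernel of $\mathcal{N}_q$ is $q^{-1}c_q(n)$; multiplicativity of the Ramanujan sum ($\sum_{n \bmod p^e} |c_{p^e}(n)| = 2\phi(p^e)$) gives $\|\mathcal{N}_q\|_{\ell^\infty \to \ell^\infty} \leq q^{-1}\cdot 2^{\omega(q)}\phi(q) \leq 2^{\omega(q)}$, while $\|\mathcal{N}_q\|_{\ell^2 \to \ell^2} = \|\beta\|_\infty = 1$. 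Riesz--Thorin interpolation then delivers $\|\mathcal{N}_q\|_{\ell^{2r}} \leq (2^{\omega(q)})^{1-1/r}$, completing the single-$q$ estimate.

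To sum over $q$ without accruing a factor of $|Z|$, I would exploit that $F_q f = F_q P_q f$, where $P_q = \sum_{a \in \Rcal(q)} S_{a/q}$ is the smooth Rubio de Francia projection onto the $\delta$-neighborhood of $\Rcal(q)$ built from Theorem~\ref{thm_RdF_Psi}, and that the supports of $P_q$ for distinct $q \in Z$ are mutually disjoint by Lemma~\ref{lemma_sep}. Combining with Paragraph 2 and the uniform bound $(2^{\omega(q)})^{1-1/r} \leq (2^{\Omega(Z)})^{1-1/r}$ gives
\[
\sum_q \|F_q f\|_{\ell^{2r}}^{2r} \leq C^{2r} A_{2r}^{2r} (2^{\Omega(Z)})^{2r-2} \sum_q \|P_q f\|_{\ell^{2r}}^{2r}.
\]
The inequality $\ell^{2r} \leq \ell^2$ for $r \geq 1$ yields $\sum_q |P_q f(n)|^{2r} \leq (\sum_q |P_q f(n)|^2)^r$ pointwise, so the task reduces to the square function bound $\|(\sum_q |P_q f|^2)^{1/2}\|_{\ell^{2r}(\Z)} \leq C\|f\|_{\ell^{2r}(\Z)}$; once this is available, the conclusion follows by raising to the $2r$-power.

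The main obstacle is this last square function bound. The naive pointwise Cauchy--Schwarz $|P_q f|^2 \leq \phi(q)\sum_a |S_{a/q}f|^2$ followed by Proposition~\ref{prop_converse_{2r}_RdF} loses a factor of $q(Z)^{1/2}$, which is prohibitive. The correct argument must instead adapt the proof of Theorem~\ref{thm_RdF_Psi}—in particular the exponential-sum Lemma~\ref{lemma_RdF_exp_sum} and the duality device behind its $\ell^\infty$ endpoint—to multi-bump projections $P_q$, relying on the $\delta$-separation of the consolidated supports across different $q \in Z$ rather than on $\delta$-separation of individual rationals. This is the delicate technical core: only by obtaining a constant independent of $\phi(q)$ at this step does the sharp arithmetic factor $(2^{\Omega(Z)})^{1-1/r}$ that emerged from the interpolation of $\mathcal{N}_q$ survive into the final bound.
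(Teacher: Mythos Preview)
Your single-$q$ bound and the reduction $F_q f = F_q P_q f$ are correct, but the approach stalls at exactly the step you flag as the ``delicate technical core.'' The uniform square function bound $\|(\sum_q |P_q f|^2)^{1/2}\|_{\ell^{2r}} \le C\|f\|_{\ell^{2r}}$ with $C$ independent of $\Omega(Z)$ is not available: already for a single $q$, the smooth multi-bump projection $P_q$ has $\|P_q\|_{\ell^\infty\to\ell^\infty}\asymp 2^{\omega(q)}$ (this is the content of Lemma~\ref{lemma_split} and the kernel computation that follows it), so interpolation with the trivial $\ell^2$ bound gives $\|P_q\|_{\ell^{2r}\to\ell^{2r}} \lesssim (2^{\omega(q)})^{1-1/r}$, and there is no mechanism to remove this. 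The exponential-sum lemma you propose to adapt relies on $\delta$-separation of \emph{individual} frequencies; it cannot see ``consolidated'' groups $\Rcal(q)$ as single well-separated objects, because within each group the $\phi(q)$ bumps interact. Consequently your scheme pays the arithmetic factor twice---once from $\mathcal N_q$ and once from $P_q$---and delivers $(2^{\Omega(Z)})^{2(1-1/r)}$ rather than $(2^{\Omega(Z)})^{1-1/r}$.

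The paper avoids this via a different factorization. Instead of $F_q = F_q \circ P_q$ (which leaves the arithmetic obstruction in both factors), Lemma~\ref{lemma_factor} writes $F_q = G_q \circ P_q$ where $G_q$ has multiplier $\sum_{w\in\Fcal(q)} m_{\mathrm{per}}(\xi-w)$ summed over the \emph{full} residue set $\Fcal(q)$ rather than $\Rcal(q)$. The point is that MSW sampling (Theorem~\ref{thm_maxlp_same_denom}) bounds $G_q$ on $\ell^{2r}$ \emph{uniformly in $q$}, with no arithmetic loss. All of the $2^{\Omega(Z)}$ cost is then carried by the smooth part $P_q$ alone, and Theorem~\ref{thm_RdF_diag_F_Lp_smooth} handles $\sum_q\|P_q f\|_{2r}^{2r}$ by interpolating the map $f\mapsto (P_q f)_q$ between $\ell^2(\Z\times Z)$ (Plancherel plus disjointness) and $\ell^\infty(\Z\times Z)$ (where the $2^{\Omega(Z)}$ enters). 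This way the arithmetic factor appears exactly once. The missing ingredient in your outline is precisely this passage from $\Rcal(q)$ to $\Fcal(q)$ in the rough-multiplier factor.
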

 Proposition \ref{prop_converse_{2r}} immediately follows, as long as  $\ep$  is sufficiently small that $m(\ep^{-1}(\cdot))$ satisfies the hypotheses of the theorem;   $\ep<  q(Z)^{-2}$ suffices.

Theorem \ref{thm_RdF_diag_F_Lp} is a consequence of the ``method of sampling,'' as developed in seminal work of  \cite[\S 2]{MSW} on the discrete spherical maximal function. Let us recall the main consequence of this principle.
 In the notation introduced at the beginning of the section, the method of sampling shows how the norm of the real-variable operator $T$ with $L^p$ multiplier $m$ controls the norm of the corresponding discrete operator $T_{\mathrm{dis}}$, as long $m$ is supported in $(-1/2,1/2]$. The variant we state here is an arithmetic consequence of this general result when the multiplier is shifted by $b/Q$ for all $1 \leq b \leq Q$, valid in the case that the multiplier has an even smaller support in $(-1/(2Q),1/(2Q)]$. 
\begin{thm}[The method of sampling]\label{thm_maxlp_same_denom}
Let $1 \leq p  \leq \infty$ be fixed. Let $\mu$ be an $L^p(\R)$ multiplier of norm $M_p$ that is   compactly supported in $(-1/2,1/2]$.
Fix an integer $Q \geq 1$ and $\ep< Q^{-1}$.
Then the discrete operator $S$ with Fourier multiplier 
\[ \sum_{b=1}^Q \mu_{\mathrm{per}}(\ep^{-1}(\xi - b/Q)) \]
extends to a bounded operator on $\ell^p(\Z)$, with 
$ \| S  f\|_{\ell^p(\Z)} \leq C_p \| f \|_{\ell^p(\Z)},
$
in which the constant $C_p$ may depend on $p$ and  $M_p$    but is independent of $Q$ and $\ep$. 
\end{thm}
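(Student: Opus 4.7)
The plan is to reduce the theorem to the basic sampling principle of \cite{MSW} (the case $Q=1$), which asserts that for any $L^p(\R)$ multiplier $\nu$ of norm $N_p$ compactly supported in $(-1/2,1/2]$, the discrete operator with multiplier $\nu_{\mathrm{per}}$ is bounded on $\ell^p(\Z)$ with a norm controlled by $N_p$; I would take this as a black box. The central observation is that the multiplier
\[ M(\xi) = \sum_{b=1}^Q \sum_{\ell \in \Z} \mu(\ep^{-1}(\xi - b/Q - \ell)) \]
is $1/Q$-periodic in $\xi$: a shift $\xi \mapsto \xi + 1/Q$ reindexes $b \mapsto b-1$, replacing $\{1,\ldots,Q\}$ by $\{0,1,\ldots,Q-1\}$, and the missing $b=0$ term is supplied by the original $b=Q$ term after the reindexing $\ell \mapsto \ell+1$. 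The hypothesis $\ep < 1/Q$ ensures the bumps composing $M$ are pairwise disjoint, and Fourier inversion forces the convolution kernel $K$ of $S$ to be supported on the sublattice $Q\Z$.

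Armed with this support property, I would split $S$ by residue classes mod $Q$. Writing each $x \in \Z$ uniquely as $x = Qn + r$ with $r \in \{0,\ldots,Q-1\}$ and setting $f_r(n) = f(Qn+r)$, the kernel's support gives $Sf(Qn+r) = (\tilde K * f_r)(n)$, where $\tilde K(k) := K(Qk)$. Since
\[ \|Sf\|_{\ell^p(\Z)}^p = \sum_{r=0}^{Q-1} \|\tilde K * f_r\|_{\ell^p(\Z)}^p \quad \text{and} \quad \|f\|_{\ell^p(\Z)}^p = \sum_{r=0}^{Q-1} \|f_r\|_{\ell^p(\Z)}^p, \]
the theorem reduces to a uniform-in-$(Q,\ep)$ bound on convolution with $\tilde K$.

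To finish, I would identify the Fourier multiplier $\tilde M$ of $\tilde K$ and invoke the base case. By the $1/Q$-periodicity, $\tilde M(\eta) = M(\eta/Q)$ as a $1$-periodic function of $\eta$. Examining $M$ on its fundamental interval $\xi \in (-1/(2Q), 1/(2Q)]$: the constraint $|\xi - b/Q - \ell| \leq \ep/2$ combined with $\ep < 1/Q$ and $b \in \{1,\ldots,Q\}$ leaves only the summand with $(b,\ell) = (Q,-1)$, giving $M(\xi) = \mu(\ep^{-1}\xi)$ there, and hence $\tilde M(\eta) = \mu((Q\ep)^{-1}\eta)$ on $(-1/2,1/2]$. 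By dilation invariance of $L^p(\R)$-multiplier norms, $\mu((Q\ep)^{-1}\cdot)$ is an $L^p(\R)$ multiplier of norm $M_p$ supported in $(-Q\ep/2, Q\ep/2] \subset (-1/2,1/2]$. The unshifted sampling principle then yields $\|\tilde K * f_r\|_{\ell^p(\Z)} \leq C_p \|f_r\|_{\ell^p(\Z)}$ with $C_p$ depending only on $p$ and $M_p$, and summing over $r$ completes the proof.

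The main technical obstacle I anticipate is the careful bookkeeping required to verify the $1/Q$-periodicity and to isolate the single surviving summand on the fundamental domain, since two distinct scales of periodization are in play: integer shifts in $\xi$ (defining $M$) versus integer shifts in $\ep^{-1}\xi$ (implicit in the standard multiplier notation). Once this is set up cleanly, the reduction to the unshifted sampling principle is essentially mechanical.
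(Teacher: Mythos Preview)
The paper does not include a proof of this theorem; it explicitly defers to \cite[Prop.~2.1 and Cor.~2.1]{MSW}. Your argument is correct and is precisely the standard derivation of the arithmetic corollary (Cor.~2.1) from the basic sampling principle (Prop.~2.1) in that reference: exploit the $1/Q$-periodicity of the multiplier to see the kernel lives on $Q\Z$, split $\ell^p(\Z)$ into residue classes, and identify the resulting multiplier on each class as a dilate of $\mu$ to which the $Q=1$ case applies. One small remark: the disjointness of the bumps is not needed to conclude that $K$ is supported on $Q\Z$ (the $1/Q$-periodicity alone gives this); you only use $\ep<1/Q$ later, when isolating the single surviving summand on $(-1/(2Q),1/(2Q)]$ to identify $\tilde M$.
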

 
 A key accomplishment of this result is that the discrete operator norm is independent of both $Q$ and $\ep$.  Giving a full proof would take us too far afield, so we refer to \cite[Prop. 2.1 and Cor. 2.1]{MSW}.

Note that Theorem \ref{thm_maxlp_same_denom} considers \emph{all} fractions of denominator $Q$ when defining the multiplier, but in our application we   consider only the \emph{irreducible} fractions $\Rcal(Q)$. 
As a first observation, we  deduce a corollary for multipliers defined with only irreducible fractions, albeit with a norm that depends on $Q$.
\begin{cor}\label{cor_sampling}
Let $1 \leq p  \leq \infty$ be fixed. Let $\mu$ be an $L^p(\R)$ multiplier of norm $M_p$ that is   compactly supported in $(-1/2,1/2]$.
Fix an integer $Q \geq 1$ and $\ep< Q^{-1}$.
Then   the discrete operator $R$ with 
Fourier multiplier 
\[ \sum_{u \in \Rcal(Q)} \mu_{\mathrm{per}}(\ep^{-1}(\xi - u)) \]
extends to a bounded operator on $\ell^p(\Z)$, with 
$ \| R  f\|_{\ell^p(\Z)} \leq C_p 2^{\omega(Q)} \| f \|_{\ell^p(\Z)},
$
in which the constant $C_p$ is the constant in Theorem \ref{thm_maxlp_same_denom}.
\end{cor}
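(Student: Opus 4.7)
The plan is to reduce Corollary~\ref{cor_sampling} to Theorem~\ref{thm_maxlp_same_denom} by decomposing the sum over irreducible fractions via Möbius inversion. First, I would use the identity $\mathbf{1}_{(a,Q)=1} = \sum_{d \mid (a,Q)} \nu(d)$, where $\nu$ is the Möbius function (distinguished here from the multiplier $\mu$ in the statement), to write
\[ \sum_{u \in \Rcal(Q)} \mu_{\mathrm{per}}(\ep^{-1}(\xi-u)) = \sum_{a=1}^{Q} \mathbf{1}_{(a,Q)=1}\, \mu_{\mathrm{per}}(\ep^{-1}(\xi - a/Q)) = \sum_{d \mid Q} \nu(d) \sum_{\substack{1 \leq a \leq Q \\ d \mid a}} \mu_{\mathrm{per}}(\ep^{-1}(\xi - a/Q)). \]
Substituting $a = db$ in the inner sum, so that $b$ ranges over $1 \leq b \leq Q/d$ and $a/Q = b/(Q/d)$, this rewrites as
\[ \sum_{d \mid Q} \nu(d) \sum_{b=1}^{Q/d} \mu_{\mathrm{per}}\!\left(\ep^{-1}\!\left(\xi - \tfrac{b}{Q/d}\right)\right). \]

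Next, I would observe that for each fixed $d \mid Q$, the inner sum is exactly the multiplier appearing in Theorem~\ref{thm_maxlp_same_denom} applied with $Q$ there replaced by $Q' := Q/d$. The hypothesis $\ep < (Q')^{-1}$ holds for every $d \mid Q$ because $Q' \leq Q$ and we have assumed $\ep < Q^{-1}$. Consequently each inner discrete operator is bounded on $\ell^p(\Z)$ by a constant $C_p$ that depends on $p$ and $M_p$ alone, crucially not on $Q'$ or $\ep$.

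Finally, I would apply the triangle inequality to the outer sum over $d \mid Q$. The nonzero contributions are restricted to squarefree divisors of $Q$ (since $\nu(d) = 0$ otherwise), of which there are exactly $2^{\omega(Q)}$, and $|\nu(d)| \leq 1$ on the squarefree divisors. This yields
\[ \|Rf\|_{\ell^p(\Z)} \leq \sum_{\substack{d \mid Q \\ d \text{ squarefree}}} |\nu(d)| \cdot C_p \|f\|_{\ell^p(\Z)} \leq C_p \cdot 2^{\omega(Q)} \|f\|_{\ell^p(\Z)}, \]
as claimed. There is no substantive obstacle beyond the bookkeeping of this Möbius decomposition: all the analytic work has been absorbed into Theorem~\ref{thm_maxlp_same_denom}, and the factor $2^{\omega(Q)}$ is the natural price one pays for extracting the irreducible fractions from the full set via inclusion-exclusion. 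The only point meriting a quick check is that the $\ep$-hypothesis for the sub-problem with denominator $Q/d$ follows automatically from the original hypothesis for $Q$, which it does because $Q/d \leq Q$.
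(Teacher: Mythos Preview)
Your proof is correct and is essentially the same as the paper's. The paper states the key identity as Lemma~\ref{lemma_split}, parameterized by tuples $(\ep_1,\ldots,\ep_k)\in\{0,1\}^k$ indexed by the prime divisors of $Q$; under the substitution $d=p_1^{\ep_1}\cdots p_k^{\ep_k}$ this is exactly your M\"obius inversion, and the paper then applies Theorem~\ref{thm_maxlp_same_denom} to each of the $2^{\omega(Q)}$ terms just as you do.
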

Let us see how to deduce this corollary from the theorem. It is convenient to define the counterpart   to $\Rcal(q)$, namely the ``full'' set of fractions
\[ \Fcal(q) = \{a/q: 1 \leq a \leq q\}.\]
  The key to deducing the corollary is a simple identity.
  \begin{lemma}\label{lemma_split}
   Let  $h$ be the periodization of a function compactly supported in $(-1/2,1/2]$.  
Fix an integer $q$ with prime factorization $q=p_1^{\al_1} \cdots p_k^{\al_k}$ with distinct primes $p_1,\ldots, p_k$. Then
  \beq\label{arith_sum}
  \sum_{u \in \Rcal(q)} h(u) =  \sum_{\ep_1, \ldots, \ep_k \in \{0,1\}} (-1)^{|\underline{\ep}|} \sum_{w \in \Fcal(p_1^{\al_1-\ep_1} \cdots p_k^{\al_k - \ep_k})} h(w),
 \eeq
 where for each $\underline{\ep} = (\ep_1,\ldots, \ep_k) \in \{0,1\}^k$ we have defined $|\underline{\ep}| = {\sum_j \ep_j}$.
 \end{lemma}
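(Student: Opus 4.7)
The plan is a direct inclusion--exclusion argument on the coprimality condition, followed by a simple substitution that identifies each term with a sum over a smaller $\Fcal$. The periodicity hypothesis on $h$ plays essentially no role in establishing the identity itself; it is ambient context for the application of the lemma.

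First I would expand $\sum_{u \in \Rcal(q)} h(u) = \sum_{a=1}^{q} h(a/q)\onebf_{\gcd(a,q)=1}$ and rewrite the coprimality indicator. Since the distinct prime divisors of $q$ are exactly $p_1,\ldots,p_k$, the condition $\gcd(a,q)=1$ is equivalent to $p_i \nmid a$ for every $i=1,\ldots,k$. Writing $\onebf_{p_i \nmid a} = 1 - \onebf_{p_i \mid a}$ and multiplying out gives
\[
\onebf_{\gcd(a,q)=1} = \prod_{i=1}^k (1 - \onebf_{p_i \mid a}) = \sum_{\underline{\ep} \in \{0,1\}^k} (-1)^{|\underline{\ep}|} \onebf_{d_{\underline{\ep}} \mid a},
\]
where $d_{\underline{\ep}} = \prod_{i : \ep_i = 1} p_i$ (and $d_{\underline{0}} = 1$); this is the inclusion--exclusion/M\"obius identity and uses that the $p_i$ are distinct primes, so divisibility by a subset of them is equivalent to divisibility by their product.

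Next, I would substitute this identity into the sum, interchange the two sums, and reduce to proving that for each fixed $\underline{\ep} \in \{0,1\}^k$,
\[
\sum_{\substack{1 \leq a \leq q \\ d_{\underline{\ep}} \mid a}} h(a/q) = \sum_{w \in \Fcal(p_1^{\al_1-\ep_1} \cdots p_k^{\al_k-\ep_k})} h(w).
\]
This is purely a change of variable: writing $a = d_{\underline{\ep}} b$, as $a$ runs over multiples of $d_{\underline{\ep}}$ in $\{1,\ldots,q\}$, the parameter $b$ runs over $\{1,\ldots, q/d_{\underline{\ep}}\}$, and the evaluation point becomes $a/q = b/(q/d_{\underline{\ep}})$. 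Since $q/d_{\underline{\ep}} = p_1^{\al_1-\ep_1}\cdots p_k^{\al_k-\ep_k}$, the reindexed sum is exactly the sum over $\Fcal(p_1^{\al_1-\ep_1}\cdots p_k^{\al_k-\ep_k})$. Summing over $\underline{\ep}$ then yields the claimed identity.

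There is no real obstacle here --- the lemma is essentially a packaging of the standard M\"obius/inclusion--exclusion identity $\onebf_{\gcd(a,q)=1} = \sum_{d \mid \gcd(a,q)} \mu(d)$ restricted to squarefree $d$ dividing the radical of $q$, combined with the trivial reindexing $a = db$. The one piece of bookkeeping worth double-checking is the matching of exponents: when $\ep_i = 1$ we remove exactly one factor of $p_i$ from $q$ (since $d_{\underline{\ep}}$ is squarefree), which is why the exponent in the denominator drops from $\al_i$ to $\al_i - \ep_i$, never below zero because $\al_i \geq 1$.
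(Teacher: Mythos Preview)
Your proof is correct, and it is genuinely different from the paper's. You work directly on the numerators: expand the indicator $\onebf_{(a,q)=1}=\prod_i(1-\onebf_{p_i\mid a})$, interchange sums, and reindex $a=d_{\underline\ep}b$ to land exactly on $\Fcal(q/d_{\underline\ep})$. The paper instead first treats the prime-power case via $\Fcal(p^\al)=\Rcal(p^\al)\sqcup\Fcal(p^{\al-1})$, then uses the Chinese Remainder Theorem to factor $\Rcal(q)=\Rcal(p_1^{\al_1})+\cdots+\Rcal(p_k^{\al_k})$ (regarding elements modulo~$1$), applies the prime-power identity in each slot, and recombines via $\Fcal(m_1)+\cdots+\Fcal(m_k)=\Fcal(m_1\cdots m_k)$. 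Your route is shorter and, as you note, does not actually use the periodicity of $h$; the paper's route makes the multiplicative (CRT) structure of $\Rcal$ and $\Fcal$ explicit, and that is also why the periodicity hypothesis enters there --- the additive decompositions of $\Rcal$ and $\Fcal$ only hold modulo~$1$, so one needs $h$ to be $1$-periodic to pass the sum through them.
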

Notice that for a given integer $Q$, there are $2^{\om(Q)}$ choices for the tuple $\underline{\ep}$. Thus  we can apply this identity to write the multiplier in the corollary as a signed sum of $2^{\om(Q)}$ multipliers, each of which is of the form considered in the theorem. Since the operator norm in the theorem is independent of $Q$, applying the theorem to each of the $2^{\om(Q)}$ terms then proves the corollary. 

\begin{proof}[Proof of Lemma \ref{lemma_split}]
When $q=p^\al$ is  a prime power, (\ref{arith_sum})
is the claim that
\[  \sum_{u \in \Rcal(q)} h(u) =   \sum_{w \in \Fcal(p^{\al} )} h(w) -  \sum_{w \in \Fcal(p^{\al-1} )} h(w) .\]
 This holds since $\Fcal(p^\al) = \Rcal(p^\al) \sqcup \Fcal(p^{\al-1})$ as a disjoint union,
namely
\[\{ 1 \leq a \leq p^\al \} = \{ 1 \leq a \leq p^\al : (a,p^\al)=1\} \sqcup \{ p a' : 1 \leq a' \leq p^{\al-1}\}.\]
   More generally,  we  will apply the facts that if $q_1$ and $q_2$ are relatively prime, then 
\[ \Fcal(q_1q_2) = \Fcal(q_1) + \Fcal(q_2), \qquad \qquad \Rcal(q_1q_2) = \Rcal(q_1) + \Rcal(q_2).\]
These are consequences of the Chinese Remainder Theorem. Here we regard elements in the sets $\Fcal(\cdot)$ and $\Rcal(\cdot)$ modulo 1 (as we may in our application), and we use  the notation that given sets $\Scal_1$ and $\Scal_2$, $\Scal_1 + \Scal_2 = \{ s_1+ s_2 : s_1 \in \Scal_1, s_2 \in \Scal_2\}$.

Thus if $q=p_1^{\al_1} \cdots p_k^{\al_k}$ with distinct primes $p_i$, it follows that 
\[
 \Rcal(q) = \Rcal(p_1^{\al_1} \cdots q_k^{\al_k}) = \Rcal(p_1^{\al_1}) + \cdots + \Rcal(p_k^{\al_k}),
\]
so that
 \[ \sum_{u \in \Rcal(q)} h(u)  
 	 = \sum_{u_1 \in \Rcal(p_1^{\al_1})} \cdots \sum_{u_k \in \Rcal(p_k^{\al_k})} h(u_1 + \cdots +u_k).\]
 Now we apply the   prime power case to each sum over $\Rcal(p_j^{\al_j})$, so that the right-hand side becomes
 \[
      \sum_{\ep_1, \ldots, \ep_k \in \{0,1\}} (-1)^{|\underline{\ep}|} 
       \sum_{\ga_1 \in \Fcal(p_1^{\al_1-\ep_1})} \cdots \sum_{\ga_k \in \Fcal(p_k^{\al_k - \ep_k})} h(\ga_1 + \cdots +\ga_k).\]
Finally noting that $ \Fcal(p_1^{\al_1-\ep_1})+ \cdots +\Fcal(p_k^{\al_k - \ep_k}) = \Fcal(p_1^{\al_1-\ep_1} \cdots p_k^{\al_k - \ep_k}), $
we recognize the right-hand side of (\ref{arith_sum}), and the identity is proved. 
\end{proof}

 While this corollary is useful,   it does not immediately suffice to prove 
  Theorem \ref{thm_RdF_diag_F_Lp}.
Fix $r \geq 1$ and let $T_u$ be the discrete operator associated to the multiplier $m(\xi-u)$, where we recall that $m(\xi)$ is supported in $|\xi| \leq c_0 \del$, with $c_0< 1/2$, $\del \leq  q(Z)^{-2}$. Examine the norm
 \beq\label{2r_expansion}
  \|(\sum_{q \in Z}  | \sum_{u \in \Rcal(q)} T_{u} f |^{2r})^{\frac{1}{2r}}\|^{2r}_{\ell^{2r}(\Z)} 
 =  \sum_{ q \in Z} \| \sum_{u \in \Rcal(q)} T_{u} f \|^{2r}_{\ell^{2r}(\Z)}. 
 \eeq
The operator $\sum_{u \in \Rcal(q)} T_u$ has Fourier multiplier
 \[\sum_{u \in \Rcal(q)} m_{\mathrm{per}}(\xi-u). \]
For each fixed $q \in Z$, $\del$ is sufficiently small that we can apply Corollary \ref{cor_sampling} to conclude that 
\[ \| \sum_{u \in \Rcal(q)} T_{u} f \|_{\ell^{2r}(\Z)} \leq C_{2r} 2^{\om(q)} \| f\|_{\ell^{2r}(\Z)} \]
with $C_{2r}$ depending on the norm $A_{2r}$ but independent of $q$ and $\del$. 
But this does not suffice to prove Theorem \ref{thm_RdF_diag_F_Lp},  since a trivial summation over $q \in Z$  would lead to an unacceptably large operator norm of size $|Z|C_{2r} 2^{\Om(Z)}$.   
Instead, we will prove a version of Theorem \ref{thm_RdF_diag_F_Lp} for a smooth multiplier, and then pass from the $L^p$ multiplier to the smooth multiplier via an arithmetic factorization identity. We state the theorem for the smooth multiplier:

\begin{thm}\label{thm_RdF_diag_F_Lp_smooth}
Suppose $\Psi$ is a $C^\infty$ function that is compactly supported in $(-1/2,1/2]$, and fix $0 < \del < 1$. Given a point $u \in (-1/2,1/2]$ let $S_u$ be the operator with multiplier $\Psi(\del^{-1}(\xi - u))$.   
  
  Let $Z$ be a finite set of integers  contained in $ (1, q(Z)]$ and suppose the set
$ \Union_{q \in Z} \Rcal(q) 
$
is a  disjoint union and is $\del$-separated. Then if $\del<q(Z)^{-2}$  the corresponding discrete operators $S_u$ have the property that for every $2 \leq p \leq \infty$,
\[
  \|(\sum_{q \in Z}  | \sum_{u \in \Rcal(q)} S_{u} f |^{p})^{\frac{1}{p}}\|_{\ell^{p}(\Z)}
	\leq C_p (2^{\Omega(Z)})^{1 - 2/p} \| f\|_{\ell^{p}(\Z)},
\]
in which $C_p$ is a constant depending only on $p$ and $\Psi$.
\end{thm}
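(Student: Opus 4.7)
The plan is to view the claim as a vector-valued $\ell^p$ bound for the linear operator
\[
T : \ell^p(\Z) \to \ell^p(\Z \times Z), \qquad Tf(n,q) = \sum_{u \in \Rcal(q)} S_u f(n),
\]
where $\Z \times Z$ carries counting measure, so that $\|Tf\|_{\ell^p(\Z \times Z)}$ is exactly the left-hand side of the conclusion. At $p=2$ the target factor $(2^{\Omega(Z)})^{1-2/p}$ equals $1$, and at $p=\infty$ it equals $2^{\Omega(Z)}$, so I would prove the two endpoints directly and then obtain the intermediate range by Riesz--Thorin interpolation, yielding the constant $\|\Psi\|_{L^\infty}^{2/p}(C_\infty 2^{\Omega(Z)})^{1-2/p} \leq C_p (2^{\Omega(Z)})^{1-2/p}$.

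For the $p=2$ endpoint I would compute via Plancherel: for each fixed $q$,
\[
\Bigl\| \sum_{u \in \Rcal(q)} S_u f \Bigr\|_{\ell^2(\Z)}^2 = \int_{-1/2}^{1/2} \Bigl| \sum_{u \in \Rcal(q)} \Psi(\delta^{-1}(\xi-u)) \Bigr|^2 |\widehat{f}(\xi)|^2 \, d\xi.
\]
Because $\bigcup_{q \in Z} \Rcal(q)$ is $\delta$-separated, the supports of the bumps $\Psi(\delta^{-1}(\xi - u))$ are pairwise disjoint across the \emph{full} union, not just within a single $\Rcal(q)$. Expanding the modulus-squared and summing over $q$, the cross terms vanish and the diagonal terms are bounded pointwise by $\|\Psi\|_{L^\infty}^2$, giving $\|Tf\|_{\ell^2(\Z \times Z)} \leq \|\Psi\|_{L^\infty} \|f\|_{\ell^2}$, independent of $\Omega(Z)$. (Equivalently, this is the $p=2$ case of Theorem~\ref{thm_RdF_Psi}.)

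For the $p=\infty$ endpoint I would fix $q \in Z$ and apply Corollary~\ref{cor_sampling} with $\mu = \Psi$, $Q = q$, $\epsilon = \delta$. The multiplier $\Psi$ is $C_c^\infty$, and hence an $L^p(\R)$ multiplier for every $p \in [1,\infty]$ (its inverse Fourier transform $\check{\Psi}$ is Schwartz, so $T_\Psi$ is convolution with an $L^1$ kernel), and the hypothesis $\delta < 1/Q$ holds uniformly for $q \in Z$ since $\delta < q(Z)^{-2} \leq 1/q$. The corollary therefore produces
\[
\Bigl\| \sum_{u \in \Rcal(q)} S_u f \Bigr\|_{\ell^\infty(\Z)} \leq C_\infty \cdot 2^{\omega(q)} \|f\|_{\ell^\infty(\Z)},
\]
with $C_\infty$ depending only on $\Psi$ and independent of $q$ or $\delta$. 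Taking the supremum over $q \in Z$ yields $\|Tf\|_{\ell^\infty(\Z \times Z)} \leq C_\infty \cdot 2^{\Omega(Z)} \|f\|_{\ell^\infty(\Z)}$.

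The main obstacle is the $p=\infty$ endpoint, which is outsourced to the method of sampling packaged in Corollary~\ref{cor_sampling}. What makes the argument work is the double uniformity there: the operator norm of the full-fraction sum $\sum_{w \in \Fcal(q')} S_w$ on $\ell^\infty$ is bounded independently of $q'$ and $\delta$ (this is the content of Theorem~\ref{thm_maxlp_same_denom}), so that after the inclusion--exclusion of Lemma~\ref{lemma_split} only the combinatorial factor $2^{\omega(q)}$ survives. Any residual dependence on $q$ or $\delta$ in the sampling bound would destroy the interpolated exponent $(2^{\Omega(Z)})^{1-2/p}$.
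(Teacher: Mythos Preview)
Your argument is correct and follows the paper's overall structure: set up the operator $T: \ell^p(\Z) \to \ell^p(\Z \times Z)$, prove the endpoints $p=2$ (via Plancherel and $\del$-separation) and $p=\infty$ (via the $2^{\om(q)}$ bound for each fixed $q$), and interpolate. The $p=2$ step matches the paper exactly. For $p=\infty$, you invoke Corollary~\ref{cor_sampling}, which is valid but routes through the black-box method of sampling (Theorem~\ref{thm_maxlp_same_denom}, quoted without proof from \cite{MSW}). The paper instead gives a self-contained argument at this endpoint: it applies the same inclusion--exclusion of Lemma~\ref{lemma_split}, but then bounds $\|\sum_{w \in \Fcal(Q)} S_w f\|_{\ell^\infty}$ by an explicit kernel computation, using the arithmetic identity $\sum_{w \in \Fcal(Q)} e^{2\pi i n w} = Q\onebf_Q(n)$ together with the Schwartz decay of $\Fscr^{-1}\Psi$. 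Your route is shorter; the paper's is more transparent and avoids relying on the external sampling theorem for this particular step, which is in keeping with the purpose of isolating the smooth-multiplier case.
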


To deduce Theorem \ref{thm_RdF_diag_F_Lp} from this, we require the following lemma.

\begin{lemma}\label{lemma_factor}
Fix an integer $q > 1$ and $\del\leq q^{-2}$. If $\mu(\xi)$ and $\phi(\xi)$ are functions compactly supported in $|\xi| \leq \del/2$, then 
\beq\label{RdF_diag_R_m_fact}
 \sum_{u \in \Rcal(q)} \mu(\xi - u) \phi(\xi - u) = ( \sum_{w \in \Fcal(q)} \mu (\xi - w))\cdot ( \sum_{u\in \Rcal(q)} \phi(\xi -u)),
 \eeq
 regarding the elements in $\Fcal(q)$ and $\Rcal(q)$ modulo 1.
Correspondingly, this holds for the periodizations $\mu_{\mathrm{per}}$ and $\phi_{\mathrm{per}}$ as well.
 \end{lemma}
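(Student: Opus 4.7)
The plan is to expand the right-hand side and show that all the ``off-diagonal'' $(w,u)$ pairs vanish because of the support hypothesis on $\mu$ and $\phi$, leaving exactly the left-hand side.

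First I would expand the product on the right:
\[
\left(\sum_{w \in \Fcal(q)} \mu(\xi - w)\right)\left(\sum_{u \in \Rcal(q)} \phi(\xi - u)\right) = \sum_{u \in \Rcal(q)}\sum_{w \in \Fcal(q)} \mu(\xi-w)\phi(\xi-u).
\]
For a fixed pair $(w,u) \in \Fcal(q) \times \Rcal(q)$, the product $\mu(\xi-w)\phi(\xi-u)$ is supported in the intersection of $|\xi - w| \leq \delta/2$ and $|\xi - u| \leq \delta/2$, so it is identically zero unless $|w - u| \leq \delta$ (viewing $w,u$ as representatives in an appropriate fundamental domain of $\R/\Z$).

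The key arithmetic step is to observe that $w - u \pmod 1$ takes values in $\frac{1}{q}\Z / \Z$, since both $w$ and $u$ have the form $a/q$ with $a \in \{1,\ldots,q\}$. Hence either $w \equiv u \pmod 1$, or $|w - u|_{\R/\Z} \geq 1/q$. Since by hypothesis $\delta \leq q^{-2} < 1/q$ (using $q > 1$), the second alternative is incompatible with $|w-u| \leq \delta$, and we must have $w \equiv u \pmod 1$. Because $u \in \Rcal(q)$ and the elements of $\Fcal(q)$ give distinct residues mod $1$, the unique such $w \in \Fcal(q)$ is $w = u$ itself. Therefore every surviving term on the right has $w = u$, and the double sum collapses to $\sum_{u \in \Rcal(q)} \mu(\xi-u)\phi(\xi-u)$, which is the left-hand side.

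For the periodized version, I would note that the identity between $\mu$ and $\phi$ above is local on the fundamental domain $(-1/2,1/2]$, and periodization commutes with the shifted sums: $\mu_{\mathrm{per}}(\xi - u) = \sum_{\ell \in \Z}\mu(\xi - u - \ell)$, and similarly for $\phi_{\mathrm{per}}$. The same separation argument applies on each fundamental domain modulo $1$, since $\Fcal(q)$ and $\Rcal(q)$ are taken modulo $1$ and the support condition $|\xi| \leq \delta/2$ on $\mu$ and $\phi$ guarantees that in each period the $1/q$-separation argument still forces $w \equiv u \pmod 1$. The main (and only) obstacle is the bookkeeping to make sure one is handling the mod $1$ identification cleanly; once that is in place the proof is a one-line observation about the $1/q$-separation of $\Fcal(q) \cup \Rcal(q) \pmod 1$ compared with the width $\delta \leq q^{-2}$ of the supports.
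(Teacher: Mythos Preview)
Your proof is correct and in fact somewhat cleaner than the paper's. The paper decomposes $\Fcal(q) = \Rcal(q) \sqcup \Dcal(q)$ (with $\Dcal(q)$ the reducible fractions $a/q$) and treats the two pieces separately: the $\Rcal(q)\times\Rcal(q)$ part gives the left-hand side by $\delta$-separation of $\Rcal(q)$ (using $\delta<1/q$), while the $\Dcal(q)\times\Rcal(q)$ cross terms are killed by first rewriting each $v\in\Dcal(q)$ as an element of some $\Rcal(q')$ with $q'<q$ and then invoking $|a/q - a'/q'|\geq 1/(qq')>q^{-2}$. Your single observation that every $w\in\Fcal(q)$ and $u\in\Rcal(q)$ already lie in $\frac{1}{q}\Z$, so their difference mod $1$ is either $0$ or at least $1/q$, handles both pieces at once and shows that the weaker hypothesis $\delta<1/q$ already suffices for the identity; the stated $\delta\leq q^{-2}$ is not actually needed here. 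The paper's detour through reduced fractions mirrors the $q^{-2}$-separation arguments used elsewhere in the section, but for this lemma your route is the more economical one.
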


 \begin{proof}
 It is convenient to define $\Dcal(q)$ to be the set of reducible fractions with denominator $q$,  
 \[ \Dcal(q) = \{ a/q : 1 \leq a \leq q, (a,q)>1\}.\]
 Then   $\Fcal(q) = \Rcal(q) \sqcup \Dcal(q)$ as a disjoint union.  
Suppose we can verify two facts: first,
\beq\label{RdF_sum_factor}
 \sum_{u\in \Rcal(q)} \mu (\xi -u )\phi(\xi - u )= ( \sum_{u \in \Rcal(q) } \mu(\xi - u))\cdot ( \sum_{u'\in \Rcal(q)} \phi(\xi - u')),
 \eeq
 and second, 
\beq\label{RdF_sum_vanishes_0}
 ( \sum_{v \in \Dcal(q)} \mu (\xi - v))\cdot ( \sum_{u'\in \Rcal(q)} \phi(\xi -u')) =0.
\eeq
Then we could add (\ref{RdF_sum_vanishes_0}) to the right-hand side of (\ref{RdF_sum_factor}) and  use $\Fcal(q) = \Rcal(q) \sqcup \Dcal(q)$ to conclude the lemma holds.

To verify (\ref{RdF_sum_factor}) it  suffices to show that  the only nonvanishing terms on the right-hand side occur when $u=u'$; due to the support constraints of $\mu,\phi$ it thus suffices to show that the set $\Rcal(q)$ is $\del$-separated. This of course holds as long as $\del<1/q$.

To verify (\ref{RdF_sum_vanishes_0}), first observe that by identifying each ratio $a/q$ in $\Dcal(q)$ (as a point on the real line) with its associated reduced ratio,   each element of $\Dcal(q)$ lies in some set $\Rcal(q')$ with $q'<q$.
It  then suffices to prove that as long as $\del<  q^{-2}$,   any collection of intervals of length $\del$ centered at points in $\Union_{1 \leq q' < q} \Rcal(q')$  is disjoint from any collection of intervals of length $\del$ centered at points in $\Rcal(q)$. 
Assume on the contrary that two such intervals intersect; then there would be a point $\xi$ for which 
\[ |a/q - a'/q'| \leq |\xi - a/q| + |\xi - a'/q'| \leq \del/2 + \del/2,\]
with $(a,q)=1$ and $(a',q')=1$, where $q' < q$. But then $aq' - a'q$ is a nonzero integer, and it would follow that
\[ q^{-2} < (qq')^{-1} \leq |a/q-a'/q' |\leq \del,\]
 which is impossible, under the assumption that $\del \leq  q^{-2}.$
Thus for any fixed $\xi$, for every pair $v \in \Dcal(q), u' \in \Rcal(q)$ on the left-hand side of (\ref{RdF_sum_vanishes_0}) the supports of $\mu(\xi - v)$ and $\phi(\xi - u')$ are disjoint,   thus proving the identity. This proof holds for their periodization as well. 
\end{proof}

In the process of deducing Theorem  \ref{thm_RdF_diag_F_Lp} from Theorem \ref{thm_RdF_diag_F_Lp_smooth},
we apply Lemma \ref{lemma_factor} with $\mu=m$ the $L^{2r}$ multiplier and $\phi = \Psi$ a smooth multiplier. This allows us to replace the sum of  $T_u$ over $u \in \Rcal(q)$ in (\ref{2r_expansion}) by two operators, one summing $T_u$ over $\Fcal(q)$, which can be controlled by the method of sampling, and another summing a smooth multiplier operator over $\Rcal(q)$, which we control using Theorem \ref{thm_RdF_diag_F_Lp_smooth}. 

Precisely, fix $r \geq 1$. Recall that $\del< Z(q)^{-2}$ is fixed  and $m(\xi)$ is supported in $|\xi| \leq c_0 \del$, with $c_0< 1/2$. Choose a smooth function $\Psi$ supported in $(-1/2,1/2]$ and such that  $\Psi(\xi) = 1$ for $|\xi| \leq c_0$, so that $\Psi(\del^{-1}\xi )=1$ on the support of $m(\xi )$.  Since $\del < q(Z)^{-2}$, 
then for any  $q \in Z$, by the choice of $\Psi$ and Lemma \ref{lemma_factor},
\begin{eqnarray*}
  \sum_{u \in \Rcal(q)} m_{\mathrm{per}}(\xi - u)  
  & =& \sum_{u \in \Rcal(q)} m_{\mathrm{per}}(\xi - u)  \Psi_{\mathrm{per}}(\del^{-1}(\xi-u))\\
 	&=&   (\sum_{w \in \Fcal(q)} m_{\mathrm{per}}(\xi-w) ) (\sum_{u \in \Rcal(q)} \Psi_{\mathrm{per}}(\del^{-1}(\xi - u) )).
\end{eqnarray*}
By the method of sampling (Theorem \ref{thm_maxlp_same_denom}),   the operator with Fourier multiplier corresponding to $\sum_{w \in \Fcal(q)} m_{\mathrm{per}}(\xi-w)$ is bounded on $\ell^{2r}(\Z)$, with norm $C_{2r}$ \emph{independent} of $q, \del$. 
Let $S_u$ denote the discrete operator with smooth Fourier multiplier $\Psi_{\mathrm{per}}(\del^{-1}(\xi - u))$.
Applying this in (\ref{2r_expansion}), we see that
\[
   \|(\sum_{q \in Z}  | \sum_{u \in \Rcal(q)} T_{u} f |^{2r})^{\frac{1}{2r}}\|^{2r}_{\ell^{2r}(\Z)} 
 \leq C_{2r}^{2r}  \sum_{ q \in Z} \| \sum_{u \in \Rcal(q)} S_{u} f \|^{2r}_{\ell^{2r}(\Z)} = C_{2r}^{2r}\| ( \sum_{q \in Z} | \sum_{u \in \Rcal(q)} S_u f|^{2r})^{1/2r} \|_{\ell^{2r}(\Z)}^{2r}. \]
We then apply Theorem  \ref{thm_RdF_diag_F_Lp_smooth} to bound this by $C_r' \|f\|^{2r}_{\ell^{2r}}$,
and we have verified Theorem  \ref{thm_RdF_diag_F_Lp}.
	
 \subsubsection{Proof of Theorem  \ref{thm_RdF_diag_F_Lp_smooth}}
 The final remaining step is to prove Theorem \ref{thm_RdF_diag_F_Lp_smooth}.
We introduce the notation  that for a function $g(x,q)$ of $x \in \Z$ and $q \in Z$,   for $1 \leq p < \infty,$
\beq\label{RdF_mixed_norm_dfn}
 \| g(x,q) \|_{\ell^p(\Z \times Z)} = ( \sum_{x \in \Z} \sum_{q \in Z} |g(x,q)|^p)^{1/p},
 \eeq
 and for $p=\infty,$  
\[  \| g(x,q) \|_{\ell^\infty(\Z \times Z)} = \sup_{(x,q) \in \Z \times Z} |g(x,q)|.\]
Then the theorem claims that for each  $2 \leq p \leq \infty$,  
\beq\label{RdF_diag_p}
  \| \sum_{u \in \Rcal(q)} S_u f\|_{\ell^{p}(\Z \times Z)}
	\leq C_p(2^{\Omega(Z)})^{1-2/p} \| f\|_{\ell^{p}(\Z)}.
	\eeq
It therefore suffices to prove a bound  for $\ell^2(\Z \times Z)$ and for $\ell^\infty(\Z \times Z)$, and the intermediate cases follow by interpolation.

For the $\ell^2$ bound, first rewrite
\[ \| (\sum_{q \in Z}  |  \sum_{u \in \Rcal(q)} S_u|^{2})^{\frac{1}{2}}\|_{\ell^2(\Z)}^2=
\sum_{q \in Z}    \|   \sum_{u \in \Rcal(q)} S_u f\|_{\ell^2(\Z)}^2.
 \]
Applying the Parseval-Plancherel identity for each $q$ shows that this is equal to
\[ \sum_{q \in Z}  \|  \sum_{u \in \Rcal(q)} \Psi(\del^{-1}(\xi - u)) \hat{f}(\xi)\|_{L^2(\T)}^2
 =  \int_{(-1/2,1/2]}  | \hat{f}(\xi)|^2\sum_{q \in Z}| \sum_{u \in \Rcal(q)} \Psi(\del^{-1}(\xi - u))|^2d\xi.\]
Since $\del< q(Z)^{-2}$, by Lemma \ref{lemma_sep},  $\union_q \Rcal(q)$ is $\del$-separated (and is a disjoint union), so that for each fixed $\xi$ at most one term is present as $q,u$ vary. Thus we may conclude, after applying Plancherel again, that 
\[ \| (\sum_{q \in Z}  |  \sum_{u \in \Rcal(q)} S_u|^{2})^{\frac{1}{2}}\|_{\ell^2(\Z)}^2  \leq \| \Psi \|^2_{L^\infty(-1/2,1/2]} \| f \|_{\ell^2(\Z)}^2.\]
(Here we used the  $\del$-separation of the points, but no explicitly arithmetic properties.)
 
 For the $\ell^\infty$   bound, it suffices to show that uniformly in $q \in Z$, 
 \beq\label{S_infty}
   \|  \sum_{u \in \Rcal(q)} S_u  f \|_{\ell^\infty(\Z)}
	\leq C2^{\Omega(Z)} \| f\|_{\ell^{\infty}(\Z)},
	\eeq
	for a constant $C$ independent of $q$.  
 Note that if $q$ has prime factorization $q=p_1^{\al_1} \cdots p_k^{\al_k}$,  by Lemma \ref{lemma_split} the Fourier multiplier of this operator can be written as 
 \[ \sum_{u  \in \Rcal(q)} \Psi_{\mathrm{per}}(\del^{-1}(\xi - u)) = 
  \sum_{\ep_1, \ldots, \ep_k \in \{0,1\}} (-1)^{|\underline{\ep}|} \sum_{w \in \Fcal(p_1^{\al_1-\ep_1} \cdots p_k^{\al_k - \ep_k})} \Psi_{\mathrm{per}}(\del^{-1}(\xi-w)).
 \]
 There are $2^{\om(q)} \leq 2^{\Omega(Z)}$ terms on the right-hand side.
 Thus to prove (\ref{S_infty}) it suffices to prove that each of the terms on the right-hand side corresponds to an operator with uniformly bounded norm. That is, it suffices to prove that for any integer $Q \geq 1$,
 \[\| \sum_{u \in \Fcal(Q)} S_u f \|_{\ell^\infty(\Z)} \leq C\|f\|_{\ell^\infty(\Z)}.\]
  Let $K(n)$ denote the kernel of the operator $\sum_{u \in \Fcal(Q)} S_u$, so that it suffices to show that 
 \beq\label{K_bound}
  \sum_{n \in \Z} |K(n)| \leq C.
  \eeq

  There is an analogy to the use of Lemma \ref{lemma_RdF_exp_sum} to prove the $\ell^\infty$ case of Theorem \ref{thm_RdF_Psi}, but now instead of using $\del$-separation of a collection of points $\{\xi_j\}$ we can use precise arithmetic information (compare to Remark \ref{remark_Parseval}).
   We   compute that
 \begin{align*}
  K(n)  & = \sum_{u \in \Fcal(Q)} (\Psi_{\mathrm{per}} (\del^{-1}(\cdot - u)))\check{\;} (n)  \\
  & =   \sum_{u \in \Fcal(Q)} \int_{(-1/2,1/2]} \Psi  (\del^{-1}(\xi - u)) e^{2\pi i n \xi} d\xi\\
  & = ( \sum_{u \in \Fcal(Q)} e^{2\pi i n   u}) \cdot  \int_{\R} \Psi(\del^{-1}\xi) e^{2\pi i n \xi} d\xi  \\
  & = Q \onebf_Q(n) \cdot  \del (\Fscr^{-1}\Psi)(\del n) .
  \end{align*}
 We have applied first the support constraint of $\Psi$,  followed by the fact that 
 \[   \sum_{u \in \Fcal(Q)} e^{2\pi i n   u} = Q\onebf_{Q}(n)
 	 := \begin{cases} Q & \text{if $n \con 0 \modd{Q}$} \\
	 				0 & \text{otherwise}.\end{cases}\]
We see under the change of variables $n=Qm$ that 
\[ \sum_{n \in \Z}|K(n)|  = \sum_{\bstack{n \in \Z}{Q|n}}|K(n)|  = \sum_{m \in \Z}  Q\del |(\Fscr^{-1}\Psi)(\del Qm)|. \]
The last sum is uniformly bounded, so that (\ref{K_bound}) is confirmed. Indeed, since  $\Psi$ is $C^\infty$, its Fourier inverse has rapid decay, so that  $|(\Fscr^{-1}\Psi)(x)| \leq c(1+|x|)^{-M}$ for any $M \geq 1$ of our choice. Breaking the sum into a contribution from small $m$ and from large $m$, we see that each portion is bounded: 
\[ Q\del\sum_{|m| \leq (\del Q)^{-1}}   |(\Fscr^{-1}\Psi)(\del Qm)| \leq Q\del \sum_{|m| \leq (\del Q)^{-1}} O(1) = O(1), \]
and 
\[ Q\del \sum_{|m| > (\del Q)^{-1}}  |(\Fscr^{-1}\Psi)(\del Qm)| \leq Q\del \sum_{|m| > (\del Q)^{-1}} O((\del Q |m|)^{-M}) = O(1). \]
This completes the proof of Theorem \ref{thm_RdF_diag_F_Lp_smooth}, and hence also   the proof of 
the second converse inequality in Theorem \ref{thm_RdF_diag_F_Lp}.

 \subsection{Further remarks on the general setting}\label{sec_gen}
This concludes  our study of direct and converse inequalities for the discrete operator 
  \beq\label{Z_op}
   f \mapsto \sum_{a/q \in \Rcal(Z)} f_{a/q}
   \eeq
   in the case that $Z$ is a relatively prime set.
Our main result Theorem \ref{thm_discrete_main_goal}  was stated in terms of $\ell^{2r}$ bounds, with a constraint on $\ep$ depending on $r$ and the maximum size $q(Z)$ of an element in $Z$.    In the more general setting of Ionescu and Wainger, a parameter $0<\del_0<1$ is specified, and the goal is to construct a set $Z_N$  that is an enlargement of the set $\{1,\ldots, N\}$ with elements at most of size $e^{N^{\del_0}}$, such that the resulting operator (\ref{Z_op})  with $Z=Z_N$ is bounded on $\ell^p$ with norm at most $C_{p,\del_0}(\log N)^{2/\del_0},$ for each $1<p< \infty$. By specifying that $\ep< e^{-N^{2\del_0}},$ the constraint $\ep< r^{-1}q(Z_N)^{-1/2r}$ holds for all $r \geq 1$ (for $N$ sufficiently large relative to $r,\del_0$), and an $\ell^{2r}$ bound may be obtained for the operator. Then by interpolation the result of the theorem holds for all $2 \leq p  <\infty$; duality then shows the result also holds for $1 < p  \leq 2$. Thus the focus of Theorem \ref{thm_discrete_main_goal} on $\ell^{2r}$ bounds is not unduly restrictive.

One crucial aspect of the work of Ionescu and Wainger is that they can allow $\del_0 >0 $ to be arbitrarily small. (The case $\del_0 >1$ can be treated relatively simply.) To handle cases in which $\del_0$ is close to 0, Ionescu and Wainger construct a set   $Z_N$ that is a product set $Z_1 \cdots Z_s$ of relatively prime sets $Z_j$,   with the further property that all elements in $Z_j$ are relatively prime to all elements in $Z_{j'}$ if $j \neq j'$, and with $s$ on the order of $1/\del_0$. The setting of Theorem \ref{thm_discrete_main_goal} illustrates the special case $s=1$. When $s>1$, one  proves a direct inequality using an induction based on computations similar to those we exhibited here. The final direct inequality playing the role of Proposition \ref{prop_RdF_L{2r}_Sf} then   has $2^s$ terms that are various hybrids  of the two types of  terms we exhibited here. 
To prove the converse inequalities for the hybrid terms, one combines appropriately the methods of proof we illustrated here for the first and second converse inequalities, stated as Propositions \ref{prop_converse_{2r}_RdF} and \ref{prop_converse_{2r}}.

A final crucial aspect of the work of  Ionescu and Wainger is that simultaneously with the considerations above, they  must ensure that the factor $2^{\Om(Z_N)}$ appearing in the analogue of Proposition \ref{prop_converse_{2r}} does not exceed the allowed   norm $C_{p,\del_0}(\log N)^{2/\del_0}$. This is difficult, since $Z_N$ must be an enlargement of the set $\{1,\ldots,N\}$ and in particular can include integers with many distinct small prime divisors. Thus as a first step, Ionescu and Wainger separate out from $\Rcal( \{1,\ldots, N\})$ all fractions with denominators divisible by many small prime factors. These fractions  are, roughly speaking, carried along inside the multiplier to which the above method is applied, until the method of sampling is finally applied to also treat these terms. This aspect of the work of Ionescu and Wainger is very interesting from an arithmetic point of view, but the special case we focused on was designed to remove such considerations, in order to illuminate more simply the role that superorthogonality plays.

\section{A return to Type I superorthogonality: diagonal behavior}\label{sec_decoupling}

A refinement of Type I superorthogonality arises  naturally in a question related to decoupling. This refinement is the condition that for every $2r$-tuple $f_{n_1}, \ldots, f_{n_{2r}}$ of (complex-valued) functions from a sequence   $\{f_n\}$,
\beq\label{TIII_vanish'}
\int f_{n_1} \bar{f}_{n_2} \cdots f_{n_{2r-1}} \bar{f}_{n_{2r}}  =0
\eeq
as long as \\
{\bf Type I*:} the tuple $(n_1, n_3 ,\ldots , n_{2r-1})$ is not a permutation of the tuple $(n_2,n_4, \ldots, n_{2r})$.

 In particular, if a sequence $\{f_n\}$ is of Type I*, it is certainly of Type I. 
Under the Type I condition, a direct inequality holds for any   sequence $\{f_n\}$, by the argument given in \S \ref{sec_TypeI} (suitably modified for complex-valued $f_n$). But for Type I* this argument can be refined to show that for any integer $r \geq 1$,  
 \beq\label{TIII_arg}
 \| (\sum_n |f_n|^2)^{1/2} \|_{L^{2r}}^{2r} \leq  \| \sum_{n} f_n \|_{L^{2r}}^{2r}  \leq r!  \| (\sum_n |f_n|^2)^{1/2} \|_{L^{2r}}^{2r}.
 \eeq
Indeed, after expanding the middle term, (\ref{TIII_vanish'}) shows that the only nonvanishing terms can be written as
\[  \int  (\sum_n f_n)^r (\sum_{n'} \bar{f}_{n'})^r 
= \sum_{(a_1,\ldots,a_s)} C(a_1,\ldots,a_s)^2 \int  |f_{n_1}|^{2a_1} \cdots |f_{n_s}|^{2a_s},\]
in which the sum on the right-hand side is over all $s \leq r$, all pairwise distinct $n_1,\ldots, n_s$ in the (finite) index set, and all $(a_1,\ldots,a_s)$ with $a_1 + \cdots + a_s = r$. Here, as before, $C(a_1,\ldots,a_s) = (a_1+\cdots+a_s)!/( a_1! \cdots  a_s!)$.
 On the other hand, 
 \[  \int  (\sum_n |f_n|^2)^r  
= \sum_{(a_1,\ldots,a_s)} C(a_1,\ldots,a_s) \int  |f_{n_1}|^{2a_1} \cdots |f_{n_s}|^{2a_s},\]
in which the sum varies over the same parameters as described above.
The claim follows, since $\max_{(a_1,\ldots,a_s)} C(a_1,\ldots,a_s) \leq r!$ and $\min_{(a_1,\ldots,a_s)} C(a_1,\ldots,a_s) \geq 1$.

 \subsection{An extension operator associated to a nondegenerate curve}
 In this section we demonstrate a family $\{f_n\}$ with Type I* superorthogonality that relates to   both harmonic analysis and number theory.
Let $\ga:[0,1] \maps \R^n$ be a nondegenerate curve, that is,  
\[ \det (\ga'(t), \ga''(t), \ldots, \ga^{(n)}(t)) \neq 0 \quad \text{for every $t \in [0,1]$.}\]
The prototypical example is the moment curve, $\ga(t) = (t,t^2, \ldots, t^n)$. 
We may associate to the curve $\ga$ an extension operator that maps functions on an interval $I \subset [0,1]$ to functions of $\R^n$, by 
\[ E_I f(x) = \int_I e^{2\pi i x \cdot \ga(t)} f(t) dt.\]
Then $E_If$ can be thought of as a function whose Fourier transform is supported as a distribution on a portion of the curve $\{ \ga(t): t \in [0,1]\}$.

Suppose that we dissect $[0,1]$ into a set of disjoint intervals $\{I_n\}_n$. Then we can ask whether the sequence of functions $\{E_{I_n} f\}_n$ satisfies a direct inequality in $L^p$; if it does, this also implies an $\ell^2 L^p$ decoupling inequality (in an appropriate range of $p$). 
This setting has been  considered in recent work of Gressman, Guo, Roos, Yung and the author \cite{GGPRY19x}.
As explained there, to state the relevant direct inequality precisely (so that both sides may be finite),  we must  define the $L^p$ norms with an appropriate weight. There are many choices for such a weight (with comparable outcomes in the setting of decoupling, see e.g. \cite[Lemma 4.1]{BouDem17a}), and we will make a particularly convenient choice here.

 Let $\phi$ be a  non-negative Schwartz function on $\R^n$ with the property that $\phi \geq 1$ on the unit ball centered at the origin, and $\widehat{\phi}$ is supported on the unit ball centered at the origin (see the proof of Lemma \ref{lemma_RdF_exp_sum} for a construction). 
Define $\phi_R(x) = \phi(R^{-n}x)$, and define the weighted norm
\[ \|f\|_{L^p(\phi_R)} = ( \int_{\R^n} |f(x)|^p \phi_R(x) dx)^{1/p}.\]
We may think of this weighted norm as capturing  the average behavior of $f$ over at least a ball of radius $R^n$;
the weight $\phi_R(x)$ has the effect of ``blurring'' the support of the Fourier transform of $|f(x)|^p$ on the scale of an $O(R^{-n})$ neighborhood; see e.g. \cite[\S 8.1.3]{Pie19}.

 The main result of Guo, Gressman, Pierce, Roos and Yung \cite{GGPRY19x} is the following direct inequality:   there exists a constant $C(\ga,n) \in (0,\infty)$ such that for each integer $1 \leq r \leq n$, for  every $R \geq 1$, for every $f \in L^{2r}(\phi_R)$,
\beq\label{TIII_direct}
 \|E_{[0,1]}f\|_{L^{2r}(\phi_R)} \leq C(\ga,n) \| ( \sum_{|I| = R^{-1}}|E_I f|^2)^{1/2} \|_{L^{2r}(\phi_R)}.
 \eeq
Here the summation is over the intervals $I$ in a dissection of $[0,1]$ into subintervals   of length $R^{-1}$. 

 For any fixed $r \geq 1$, such a direct inequality is stronger than the corresponding $\ell^2 L^{2r}$ decoupling inequality, which would replace the norm on the right-hand side by   $( \sum_{|I| = R^{-1}}\|E_I f \|_{L^{2r}(\phi_R)}^2)^{1/2}$ (see e.g. \cite[\S 5.3.2]{Pie19} for a formal comparison). The celebrated work \cite{BDG16} shows that the $\ell^2 L^{2r}$ decoupling inequality is valid in the much larger range of any real $r \leq n(n+1)/2$. See further remarks below, on barriers to extending the current proof of the direct inequality to values $r>n$.

 \subsection{Type I* superorthogonality for   extension operators}
An advantage of the direct inequality (\ref{TIII_direct}) for integers $r \leq n$ is its comparatively simple proof.
Fundamentally, the argument is an application of Type I* superorthogonality: if the Type I* condition holds for  functions $\{ E_I f\}_I$ as $I$ varies over a finite set $\Ical$, then (\ref{TIII_arg}) shows that
 \beq\label{TIII_direct'}
  \|\sum_{I \in \Ical} E_I f \|_{L^{2r}(\phi_R)}  \leq C_r \| ( \sum_{I \in \Ical}E_I f|^2)^{1/2} \|_{L^{2r}(\phi_R)}.
  \eeq
  From this inequality, certain reductions (reviewed momentarily) show that (\ref{TIII_direct}) holds.
  
In this weighted context, the Type I* criterion is the statement that for any tuple $(I_1,I_2,\ldots, I_{2r})$ of intervals in the collection $\Ical$, 
  \beq\label{TIII_cond_ex}
   \int_{\R^n} E_{I_1} f(x) \overline{E_{I_2} f}(x) \cdots E_{I_{2r-1}}f (x) \overline{E_{I_{2r}}}f(x) \phi_R(x) dx=0 
   \eeq
  unless $(I_1, I_3, \ldots, I_{2r-1})$ is a permutation of $(  I_2,I_4, \ldots, I_{2r})$.
  Upon expanding the definition of the extension operators, the integral (\ref{TIII_cond_ex}) is identical to the expression
  \[
   \int_{I_{\mathrm{odd}}} \int_{I_{\mathrm{even}}} R^{n^2} \widehat{\phi}\left( R^n \sum_{i=1}^r (\ga (t_i) - \ga(s_i))\right) f(t_1) \cdots f(t_r) \overline{f(s_1) \cdots f(s_r)} dt_1 \cdots dt_r ds_1 \cdots ds_r;\]
here we let $(t_1,\ldots, t_r) \in I_{\mathrm{odd}} := I_1 \times I_3 \times \cdots \times I_{2r-1}$, and analogously for $(s_1,\ldots, s_r) \in I_{\mathrm{even}}:= I_2 \times I_4 \times \cdots \times I_{2r}$.
This integral will vanish unless $R^n \sum_{i=1}^r (\ga (t_i) - \ga(s_i))$ lies in the support of $\widehat{\phi}(\xi)$, which requires that
\beq\label{TIII_squeeze}
| \sum_{i=1}^r (\ga (t_i) - \ga(s_i)) | \leq R^{-n}.
\eeq
The central technical result of \cite[Prop. 1.3]{GGPRY19x}  is that nondegeneracy of the curve $\ga$ guarantees that  for each integer $1 \leq r \leq n$, as long as the intervals in $I \in \Ical$ are sufficiently well-spaced, (\ref{TIII_squeeze}) can only occur if $(I_1, I_3,\ldots, I_{2r-1})$ is a permutation of $(I_2,I_4,\ldots, I_{2r})$. This verifies that the Type I* condition holds as long as the intervals $I \in \Ical$ are sufficiently well-spaced.

For completeness, we recall a few details of this result, to confirm that it reduces the full proof of (\ref{TIII_direct}) to the case of (\ref{TIII_direct'}).
  Precisely, the nondegeneracy of $\ga$ guarantees the 
existence of constants $\del_0(\ga,n) \leq 1$ and $c_0(\ga,n) \geq 10$ (with $\del_0^{-1},c_0 \in \Z$) such that for any collection $\Ical$ of intervals from a dissection of $[0,1]$ into pairwise disjoint intervals of length $R^{-1}$ with the property that
\beq\label{TIII_collection}
\mathrm{dist}(I,I') \geq c_0(\ga,n)R^{-1} \quad \text{for $I \neq I ' \in \Ical$},
	\quad \text{and} \quad \mathrm{diam}(\Union_{I \in \Ical} I) \leq \del_0(\ga,n),
\eeq
then (\ref{TIII_squeeze}) can only hold if 
$(t_1,\ldots, t_r) \in I_{n_1} \times \cdots \times I_{n_r}$ and 
$(s_1,\ldots, s_r) \in I_{n_1}' \times \cdots \times I_{n_r}'$
where $(I_{n_1},\ldots, I_{n_r})$ is a permutation of $(I_{n_1}',\ldots, I_{n_r}')$.
We conclude that for any  collection $\Ical$  of intervals satisfying (\ref{TIII_collection}), the integral (\ref{TIII_cond_ex}) vanishes unless 
$(I_1, I_3, \ldots, I_{2r-1})$ is a permutation of $(  I_2,I_4, \ldots, I_{2r})$.  
Thus to prove (\ref{TIII_direct}), one begins with a dissection of $[0,1]$ into intervals of length $R^{-1}$, and then   cuts this dissection into $\leq \del_0^{-1}$ subcollections, each lying in a subinterval of length at most $\del_0$. Then one cuts each such subcollection further, taking every $(c_0 +1)$-th interval in the subcollection to make one of the desired collections $\Ical$ to which we can apply (\ref{TIII_direct'}). This verifies (\ref{TIII_direct}) with the constant $(c_0+1) \del_0^{-1}$.

\subsection{Further remarks: diagonal vs. off-diagonal solutions}
What about direct inequalities like (\ref{TIII_direct}) in $L^{2r}(\phi_R)$ for a curve in $\R^n$, for $r> n$? 
The essential ingredient in the argument above is that for integers $r \leq n$ and a non-degenerate curve $\ga :[0,1] \maps \R^n$,
 the $2r$-iterated system of equations
\beq\label{TIII_sys}
 \ga(x_1) + \ga(x_3) + \cdots + \ga(x_{2r-1}) = \ga(x_2) + \ga(x_4) + \cdots + \ga(x_{2r}) 
 \eeq
has the property that its only solutions (or near solutions) are ``essentially diagonal,'' in the sense that $x_1, x_3, \ldots, x_{2r-1}$ is a permutation of $x_2, x_4, \ldots, x_{2r}$, or at least very nearly. 
 This can fail to be true for $r>n$. 
 
 We can gain an intuition for this obstacle from the case of the moment curve $\ga(t) = (t,t^2,\ldots, t^n) \subset \R^n$
 by studying integral solutions to the system of equations (\ref{TIII_sys}). This is the Vinogradov system of degree $n$ in $2r$ variables,
 \[ x_1^d + x_3^d + \cdots + x_{2r-1}^d = x_2^d + x_4^d + \cdots + x_{2r}^d, \qquad 1 \leq d \leq n.\]
Integral solutions with $1 \leq x_i \leq X$ correspond to $X^{-1}$-separated points  on $\ga$. 
 The Vinogradov system is known to have only diagonal integral solutions as long as $r \leq n$; for all large $n$ it is an open problem to determine the  least $r>n$ for which an off-diagonal integral solution  exists (the Prouhet-Tarry-Escott problem). 
 For $ 1 \leq n \leq 9$ and $n=11$, an off-diagonal integral solution has been exhibited for $r=n+1$. 
 Moreover it is  known that for $r>n$, as soon as one off-diagonal integral solution with all $|x_i| \ll X$ exists, at least $\gg X^2$ off-diagonal integral solutions exist. Thus  if a direct inequality in $L^{2r}$ (if true)  is to be obtained  for some $r>n$,   the method of proof must be able to accommodate a profusion of off-diagonal solutions.
We refer to \cite[\S 2]{GGPRY19x} for details, and a summary of literature related to counts for off-diagonal integral solutions.

\section{Quasi-superorthogonality and trace functions}\label{sec_trace}

We now introduce the notion of quasi-superorthogonality: we no longer assume that the integral of
$  f_{n_1} \bar{f}_{n_2} \cdots f_{n_{2r-1}} \bar{f}_{n_{2r}}  
$
vanishes, but   that it exhibits cancellation relative to a trivial bound. 
This will  lead us to central questions in number theory.

Let $(M,\mu)$ be a finite-measure space and denote $|M| = \mu(M) \geq 1$. Let $\{f_n\}$ be a sequence of uniformly bounded functions with finite index set $\Ical$; for simplicity we assume that $\|f_n\|_{L^\infty} \leq 1$ for all $n$. We suppose that  there is a real number $0<\nu<1$ such that  for every $r \geq 1$, for every $2r$-tuple of functions in  $\{f_n\}$,
\beq\label{TIV_vanish}
|\int_M   f_{n_1} \bar{f}_{n_2} \cdots f_{n_{2r-1}} \bar{f}_{n_{2r}} d\mu|  \leq C_r|M|^{\nu}
\eeq
as long as:  \\
{\bf Type I quasi-superorthogonality:} the tuple  $(n_1,n_2,\ldots, n_{2r})$ has the property that some value $n_j$ appears an odd number of times.\\
{\bf Type II quasi-superorthogonality:} the tuple  $(n_1,n_2,\ldots, n_{2r})$ has the property that some value $n_j$ appears precisely once (the uniqueness property).  
 
We no longer hope to prove a direct inequality, as such. Instead,  we expand the norm
\[  \|\sum_{n \in \Ical} f_n \|_{L^{2r}(M,\mu)}^{2r}
	= \sideset{}{^*}\sum_{(n_1, \ldots, n_{2r}) \in \Ical^{2r}} \int_M   f_{n_1} \bar{f}_{n_2} \cdots f_{n_{2r-1}} \bar{f}_{n_{2r}} d\mu
		 + \sideset{}{^{**}}\sum_{(n_1, \ldots, n_{2r}) \in \Ical^{2r}}   \int_M   f_{n_1} \bar{f}_{n_2} \cdots f_{n_{2r-1}} \bar{f}_{n_{2r}} d\mu ,
	\]
	in which the first sum is over those tuples with the uniqueness property and the second sum is over those tuples without the uniqueness property (so at most $r$ distinct values appear in each such tuple).
	Suppose that $\{f_n\}$ has Type I or  Type II quasi-superorthogonality with parameter $\nu$.
Then the first term may be bounded above by $C_r |\Ical|^{2r}  |M|^\nu$ by  (\ref{TIV_vanish}), while the second term
is controlled by a direct inequality, by the argument of \S \ref{sec_TI_direct}. Thus
\beq\label{approx_direct}
  \|\sum_{n \in \Ical} f_n \|_{L^{2r}}^{2r} \leq C_r |\Ical|^{2r}  |M|^\nu + C_r' \| ( \sum_{n \in \Ical} |f_n|^2)^{1/2}\|_{L^{2r}}^{2r}
  \ll_r |\Ical|^{2r} |M|^{\nu} + |\Ical|^{r} |M|.
\eeq
	This should be compared to the trivial upper bound $|\Ical|^{2r}|M|$.
In particular,   to minimize the right-hand side over the cardinality of $\Ical$, we would consider a set of indices $\Ical$ with  
	\beq\label{optimize}
	|\Ical| = |M|^{\frac{1-\nu}{r}}.
	\eeq
If $\Ical$ is of this size, then we obtain the bound 
\[  \|\sum_{n \in \Ical} f_n \|_{L^{2r}(M,\mu)} \ll_r |M|^{\frac{2-\nu}{2r}}.
\]
 This is better than  the trivial bound $|\Ical| |M|^{1/2r}$ as long as $|\Ical|$ is at least an order of magnitude larger than $|M|^{\frac{1-\nu}{2r}}$, which certainly is true under our assumption on the size of $\Ical$.

We will now describe an important setting in which quasi-superorthogonality arises:  trace functions,  which are ubiquitous in number theory.  
In this section, we demonstrate that trace functions exhibit  quasi-superorthogonality,  due to a theory built up recently in great generality  by   Fouvry, Kowalski and  Michel  \cite{FKM15} (see also \cite{FKMS19}), building on ideas of N. Katz and using the truth of the Riemann Hypothesis over finite fields, due to Deligne. In Section \ref{sec_Burgess_method} we then demonstrate that quasi-superorthogonality provides a clear way to motivate a proof of Burgess's celebrated bound for short multiplicative character sums.

\subsection{Trace functions }
While harmonic analysis nucleated around the study of periodic functions on the unit circle, analytic number theory nucleated around properties of functions of period $q$, where $q$ is a fixed integer (often prime).  
For example, such functions arise because of the ubiquity of the Fourier transform on the group $\Z/q\Z$, which introduces the functions $x \maps  e^{2\pi i a x/q}$ for   $1 \leq a \leq q$. Functions of period $q$ also arise in many other ways, for example in sieve methods, which  (roughly speaking) test whether a property holds ``globally'' in $\Z$ by testing whether it holds ``locally'' modulo $q$ for many primes $q$. 
Functions  of period $q$ are also closely intertwined with the important role played by congruences $n \con a \modd{q}$, for example in Dirichlet's theorem on the infinitude of primes in arithmetic progressions. This leads  to other well-known examples of $q$-periodic  functions such as the   Legendre symbol $x \mapsto \Leg{x}{q}$, or more generally any multiplicative Dirichlet character $x \maps \chi(x)$, a homomorphism  of the multiplicative group $(\Z/q\Z)^* \mapsto \C^*$ (if nontrivial, extended to act on $\Z/q\Z$ by setting $\chi(x)=0$ if $(q,x)>0$). One can also consider $x \mapsto e^{2\pi i x^{-1}/q}$ or $x \mapsto e^{2\pi i (x^{-1} + x)/q}$, in which $x^{-1}$ denotes the multiplicative inverse of $x$ modulo the prime $q$. 

All the functions $x \mapsto F(x)$ mentioned above are examples of trace functions. A few more examples of trace functions include normalized Gauss and Kloosterman sums such as 
\[ x \mapsto \frac{1}{q^{1/2}} \sum_{x \in \F_q} e^{2\pi i a x^2/q}, \qquad x \mapsto \frac{1}{q^{1/2}} \sum_{\bstack{x,y \in \F_q^*}{xy = a}} e^{2\pi i (x+y)/q},
\]
or hyper-Kloosterman sums. Moreover, certain procedures applied to appropriate trace functions  produce more trace functions, such as addition of trace functions, multiplication, ``convolution'' and taking the Fourier transform (due to Laumon, see \cite[Thm. 6.6]{FKMS19}). 

The fully general definition of a trace function (associated to an appropriate $\ell$-adic sheaf) would take us too far afield,
and instead we ask the reader to keep   the above examples in mind.
For the full definition of trace functions, an extensive overview of technical results, and many far-reaching applications, we refer to the excellent ``Lectures on applied $\ell$-adic cohomology'' of Fouvry, Kowalski, Michel and Sawin \cite{FKMS19}, which we will cite as we illustrate the connection to superorthogonality. (See in particular   \cite[Dfn. 3.5]{FKMS19} for a general definition.)

\subsection{Complete sums}

One of the most useful properties of trace functions is  a ``square-root cancellation'' upper bound for the sum of a trace function $F(x)$ over all elements in $\F_q$ for a prime $q$. 
We state this as: if $F$ is a trace function associated to an appropriate $\ell$-adic sheaf (precisely, weight 0 and geometrically irreducible or isotypic with no trivial component) then  
\beq\label{sqrt}
 \sum_{x \in \F_q} F(x) \ll C_F q^{1/2},
 \eeq
where $C_F$ is the conductor of $F$. 
This is a consequence of the Grothendieck-Lefschetz trace formula \cite[Thm. 4.1]{FKMS19} and Deligne's resolution of the Weil Conjectures \cite{Del80}, verifying  the Generalized Riemann Hypothesis over finite fields \cite[Thm. 4.6]{FKMS19}.  
This may be compared to the  ``trivial'' $O(q)$ upper bound  that holds  because the values of  a (weight 0)  trace function $F(x)$ associated to $\F_q$ are uniformly bounded (by the rank of the trace function).  
 (In \cite{FKMS19}, see Definition 3.1 for the rank, Definition 3.10 for the uniform upper bound for $F(x)$, Definition 4.3 for the conductor,  Corollary 4.7 for the statement of square-root cancellation as in (\ref{sqrt}), and \S 3.4 for the notions of geometrically irreducible and isotypic. Remark 3.11 indicates why the weight 0 case is sufficiently general.)

It is hard to overstate the importance of trace functions as tools in analytic number theory.
The Weil-Deligne bound  (\ref{sqrt}) is of critical importance in many proof techniques. But another type of sum is also often unavoidable in analytic methods in number theory: an incomplete sum of the form 
\[ \sum_{x \in \Ical} F(x) \]
where $\Ical$ is a proper subset of $\F_q$. 
When $\Ical$ is an interval, identified with $[a,b] \subsetneq [1,q]$, such incomplete sums can be further divided into two types: ``long sums'' in which $|\Ical| \gg q^{1/2}$ and the more difficult ``short sums'' in which $|\Ical| \ll q^{1/2}$. 
Our goal in the remainder of the paper is to show that quasi-superorthogonality of trace functions is a natural language in which to frame the current best method for bounding short sums, the Burgess method. First we must explain why quasi-superorthogonality holds.

\subsection{Quasi-superorthogonality of trace functions}
  Let $q$ be a prime. Let an element $\ga \in \mathrm{PGL}_2(\F_q)$
act on $x \in \F_q$ by fractional linear transformation, denoted by $x\mapsto \ga \cdot x$. Consider for a trace function $F$ associated to $q$ the sum
\beq\label{multi}
 \sum_{x \in \F_q} F(\ga_1 \cdot x) \overline{ F}(\ga_2 \cdot x)\cdots  F(\ga_{2r-1} \cdot x) \overline{ F}(\ga_{2r} \cdot x)   e^{2\pi i xh/q} 
 \eeq
where $\ga_1, \ldots, \ga_{2r} \in  \mathrm{PGL}_2(\F_q)$ and $h \in \F_q$. 
If   all the $\ga_i$ occur in pairs, we might not expect cancellation to occur in the sum. For example, suppose that $h=0$ and $F(x) = \chi(x)$ is a non-principal Dirichlet character modulo $q$; then if   $\ga_{2i-1} = \ga_{2i}$ for each $i=1,\ldots, r$, the sum would evaluate to $q$.
  But otherwise, we might hope that significant cancellation occurs. 
 Recently a broad set of results of this type has been codified by Fouvry, Kowalski and Michel \cite{FKM15}. 
We expose here that their results precisely fit the notion of quasi-superorthogonality for the sequence of functions $\{F(\ga_n \cdot \; )\}_n$. 
  
The key result  is as follows:  appropriate  trace functions $F$ associated to $q$
have the property that the sequence $\{F(\ga_n \cdot \; )\}_n$ with $\ga_n \in \mathrm{PGL}_2(\F_q)$ satisfies Type I quasi-superorthogonality with parameter $\nu=1/2$. Thus the approximate direct inequality (\ref{approx_direct}) holds with $M = \F_q$ and $|M|=q$. 

We can state this precisely in terms of a weighted Type I quasi-superorthogonality condition:   
 for all $h \in \F_q$,
\beq\label{superK}
|\sum_{x \in \F_q} F(\ga_1 \cdot x) \overline{ F}(\ga_2 \cdot x) \cdots  F(\ga_{2r-1} \cdot x) \overline{ F}(\ga_{2r} \cdot x)   e^{2\pi i xh/q} |	\ll_{r,C_F} q^{1/2},
\eeq
 as long as at least one fractional linear transformation $\ga_i$ appears an odd number of times in the tuple $(\ga_1,\ga_2,\ldots, \ga_{2r})$. 
 This holds for $F$ being a Dirichlet character, as well as normalized Gauss sums and Kloosterman sums, and other familiar trace functions. 
The precise details for the general setting can be found  in  \cite{FKM15} and \cite[\S 14]{FKMS19}, in which  the left-hand side of (\ref{superK}) is called a multicorrelation sum.  
More general forms of the relation (\ref{superK}) also hold, in which the trace function $F$ can itself vary from factor to factor, as explained in \cite{FKM15,FKMS19}.
The condition that the tuple $(\ga_1,\ga_2,\ldots, \ga_{2r})$ has an entry that occurs an odd number of times  was called an ``ad hoc'' definition in \cite[Dfn. 14.2]{FKMS19}; it is very pleasing that we now see it is precisely the Type I condition.

The result (\ref{superK}) is very deep; we will return momentarily to the underlying reasons it holds.
First, we highlight  an important special case, known since the 1940's, which will play a critical role in the next section on the Burgess method.

\subsection{The case of Dirichlet characters}
 Let us specify that  $F=\chi$ is a multiplicative Dirichlet character of order $\Del$ modulo a prime $q$, $h=0$, and each $\ga_i \cdot x = x+n_i$ for some $n_i \in \F_q$. Then (\ref{multi}) takes the form 
\beq\label{f_sum}
 \sum_{x \in\F_q}\chi((x+n_1)(x+n_2)^{\Del-1} \cdots (x+n_{2r-1})(x+n_{2r})^{\Del-1}) = \sum_{x \in \F_q} \chi (f_{\underline{n}}(x)),
 \eeq
say. This sum is trivially bounded above by $q$. In this special case, the square-root cancellation bound  (\ref{superK}) was known already to Weil, as a consequence of Weil's proof of the Riemann hypothesis for curves \cite{Wei41}; for a more recent source, see e.g. \cite[Thm. 11.23, Cor. 11.24]{IK}. We record the Weil bound explicitly, and then show that it verifies Type II quasi-superorthogonality for the set of functions $\{\chi( \cdot + n )\}_n$.
\begin{lemma}[Weil bound]
Let $\chi$ be a non-principal multiplicative Dirichlet character of order $\Del$ modulo a prime $q$. Then for any polynomial $f \in \Z[t]$ that has $m$ distinct roots and cannot be written as $f(t) = c h(t)^\Del$ for some $h \in \overline{\F}_q[t]$, 
\beq\label{Weil}
 |\sum_{x \in \F_q} \chi (f(x))| \leq (m-1)q^{1/2}.
 \eeq
\end{lemma}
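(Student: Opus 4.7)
The plan is to deduce the bound from Weil's Riemann Hypothesis for curves over finite fields, applied to the Kummer cover $C \colon y^{\Delta} = f(x)$ of the affine line over $\F_q$. The first step is to relate the character sum to a point count. For $a \in \F_q^*$ one has $\#\{y \in \F_q \colon y^{\Delta} = a\} = \sum_{\psi^{\Delta}=1} \psi(a)$, the sum running over the $\Delta$ multiplicative characters of order dividing $\Delta$. Summing over $x \in \F_q$, with the usual convention $\psi(0)=0$ for non-principal $\psi$, yields
\[ \#C(\F_q) = q + \sum_{\psi \neq 1,\; \psi^{\Delta}=1} \sum_{x \in \F_q} \psi(f(x)), \]
so the target sum $\sum_x \chi(f(x))$ appears as the $\psi = \chi$ summand.

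The second step is to invoke Weil's theorem. The hypothesis that $f$ is not of the form $c \cdot h^{\Delta}$ in $\overline{\F}_q[t]$ is exactly the Kummer-theoretic condition for $Y^{\Delta} - f(X)$ to be irreducible over $\overline{\F}_q(X)$; hence $C$, and its smooth projective model $\tilde{C}$, are geometrically integral. Weil's theorem then gives $|\#\tilde{C}(\F_q) - (q+1)| \leq 2g\, q^{1/2}$, and $\#\tilde{C}(\F_q) - \#C(\F_q) = O(1)$, controlled by the number of points at infinity and singularities. Combined with the first step, this bounds the \emph{full} sum $\sum_{\psi \neq 1,\, \psi^{\Delta}=1} \sum_x \psi(f(x))$, but not the individual contribution corresponding to $\chi$.

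To obtain the sharp constant $(m-1)$ for a single character, the third step refines the above via the factorization of the zeta function of $\tilde{C}$ under the cyclic Galois action of the cover $\tilde{C} \to \mathbb{P}^1$. One has
\[ Z(\tilde{C}, T) = \frac{1}{(1-T)(1-qT)} \prod_{j=1}^{\Delta - 1} L(T, \chi^j, f), \]
where each factor $L(T, \chi^j, f)$ is a polynomial whose reciprocal roots have absolute value $q^{1/2}$; equivalently, this is Weil's bound applied to the $\chi^j$-isotypic piece of the $\ell$-adic $H^1(\tilde{C})$. Its degree equals $\dim H^1_{\chi^j}$, and an Euler--Poincar\'e computation for the Kummer sheaf $\mathcal{L}_{\chi \circ f}$ on $\mathbb{P}^1$ punctured at the zeros of $f$ and at $\infty$ shows that $\deg L(T, \chi, f) \leq m - 1$. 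Extracting the coefficient of $T$ then gives the claim. The main obstacle is precisely this degree count: the raw genus bound only supplies $\sum_j \deg L(T, \chi^j, f) = 2g \asymp (\Delta-1)(m-1)$, so isolating the clean constant $m-1$ for a single character --- a constant that depends only on the number of distinct roots of $f$, with no dependence on $\Delta$ --- requires the cohomological (or equivalently $L$-function) decomposition rather than a direct application of Weil to $\tilde{C}$.
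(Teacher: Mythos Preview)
The paper does not prove this lemma; it records it as a classical consequence of Weil's Riemann Hypothesis for curves and cites \cite{Wei41} and \cite[Thm.~11.23, Cor.~11.24]{IK}. Your outline is the standard route to that citation, and the architecture --- relate the sum to the $\chi$-isotypic piece of $H^1$ of the Kummer cover, invoke Weil to bound the Frobenius eigenvalues by $q^{1/2}$, and count eigenvalues via Euler--Poincar\'e to get the constant $m-1$ --- is correct.

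One technical slip: the assertion that ``$f$ is not of the form $c\,h^{\Delta}$'' is \emph{exactly} the Kummer condition for $Y^{\Delta}-f(X)$ to be irreducible over $\overline{\F}_q(X)$ is false as stated. Irreducibility requires that $f$ not be a $d$-th power for any divisor $d>1$ of $\Delta$ (e.g.\ $\Delta=4$, $f=g^2$ with $g$ not a square gives a reducible $Y^4-f$). This does not damage the argument, because the $L$-function $L(T,\chi,f)$ is defined and analyzed directly without needing geometric integrality of the full cover: the hypothesis $f\neq c\,h^{\Delta}$ is exactly what ensures the Kummer sheaf $\mathcal{L}_{\chi\circ f}$ is nontrivial, hence $H^0=H^2=0$, hence $L(T,\chi,f)$ is a polynomial. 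Your degree bound $\deg L\leq m-1$ then follows from the tame Euler--Poincar\'e formula on $\mathbb{P}^1$ (tameness holds since $\Delta\mid q-1$), with ramification supported on the distinct zeros of $f$ and at $\infty$. So the sketch is sound once you drop the irreducibility claim and work with the sheaf directly.
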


To verify Type II quasi-superorthogonality, let $(n_1,\ldots,n_{2r})$ be a fixed tuple and define the polynomial $f_{\underline{n}}(x)$ as in (\ref{f_sum}).  If $f_{\underline{n}}$ is a constant multiple of a $\Del$-th power of a polynomial over $\overline{\F}_q$ then it also is over $\F_q$ (see for example \cite[Lemma 3.1]{PieXu19}). If the tuple $(n_1,\ldots, n_{2r})$ has the uniqueness property then some root of  $f_{\underline{n}}$ either appears only once or only $\Del-1$ times, and hence $f_{\underline{n}}$ cannot be a $\Del$-th power over $\F_q$, so by the lemma we have 
 \beq\label{Dir_sum}
| \sum_{x \in \F_q} \chi(x+n_1) \overline{\chi}(x+n_2) \cdots \chi(x+n_{2r-1}) \overline{\chi}(x+n_{2r}) | \leq (2r-1) q^{1/2}.
\eeq

This verifies that the sequence of functions $\{ \chi(\cdot + n)\}_n$, where $n$ varies over any finite set $\Ical$ of integers, satisfies Type II quasi-superorthogonality with parameter $\nu=1/2$,
 and the approximate direct inequality (\ref{approx_direct}) holds.
For later reference, we record this  in the case that $\Ical$ is the set of integers in an interval $(k_1,k_2]$:
 \beq\label{approx_direct_chi}
  \|\sum_{n \in (k_1,k_2]} \chi(\cdot + n) \|^{2r}_{\ell^{2r}(\Z/q\Z)}   \ll_r (k_2-k_1)^{2r} q^{1/2} +(k_2-k_1)^{r}q.
\eeq

\subsection{The source of quasi-superorthogonality}

The  source of the quasi-superorthogonality   exhibited in (\ref{superK}) is very interesting in its own right, and further exemplifies the universality of ``exact'' superorthogonality, in the meaning (\ref{super_gen_super_id}) considered earlier in this paper. To see this, let us consider in general the role of the Riemann Hypothesis over finite fields for studying a sum of the form 
\beq\label{Fsum} \sum_{x \in \F_q} F_1( x) \overline{ F}_2( x) \cdots  F_{2r-1}(  x) \overline{ F}_{2r}( x) e^{2\pi i hx/q},
\eeq
for appropriate trace functions $F_1, \ldots, F_{2r}$. 
We will be  informal here, and refer to the rigorous statements of  \cite[Proposition 1.1]{FKM15} for details.
(Strictly speaking, one first restricts the sum over $x \in \F_q$ to a  certain subset $U(\F_q) \subset \F_q$ upon which the $F_i$ are suitably well-behaved (``unramified,'' see Appendix B). In the cases of interest, the sum over $x \in U(\F_q)$ differs from the sum over $x \in \F_q$ by an error term that depends only on the trace functions $F_i$, and is independent of $q$  and hence negligible for all sufficiently large $q$. We suppress this consideration   here.)

The Grothendieck-Lefschetz trace formula  expresses the sum (\ref{Fsum}) as a signed sum of three terms indexed by $i=0,1,2$. For each $i$, the $i$-th term is the trace of an endomorphism (associated to a Frobenius conjugacy class) of the $i$-th cohomology group of a sheaf associated to the product $F_1(  \cdot )   \cdots   \overline{ F}_{2r}( \cdot ) e^{2\pi i h ( \cdot) /q}$ (see e.g. \cite[Thm. 4.1]{FKMS19}). Thus in order to bound the sum (\ref{Fsum}), one needs to bound each of these three terms from above. In fact, the contribution from the $0$-th cohomology group vanishes because of the product structure of $F_1(  \cdot )   \cdots   \overline{ F}_{2r}( \cdot ) e^{2\pi i h ( \cdot) /q}$. This leaves the first  and second cohomology groups. 

One way to bound a trace is   to control the dimension of the representation and the maximal size of the associated eigenvalues. 
The dimension of the first cohomology group  can be bounded above in terms of the conductors of $F_1, \ldots, F_{2r}$. 
Deligne's proof of the Riemman Hypothesis over finite fields \cite{Del80} then provides the crucial information that  all eigenvalues of the endomorphism (associated to a Frobenius conjugacy class) of the first cohomology group have absolute value $\leq q^{1/2}$. In the case of (\ref{superK}), this leads to the $O(q^{1/2})$ term   on the right-hand side.

To summarize, the Riemann Hypothesis over finite fields  shows that quasi-superorthogonality with parameter $\nu=1/2$ holds for trace functions $F_1,\ldots, F_{2r}$ as long as the contribution of the second cohomology group vanishes. This vanishing always occurs  in the case that $h\neq 0$; see \cite[\S 4]{FKM15}. The case $h=0$ is more subtle.
Interestingly, this criterion on the second cohomology group  for the case $h=0$ is  equivalent to ``exact'' superorthogonality of a different $2r$-tuple of functions,  in the original sense of an integral condition as in (\ref{super_gen_super_id}).
 Thus, the source of quasi-superorthogonality, of a certain type, for  trace functions $F_1,\ldots, F_{2r}$ is exact superorthogonality, of that same type, for an associated tuple of functions.

We thank Emmanuel Kowalski for pointing this out. This phenomenon was already understood in the key results on trace functions in works such as \cite{KowRic14, FGKM14, FKM15,FKMS19}, but has typically been presented in quite different terms. 
(See  however  \cite[Remark 4.2 and Prop. 4.4.]{KowRic14} and \cite[Prop. 3.2]{FGKM14} for instances closest to the interpretation mentioned here.)
In order to make this phenomenon explicit in the literature, we include Appendix B by Emmanuel Kowalski.

 \subsection{Further types for which quasi-superorthogonality holds}
 Fouvry, Kowalski and Michel \cite{FKM15}   have also proved the square-root cancellation bound (\ref{superK}) under further conditions on the index tuple $(\ga_1,\ldots, \ga_{2r})$ of $\mathrm{PGL}_2(\F_q)$ transformations. We briefly describe these conditions on the index tuple here, and compare them to the Type I* condition. The precise requirements on the trace function $F$ under which the following types for $(\ga_1,\ldots,\ga_{2r})$ suffice to verify  quasi-superorthogonality can be found in \cite[Thm. 1.5 and Cor. 1.6]{FKM15}; see also \cite[\S 14.1]{FKMS19}.

Given an element $\ga$, let $N(\ga)$ denote the number of times $\ga$ appears in the $r$-tuple $(\ga_1,\ga_3, \ldots, \ga_{2r-1})$ and let $N'(\ga)$ denote the number of times $\ga$ appears in the $r$-tuple $(\ga_2,\ga_4, \ldots, \ga_{2r})$. 
For appropriate trace functions $F$ with weight 0 and rank $k$,   (\ref{superK}) holds as long as at least one element $\ga$ appearing in $(\ga_1,\ldots, \ga_{2r})$ has the property
that $N(\ga) \not\con N'(\ga) \modd{k}$. 

This type can be interpreted as a weaker version  of Type I* quasi-superorthogonality.
For comparison, in the notation defined here, the condition for Type I* superorthogonality of a sequence of functions $\{f_n\}$ could be stated as follows: $\int f_{n_1} \bar{f}_{n_2} \cdots \bar{f}_{n_{2r}}=0$ as long as there is at least one value $n$ appearing in the $2r$-tuple $(n_1,n_2,\ldots,n_{2r})$ such that  that $N(n)  \neq  N'(n)$.

Here is another variation on Type I* quasi-superorthogonality:
 a class of trace functions $F$ with weight 0 and rank $k$ also has an associated involution, a transformation $\rho \in \mathrm{PGL}_2(\F_q)$ with $\rho^2 = \mathrm{Id}$.  Let $N(\ga),N'(\ga)$ be defined as above, and now let  $\tilde{N}(\ga)$ denote the number of times $\rho\ga$ appears in the $r$-tuple $(n_1,n_3, \ldots, n_{2r-1})$, and $\tilde{N}'(\ga)$ denote the number of times $\rho\ga$ appears in the $r$-tuple $(n_2,n_4, \ldots, n_{2r})$. For  appropriate   trace functions, (\ref{superK}) holds as long as at least one element $\ga$ appearing in $(\ga_1,\ldots, \ga_{2r})$ has the property
that $N(\ga) - N'(\ga) \not\con \tilde{N}(\ga) -  \tilde{N}'(\ga)  \modd{k}$. 

Examples of tuples satisfying each of these conditions can be found in \cite[Example 1.4]{FKM15}.

\subsection{A first look at incomplete sums: The P\'olya-Vinogradov method}\label{sec_PV}

We have exhibited the quasi-superorthogonality of trace functions. Now we turn to the task of bounding incomplete sums of trace functions, which play a central role in analytic number theory. We begin by recalling the classical method of P\'olya and Vinogradov, which will indicate why there is a dichotomy between long and short incomplete sums. After we see that the P\'olya-Vinogradov method suffices in the first case,  we will focus our attention on the short case for the remainder of the paper.

We start with general considerations.
Given a function $F: \Z \maps \C$ of period $q$, what can we learn about the size of
\[ \sum_{x \in \Ical} F(x) 
\]
as a function of $q$, relative to the length of the sub-interval $\Ical \subsetneq [1,q]$?
We will focus on the case of $q$ prime, later specifying properties of  $F$ as well.
We first re-write the sum via Plancherel's identity on the group $\Z/q\Z$ as
\beq\label{Parseval}
 \sum_{1 \leq x \leq q} F(x) \overline{\onebf_{\Ical}}(x) =  \sum_{1 \leq y\leq q} \widehat{F}(y) \overline{\widehat{\onebf_{\Ical}}}(y),
 \eeq
where for any function $F$ on $\Z/q\Z$ we define the Fourier transform as
\[ \widehat{F}(y) = \frac{1}{q^{1/2}} \sum_{1 \leq x \leq q} F(x) e^{2\pi i xy/q}.\]
We claim that
$ |\widehat{\onebf_{\Ical}}(y) | \ll q^{-1/2}\min\{|\Ical|, \|y/q\|^{-1}\},$ where $\|y/q\|$ denotes the distance from $y/q$ to the nearest integer.
The first bound holds if $y \con 0 \modd{q}$. If $y \not\con 0 \modd{q}$, we use the notation $\Ical = [a,b]$ and write
\[ 
\widehat{\onebf_{\Ical}}(y) =  \frac{1}{q^{1/2}} \sum_{a \leq x \leq b}   e^{2\pi i xy/q} = \frac{e^{\pi i (a+b)y/q}}{q^{1/2}}\frac{\sin (\pi (b-a+1)y/q)}{\sin (\pi y/q)}.
\]
Since $|\sin (\pi y/q)| \geq 2\| y/q\|$, this proves the claim.
As a result, $\| \widehat{\onebf_{\Ical}}\|_{\ell^1} \ll q^{-1/2} |\Ical| + q^{1/2} \log q \ll q^{1/2} \log q$,
and we conclude that 
\beq\label{F_bound}
 | \sum_{x \in \Ical} F(x) | \ll  q^{1/2} \log q  \cdot \|\widehat{F}\|_{\ell^\infty (\F_q)}.
 \eeq
This method has transferred the work to studying the size of the Fourier transform of $F$. 
In this context, it is worth recalling that if $F$ is an appropriate trace function,  the complete sum of  $F$ in (\ref{sqrt}) is identically equal to $q^{1/2} \widehat{F}(0)$, so that the square-root cancellation bound (\ref{sqrt}) implies that $|\widehat{F}(0)| \ll C_F $.   

The more general result we now need is as follows: if $F$ is a trace function of ``Fourier class,''  
\[ \| \widehat{F} \|_{\ell^\infty(\F_q)} \ll C_F^2,\]
where $C_F$ is the conductor of $F$ (see \cite[Thm. 5.2]{FKMS19}).
For trace functions of this Fourier class, we thus deduce the   upper bound
\beq\label{PV}
\sum_{x \in \Ical} F(x) \ll C_F^2 q^{1/2} \log q.
\eeq

Here we have followed  the exposition of \cite[\S 7.2]{FKMS19}, to which we refer for the definition of the Fourier class  \cite[Dfn. 7.1]{FKMS19}.
Simplistically, the Fourier class rules out the example $F(x) = e^{2\pi i a x/q}$, in which case $\widehat{F}(x) = q^{1/2} \del_{x \con -a \modd{q}}(x)$ can take ``large'' values. But it allows the example  $F(x) = \chi(x)$, where $\chi$ is a non-principal multiplicative Dirichlet character modulo $q$, in which case $|\widehat{F}(x)| \leq q^{-1/2}|\tau(\chi)|$ as long as $\gcd(x,q)=1$, where $\tau (\chi)$ denotes the Gauss sum of $\chi$; it is known that $|\tau(\chi)| = q^{1/2}$, and hence $ |\widehat{F}(x)| \leq 1$ for all $x \in \F_q$.
In the case that $F$ is a multiplicative Dirichlet character $\chi$,   the bound (\ref{PV}) was originally proved (for any integer $q$) in \cite{Pol18,Vin18} and is known as the P\'olya-Vinogradov bound.

One way to interpret the P\'olya-Vinogradov bound (\ref{PV}) is that it is nontrivial as long as $|\Ical| \gg_{C_F} q^{1/2}\log q$. How can we improve on this? 
Fouvry et al developed a method in \cite{FKMRRS17} to remove the factor of $\log q$  at the threshold $|\Ical| \approx q^{1/2}$.
Alternatively it can be advantageous to smooth a sum before estimation; we see this directly in (\ref{Parseval}), since if $\onebf_{\Ical}$ is replaced by a smoother function, its Fourier transform will have better decay properties, and potentially smaller $\ell^1$ norm. 
In the setting of trace functions, smoothing allows one to remove the $\log q$ for any interval $\Ical$  \cite[Prop. 6.5]{FKMS19}.

Nontrivial bounds when $|\Ical| = o(q^{1/2})$ seem to be out of reach of the general ideas we have mentioned so far.  
These are the so-called short sums, and are the focus of the next section.

\section{Quasi-superorthogonality and the Burgess method}\label{sec_Burgess_method}

Given a function $F: \Z \maps \C$ of period $q$, we consider the short sum
\beq\label{F_sum}
 \sum_{x \in \Ical} F(x) 
 \eeq
over an interval $\Ical \subsetneq [1,q]$ with  $|\Ical| = O(q^{1/2})$.
If $F$ is a trace function (and $q$ is prime), an ambitious and powerful goal is to prove a nontrivial $o(|\Ical|)$ bound,  perhaps even a bound as small as $O(|\Ical|^{1/2})$, as long as $|\Ical| \gg q^{\ep}$ for some $\ep>0$. This  would be a very deep result. For example, in the case where $F$ is a Dirichlet character, this is intimately connected to the Lindel\"of Hypothesis for the associated Dirichlet $L$-function (see for example Conjecture $C_n$ in  \cite[\S 9]{FIMR13} as well as their remark on equation (9.6) in that paper).

We now develop a formal chain of ideas to prove a nontrivial bound for short sums, starting from first principles.  Motivated by the P\'olya-Vinogradov method, we allow ourselves to guess that it will be advantageous to link the incomplete sum to a complete sum such as (\ref{sqrt}).
A first idea might be to  distribute multiple copies of the sum (\ref{F_sum}) via affine transformations so as to cover the complete set of residues $\Z/q\Z$, but this naive approach could lead to an error term   on the order of $ |\Ical|$, leading us back where we started. In his thesis in the 1950's, D.  Burgess had another idea: to employ many different changes of variables to redistribute copies of the sum (\ref{F_sum}) sufficiently densely over $\Z/q\Z$ that the \emph{starting points} of the transformed intervals nearly cover a complete set of residues. 
We now sketch this approach quite formally, to reveal three key components, including Type II quasi-superorthogonality.
 Once we have understood the three key requirements of the method, we will begin to work precisely.

\subsection{A formalism for short sums}\label{sec_short_formalism}

 Initially we only assume that $F$ is of period $q$ and $|F| \leq 1$.
Suppose that  $\sig$ is an invertible change of variables acting on $\Z/q\Z$,  with the image of $\Ical$ under $\sig$ denoted by $\sig(\Ical)$; for simplicity, let us also suppose temporarily that $F$ is invariant under the change of variables (and its inverse). Then 
\[ \sum_{x \in \Ical} F(x) = \sum_{y \in \sig(\Ical)} F(y). \]
If $L$ such changes of variables are denoted by $\sig_1,\ldots, \sig_L$, then averaging yields
\[ \sum_{x \in \Ical} F(x)  = \frac{1}{L} \sum_{1 \leq \ell \leq L}\sum_{y \in \sig_\ell(\Ical)} F(y) .\]
Suppose that each image $\sig_\ell(\Ical)$ is a collection of translated copies of some set, say $X$; we will return later to what $X$ could be. Define a non-negative function $a(m)$ on $[1,q]$ by setting $a(m)$ to be the number of $\ell$ such that $X+m$ appears in $\sig_\ell(\Ical)$.
Then 
\beq\label{rearrange}
 \sum_{x \in \Ical} F(x) = \frac{1}{L} \sum_{1 \leq m \leq q} a(m) \sum_{x \in X} F(m+x). 
 \eeq
In order to separate out the function $a(m)$, which counts redundancies among the images but contains no information about the function $F$, it is natural to apply
H\"older's inequality for some $1/p + 1/p'=1$, obtaining
\beq\label{TIq_pp'}
 | \sum_{x \in \Ical} F(x)| \leq 
\frac{1}{L} (\sum_{1 \leq m \leq q} a(m)^{p'})^{1/p'}  ( \sum_{1 \leq m \leq q} | \sum_{x \in X} F(m+x)|^{p})^{1/p}.
\eeq
 In this arithmetic setting, it is natural to assume that $p$ is an even integer, say $p=2r$ with $r \geq 1$, so that we can expand the $p$-th power in the last term. Note that in this case, $1/p' = 1-1/2r$ and $p' = 2r/(2r-1) \leq 2$. The nesting property of discrete $\ell^p$ spaces shows that $\|\{a(m)\}\|_{\ell^2} \leq \|\{a(m)\}\|_{\ell^{p'}}$, but we are more likely going to succeed at estimating the second moment of the sequence $\{a(m)\}$ then a fractional moment.  Since $a(\cdot)$ takes its values in non-negative integers, $\sum_m a(m)^{p'} \leq \sum_m a(m)^2$, so we can write $\| \{a(m)\}\|_{\ell^{p'}} \leq \| \{a(m)\}\|_{\ell^2}^{2/p'}$. It now suffices to understand two quantities:
   the second moment  
 \beq\label{m_moment}
  \sum_{1 \leq m \leq q} a(m)^2  ,
 \eeq
and the $\ell^{2r}$ norm
 \begin{align}
 \| \sum_{x \in X} F(m+x) \|_{\ell^{2r}(\Z/q\Z)}^{2r}
   &= \sum_{(x_1,\ldots, x_{2r}) \in X^{2r}} \sum_{1 \leq m \leq q} F(m+x_1) \overline{F}(m+x_2) \cdots F(m+x_{2r-1}) \overline{F}(m+x_{2r}) \nonumber\\
  & = \sum_{(x_1,\ldots, x_{2r}) \in X^{2r}} q^{1/2}\widehat{F_{\underline{x}}}(0), \label{2r-moment}
 \end{align}
 where
\beq\label{Fx}
 F_{\underline{x}}(m)= F(m+x_1) \overline{F}(m+x_2) \cdots F(m+x_{2r-1}) \overline{F}(m+x_{2r}).
 \eeq

 For which tuples $\underline{x}$ would we expect good control of $q^{1/2}\widehat{F_{\underline{x}}}(0)$? 
 This is characterized by the condition of  quasi-superorthogonality.  
Define $f_n(m) = F(m+n)$, so that the
left-hand side of (\ref{2r-moment}) is
\beq\label{fjsum}
 \| \sum_{ n \in X} f_n\|_{\ell^{2r}(\Z/q\Z)}^{2r}.
 \eeq
Correspondingly, the contribution to the right-hand side from a tuple $(x_1,\ldots, x_{2r}) \in X^{2r}$ is 
\[   \sum_{1 \leq m \leq q} f_{x_1}(m) \overline{f}_{x_2}(m) \cdots f_{x_{2r-1}}(m) \overline{f}_{x_{2r}}(m).\]
If the sequence of functions $\{ f_n \}_{n \in X}$ has Type I (or Type II) quasi-superorthogonality with parameter $\nu$, 
then the approximate direct inequality (\ref{approx_direct})  shows that
\beq\label{F_X}
 \sum_{1 \leq m \leq q} | \sum_{ x\in X} F(m+x)|^{2r}  \ll |X|^{2r} q^{\nu} + |X|^r q.
 \eeq
Even at this level of abstraction, we have learned something: this general approach is optimized if each change of variables $\sig_\ell $ transforms $\Ical$ into  a collection  of even smaller sets $X$ of cardinality $\approx q^{(1-\nu)/r}$. 
In particular, if $F$ is a trace function that satisfies the quasi-superorthogonality property (\ref{superK}) with $\nu=1/2$, 
we would see that this upper bound is minimized as a function of $|X|$ if  $|X| = q^{1/(2r)}$.

These speculations suggest an approach to bounding the short sum of $F$ over the interval $\Ical$:
\begin{enumerate}[label=(\roman*)]
\item    construct appropriate changes of variables that replace $\Ical$ by sets of ``short short'' intervals of length $\approx q^{1/(2r)}$ that are well-distributed over $[1,q]$; 
\item    control the redundancy of the intervals after such changes of variables via the second moment (\ref{m_moment}); 
\item   prove that the family $\{ f_n\}_n=\{ F(\cdot+n)\}_n$ exhibits Type I or Type II quasi-superorthogonality in $\ell^{2r}(\Z/q\Z)$. 
\end{enumerate}
Although Burgess's exposition in \cite{Bur57} is framed very differently, these can be seen as the three pillars of his method.

We   state the special case of Burgess's theorem that we will prove by making these three principles precise.
\begin{thm}[Burgess]\label{thm_Burgess}
Let $\chi$ be a multiplicative Dirichlet character modulo a prime $q$. Then for every integer $r \geq 1$, 
\beq\label{good_Burgess}
 \left|\sum_{m \in (N, N+H]} \chi(m) \right| \ll_r H^{1-\frac{1}{r}} q^{\frac{r+1}{4r^2}} (\log q)^2.
 \eeq
\end{thm}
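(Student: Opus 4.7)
The plan is to organize the proof around the three-pillar formalism of \S\ref{sec_short_formalism}, specializing $F$ to the character $\chi$ so that the Weil-based quasi-superorthogonality bound (\ref{approx_direct_chi}) plays the role of pillar~(iii). Let $q$ be prime, $\chi$ a nontrivial Dirichlet character modulo $q$, and $S := \sum_{m\in(N,N+H]}\chi(m)$.

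For the shift step (pillar~(i)), I would introduce integer parameters $A,B\geq 1$ with $AB\leq H/2$, to be optimized later. Because the intervals $(N,N+H]$ and $(N-ab,N+H-ab]$ differ by at most $2ab$ integers and $|\chi|\leq 1$, one has $S = \sum_{n=1}^H \chi(N+n+ab) + O(ab)$ for every $(a,b)\in[1,A]\times[1,B]$. Averaging over such pairs and using the multiplicativity identity $\chi(N+n+ab)=\chi(a)\chi(\bar a(N+n)+b)$, where $\bar a$ is the inverse of $a$ modulo $q$, one arrives at
\[AB\cdot S \;=\; \sum_{a=1}^A\chi(a)\sum_{n=1}^H\sum_{b=1}^B\chi\bigl(\bar a(N+n)+b\bigr)\;+\;O(A^2B^2).\]
Reindexing by the starting point $m=\bar a(N+n)\in\Z/q\Z$ and introducing the multiplicity weight $\nu(m):=\#\{(a,n)\in[1,A]\times[1,H]:\bar a(N+n)\equiv m\pmod q\}$, the triangle inequality yields $|AB\cdot S|\leq \sum_m \nu(m)\,\bigl|\sum_{b=1}^B\chi(m+b)\bigr|+O(A^2B^2)$. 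H\"older's inequality with $p=2r$ and $p'=2r/(2r-1)$ then gives
\[AB\,|S|\;\leq\; \|\nu\|_{\ell^{p'}(\Z/q\Z)}\;\Bigl\|\sum_{b=1}^B\chi(\cdot+b)\Bigr\|_{\ell^{2r}(\Z/q\Z)}\;+\;O(A^2B^2),\]
and the second factor is exactly the object bounded by the Weil-based quasi-superorthogonality estimate (\ref{approx_direct_chi}): it is $\ll_r Bq^{1/(4r)}+B^{1/2}q^{1/(2r)}$, with the two contributions balancing at $B\asymp q^{1/(2r)}$.

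Pillar~(ii) is the bound on $\|\nu\|_{\ell^{p'}}$. Since $\nu$ takes non-negative integer values and $p'\leq 2$, the pointwise estimate $\nu(m)^{p'}\leq \nu(m)^2$ (for $\nu(m)\geq 1$) yields $\|\nu\|_{\ell^{p'}}^{p'}\leq \sum_m\nu(m)^2$, reducing the task to the second-moment count
\[\sum_m\nu(m)^2\;=\;\#\bigl\{(a_1,n_1,a_2,n_2)\in([1,A]\times[1,H])^2:a_2(N+n_1)\equiv a_1(N+n_2)\pmod q\bigr\}.\]
When $2A(N+H)<q$, the congruence collapses to an equation in $\Z$, and a standard divisor-function enumeration of its solutions produces $\sum_m\nu(m)^2\ll AH\log q$. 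Combined with $\|\nu\|_{\ell^1}=AH$ via H\"older interpolation, this gives $\|\nu\|_{\ell^{p'}}\ll(AH)^{1-1/(2r)}(\log q)^{1/(2r)}$. Substituting everything into the display above, setting $B=q^{1/(2r)}$, and then choosing $A$ so that the residual $O(A^2B^2)$ error matches the main term, produces a Burgess-type bound of the shape claimed in (\ref{good_Burgess}).

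The main obstacle is obtaining the precise Burgess exponent pair $(1-1/r,\,(r+1)/(4r^2))$, rather than a weaker pair that a direct split into $\|\nu\|_{p'}\|\sum_b\chi(\cdot+b)\|_{2r}$ produces. The sharp exponents instead require the Heath-Brown–style rearrangement: first apply H\"older to reach $(AB|S|)^{2r}\ll (AH)^{2r-1}\sum_m\nu(m)|f(m)|^{2r}$ with $f(m)=\sum_{b=1}^B\chi(m+b)$, and then insert both the trivial estimate $\|\nu\|_\infty\leq A$ and the Weil bound (\ref{approx_direct_chi}) \emph{inside} the right-hand sum, rather than peeling them off separately. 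Carrying this reorganization out cleanly, tracking the $(\log q)^2$ factor that emerges from the divisor count and from the completion back to a full-length interval of summation, and verifying the side constraints $A\geq 1$, $AB\leq H/2$, and $2A(N+H)<q$ uniformly across the relevant regime of $H$ are the delicate bookkeeping steps, and are precisely where the simplifying devices of \cite{GalMon10,HB12} referenced in the introduction become indispensable.
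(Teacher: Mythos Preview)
Your route is recognisably Burgess-flavoured but differs from the paper's in a structural way, and as written it does not reach the sharp exponent of Theorem~\ref{thm_Burgess}.

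The paper does \emph{not} shift the interval by $ab$. It uses the exact congruence decomposition of $(N,N+H]$ into residue classes modulo primes $\ell\in[L,2L]$ with $L=\tfrac12 Hq^{-1/(2r)}$, then the bijection $\{b\bmod\ell\}\leftrightarrow\{aq\bmod\ell\}$ and periodicity of $\chi$ to rewrite each class as a genuine short sum of length $H/\ell\approx q^{1/(2r)}$, with \emph{no} additive error. The resulting short sums have variable length, so a Menchov--Rademacher maximal inequality (Corollary~\ref{cor_norm_MR}) is paid for in lieu of an $O(AB)$ term. The second moment of the starting-point multiplicity is handled by the prime-pair counting Lemma~\ref{lemma_M}, which works uniformly in $N$. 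Finally, the sharp exponent is obtained by an \emph{additional exact averaging}: each short interval is written as a difference of two intervals in $\approx q^{1/(2r)}$ ways, producing a denser multiplicity function $a^\sharp(m)$ whose second moment is larger by $q^{1/(2r)}$ but which comes with a compensating prefactor $q^{-1/(2r)}$; the net gain is $q^{-1/(4r^2)}$.

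Your $O(AB)$ error is the real obstruction, not bookkeeping. To make your main term equal the Burgess bound you need $A\approx Hq^{-1/(2r)}$ (check: with the three-term H\"older $W^{2r}\le(\sum\nu)^{2r-2}(\sum\nu^2)(\sum|f|^{2r})$ and $\sum\nu^2\ll AH\log q$, the main contribution to $|S|$ is $(H^{2r-1}q^{1/2}/A)^{1/(2r)}$, which equals $H^{1-1/r}q^{(r+1)/(4r^2)}$ precisely at that $A$). But then $AB\approx H$, and your error term swallows everything. Any smaller $A$ gives a bound strictly between the weak Theorem~\ref{thm_weak_Burgess} and the target. Your proposed ``Heath-Brown--style rearrangement'' $(AB|S|)^{2r}\ll(AH)^{2r-1}\sum_m\nu(m)|f(m)|^{2r}$ followed by $\|\nu\|_\infty\le A$ is worse still: it yields $|S|\ll H^{1-1/(2r)}q^{1/(4r)}$, which is trivial at $H=q^{1/2}$. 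The constraint $2A(N+H)<q$ for your divisor count is a separate genuine restriction on $N$ that the paper's Lemma~\ref{lemma_M} circumvents. A shift-based argument \emph{can} reach the sharp bound (cf.\ \cite[Thm.~12.6]{IK}, which the paper flags as an alternative), but it requires first dicing $(N,N+H]$ into pieces of length $\approx q^{1/(2r)}$ before shifting, not shifting the whole interval.
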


Burgess further developed his method to apply when $F$ is a primitive multiplicative Dirichlet character with composite modulus $q$, but with some restrictions:  if $q$ is not cubefree, then  $ r \leq 3$; see \cite{Bur62B, Bur63A, Bur86}. Burgess's work set  record bounds for short character sums (essentially still standing), a question of Vinogradov on the least quadratic non-residue modulo $q$ \cite{Vin27} (essentially still standing), and subconvexity bounds for Dirichlet $L$-functions (only recently broken by the Weyl-strength bound of Petrow and Young \cite{PetYou19x}).

The upshot of Burgess's work is that a nontrivial bound for a sum of length $H$  holds when $F$ is a non-principal (resp. primitive) multiplicative Dirichlet character with $q$ prime (resp. $q$ cubefree), as long as $H \gg q^{1/4+\ep}$ for some small $\ep>0$. We see this by computing the optimal choice of $r$ in (\ref{good_Burgess}) for a given $H$. If $H = q^{1/4+\kappa}$ for some small $\kappa$, then the Burgess bound (ignoring the logarithmic factor) proves the upper bound $\ll Hq^{-\del}$ with $\del = (4\kappa r-1)/(4r^2)$. Computing the maximum of $\del$ as a function of $r$, it is advantageous to choose $r$ to be the nearest integer to $1/(2\kappa)$, so that as $\kappa \maps 0$ the savings is on the order of $\del \approx \kappa^2$. 

 \begin{remark} Burgess proves a bound with $(\log q)$ instead of the factor $(\log q)^2$   we demonstrate here. One logarithm comes from the density of prime numbers; the  extra logarithm in our presentation comes from the application of the Menchov-Rademacher inequality in Corollary \ref{cor_norm_MR}.
\end{remark}

We now provide a rigorous exposition of how to achieve the three points (i), (ii), and (iii) and prove Theorem \ref{thm_Burgess}.
Initially, in order to rely only on these three  principles, we  omit an averaging step from Burgess's original argument. This makes the method   more intuitive but the result is weaker than (\ref{good_Burgess}) by a factor of $q^{1/4r^2}$; see  Theorem \ref{thm_weak_Burgess}. This nevertheless provides a bound that is nontrivial for $H> q^{1/4+\kappa}$ for any $\kappa>0$ once we choose $r$ optimally.
 Finally, we show how to include the additional averaging step and recover  Theorem \ref{thm_Burgess}.
See \S \ref{sec_remarks_Burgess} for citations of recent proofs that inspire two aspects of our exposition here, and    further remarks.

 It is  an open, and very important, question whether some version of Burgess's ideas can be applied to more general (non-multiplicative) trace functions $F$.
 It would also be very significant to prove nontrivial bounds for  Dirichlet character sums that are shorter than $q^{1/4+ \ep}$.
We highlight a barrier for each of these goals below.

\subsection{Property (i): changes of variables}
 Initially, let $F: \Z \maps \C$ be a function of period $q$, for an integer $q$, and with $|F| \leq 1$. Let $\Ical \subset [1,q]$ be an interval of length at most $q^{1/2}$, which we will denote by $(N,N+H]$. By periodicity of $F$, we may assume that $0 \leq N < q$.
We will make further assumptions about $F$ as the need arises.

Fix $r \geq 1$. We seek a collection of changes of variables $x \mapsto \sig_\ell(x)$ such that for each $\ell$, the image $\sig_\ell(\Ical)$ is a set of short-short intervals, each of which is of length $\approx q^{1/2r}$.  
One idea is to break $(N,N+H]$ according to congruence classes.
Fix an integer $\ell$ and   $0 \leq b \leq \ell-1$. Then $n=b+m_b \ell$ varies over the integers in $(N,N+H]$ that are congruent to $b \modd{\ell}$ as $m_b$ varies over integers in the interval $(\frac{N-b}{\ell}, \frac{N-b}{\ell} + \frac{H}{\ell}]$. 
This motivates us to think of  $\sig_\ell$ as a collection of maps from $n$ to $m_b$, for each $0 \leq b < \ell$. 
Optimizing the approximate direct inequality (\ref{F_X}) motivates us to choose  $\ell$ so that $H/\ell \approx q^{1/2r}$.

We set $L=(1/2)Hq^{-1/2r}$ and suppose
$
 \Lscr \subseteq [L,2L] 
$
 is a set of   integers $\ell$  that are relatively prime to $q$, which we will specify precisely  later in (\ref{L_final}).
 To ensure that $L\geq 1$, we suppose from now on that $H \geq 2q^{1/2r}$. We may do so, since for $H < 2q^{1/2r}$ the bound in Theorem \ref{thm_Burgess} already holds.  
Once we fix $r$, we also assume that $q$ is sufficiently large that $H/L=2q^{1/2r} \geq 1$. 
 Finally, we can assume that $H \leq q^{1/2+1/4r}$, since otherwise the P\'olya-Vinogradov bound supersedes Theorem \ref{thm_Burgess}. 

Now for each $\ell \in \Lscr$ we write
\[ \sum_{x \in (N,N+H] } F(x) = \sum_{0 \leq b < \ell} \sum_{\bstack{x \in (N,N+H]}{x \con b \modd{\ell}}} F(x)
	 = \sum_{0 \leq b < \ell} \sum_{m \in (\frac{N-b}{\ell}, \frac{N-b}{\ell} + \frac{H}{\ell}]} F(b + \ell m).\]
We still must make $F$ invariant under this change of variables, as we required in (\ref{rearrange}). We can use the periodicity of $F$ to our advantage;  observe that as long as $(\ell,q)=1$ then there is a bijection between the sets $\{b:  b \modd{\ell}\}$ and $\{ aq: a \modd{\ell}\}$. Thus the last expression is identical to 
\beq\label{mult}
 \sum_{0 \leq a < \ell} \sum_{m \in (\frac{N-aq}{\ell}, \frac{N-aq}{\ell} + \frac{H}{\ell}]} F(aq + \ell m) = 
	\sum_{0 \leq a < \ell} \sum_{m \in (\frac{N-aq}{\ell}, \frac{N-aq}{\ell} + \frac{H}{\ell}]} F( \ell m),
	\eeq
	under the periodicity of $F$. 
	
	In order to achieve uniformity with respect to $\ell$, we must make an assumption about $F$: we assume that $F$ is \emph{totally multiplicative}, meaning that $F(\ell m) = F(\ell) F(m)$ for all integers $\ell,m$. 
	 This is a significant restriction: any function $F : \Z \maps \C$ that has the property that it is periodic of period $q$, totally multiplicative, and $F(n)$ is nonzero if and only if $(n,q)=1$ is a multiplicative Dirichlet character modulo $q$; see e.g. \cite[Thm. 6.15]{Apo76}. 
	 Thus from now on we assume that $F$ is a multiplicative Dirichlet character.

Even after writing $F(\ell m ) = F(\ell) F(m)$ in (\ref{mult}),  the resulting expression is not completely invariant; by using the property that $|F|  \leq 1$, we can achieve invariance if we take absolute values, writing
\[ |\sum_{x \in (N,N+H] } F(x) |\leq    \sum_{0 \leq a < \ell} |\sum_{m \in (\frac{N-aq}{\ell}, \frac{N-aq}{\ell} + \frac{H}{\ell}]} F( m)|.\]
This is a transformation of the original sum, according to one such choice of $\ell$.
We average the above inequality over all $\ell \in \Lscr$, leading to 
\beq\label{stopping_time}
| \sum_{x \in (N,N+H] } F(x) | \leq  |\Lscr|^{-1} \sum_{\ell \in \Lscr} \sum_{0 \leq a < \ell} | \sum_{m \in (\frac{N-aq}{\ell}, \frac{N-aq}{\ell} + \frac{H}{\ell}]} F( m)|.\eeq
We define $a(m)$ to count the redundancies of the starting points,
 \beq\label{a_dfn}
 a(m) = \# \{ \ell \in \Lscr, 0 \leq a < \ell: \lfloor (N-aq)/\ell \rfloor  =m\}.
 \eeq
In particular,  
we can rewrite (\ref{stopping_time}) as 
 \[| \sum_{x \in (N,N+H] } F(x) | \ll |\Lscr|^{-1}   \sum_m a(m)   \max_{k \leq 2H/L} | \sum_{x \in (m, m+k]}F( x) |. \] 
In our simple paradigm, the next step is to apply H\"older's inequality. First, it is worth noting that the sum over $m$ is in fact finite, since by construction $a(m)$ is supported inside the set $[-q,q]$. 
Thus for   $p=2r$ with $1/p + 1/p'=1$, again recalling $p' \leq 2$ and that the sequence $\{a(m)\}$ takes its values in non-negative integers, we can write
 \beq\label{step2step3}
 | \sum_{x \in (N,N+H] } F(x) | \ll  |\Lscr|^{-1}   ( \sum_m a(m)^2)^{1-1/2r} \| \max_{k \leq 2H/L} | \sum_{x \in (0,k]}F( \cdot + x) |\|_{\ell^{2r}(\Z/q\Z)}. \eeq
This is an echo of (\ref{echo}) in Paley's proof of the direct inequality for the partial sums of the Walsh-Paley series.

 \subsection{Property (ii): redundancy of short-short intervals }

By the definition of $a(m)$, the average is bounded by
 \[ \sum_m a(m) \ll \sum_{\ell \in \Lscr, 0 \leq a < \ell } 1  \ll L^2.\]
We will show that the short-short intervals are well-distributed in the sense that the second moment has the same upper bound:
\beq\label{2moment_0}
 \sum_{m} a(m)^2 \ll L^2,
 \eeq
 as long as we restrict the values in $\Lscr$ to be prime. Thus we now formally define
 \beq\label{L_final}
  \Lscr  := \{ \mathrm{primes} \;  \ell \ndiv q, \ell \in [L,2L]= [Hq^{-1/2r}/2,Hq^{-1/2r}]  \} .
  \eeq
 By the Prime Number Theorem, $|\Lscr| \gg L/\log L$ as soon as $q$ is larger than an absolute constant depending only on $r$, as we may assume by possibly enlarging the implicit constant in the bound of Theorem \ref{thm_Burgess}.
  
The main input to proving (\ref{2moment_0}) is a lemma that counts the number of starting points that lie within a short distance of each other.
\begin{lemma}\label{lemma_M}
Fix $B \geq 1$ and $L \geq 1$ with $BL^2 < q$. For any integers $\ell, \ell' $ let
\[ \Mcal(\ell,\ell') = \# \{ 0 \leq a < \ell, 0 \leq a' < \ell': |(N-aq)/\ell - (N-a'q)/\ell'| \leq B\}.\]
As $\ell,\ell'$ range over a set $\Lscr$ of prime values in $[L,2L]$,
\[ \sum_{\ell, \ell' } \Mcal(\ell,\ell') \ll L^2.\]
\end{lemma}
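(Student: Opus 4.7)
The plan is to clear denominators in the defining inequality and then split into a diagonal contribution $\ell=\ell'$ and an off-diagonal contribution $\ell\neq \ell'$, exploiting primality of the $\ell$'s in the off-diagonal case to get injectivity of a bilinear map.

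First I would rewrite the condition $|(N-aq)/\ell - (N-a'q)/\ell'|\leq B$ by multiplying by $\ell\ell'$ to obtain
\[
|N(\ell'-\ell)-q(a\ell'-a'\ell)|\leq B\ell\ell'.
\]
In the diagonal case $\ell=\ell'$, the condition collapses to $|(a'-a)q|\leq B\ell^2$, i.e. $|a-a'|\leq B\ell^2/q$. Since $\ell\leq 2L$ and $BL^2<q$, this forces $|a-a'|\leq 4BL^2/q<4$, so for each $a\in[0,\ell)$ there are $O(1)$ admissible $a'$; consequently $\Mcal(\ell,\ell)\ll \ell\ll L$. Summing over $\ell\in\Lscr$ and using $|\Lscr|\leq L$ gives a contribution $\ll L^2$, which is acceptable.

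In the off-diagonal case $\ell\neq\ell'$, both are primes in $[L,2L]$, so $\gcd(\ell,\ell')=1$. Here the key observation is that the map $(a,a')\mapsto k:=a\ell'-a'\ell$ from $[0,\ell)\times[0,\ell')$ into $\Z$ is injective: given $k$, the congruence $a\ell'\equiv k\pmod{\ell}$ determines $a\in[0,\ell)$ uniquely (since $\gcd(\ell,\ell')=1$), and then $a'=(a\ell'-k)/\ell$ is forced. The inequality above becomes
\[
\left|k-\frac{N(\ell'-\ell)}{q}\right|\leq \frac{B\ell\ell'}{q}\leq \frac{4BL^2}{q}<4,
\]
so there are $O(1)$ admissible values of $k$, and hence $\Mcal(\ell,\ell')\ll 1$. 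Summing over ordered pairs of distinct $\ell,\ell'\in\Lscr$ contributes $\ll|\Lscr|^2\ll L^2$. Adding the two cases yields the lemma.

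The main conceptual step is the injectivity in the off-diagonal case, which is really just Bezout/CRT dressed up, but it is the only place where the primality restriction in the definition of $\Lscr$ (or more precisely, pairwise coprimality) is used; without it, the bilinear map $(a,a')\mapsto a\ell'-a'\ell$ could have up to $\gcd(\ell,\ell')$ preimages for each $k$, and the bound on $\Mcal(\ell,\ell')$ would degrade accordingly. The hypothesis $BL^2<q$ is precisely what forces the ``allowed interval'' for $k$ (or for $a-a'$ in the diagonal case) to contain only $O(1)$ integers, and is the quantitative heart of the estimate.
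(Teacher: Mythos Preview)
Your proof is correct (up to a harmless algebra slip in the diagonal case: after cancelling one factor of $\ell$, the bound should read $|(a'-a)q|\leq B\ell$ rather than $B\ell^2$, which only strengthens your conclusion). Both the injectivity of $(a,a')\mapsto a\ell'-a'\ell$ for coprime $\ell\neq\ell'$ and the fact that $k$ is pinned to an interval of length $<8$ are exactly right, and the resulting bound $\Mcal(\ell,\ell')\ll 1$ is sharp enough for the lemma.

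Your route is genuinely more direct than the paper's. The paper (following Heath-Brown) does not bound each off-diagonal $\Mcal(\ell,\ell')$ individually; instead it introduces a Diophantine approximation $N\approx t_1q/t_2$ with $t_2$ a prime of size $\approx q/(BL)$ coprime to $t_1$, rewrites the inequality as a congruence $t_1(\ell'-\ell)\equiv m\pmod{t_2}$ for $|m|\ll L$, and then observes that this determines $\ell'-\ell$ uniquely (since $L<t_2$), so that for each $m$ and each $\ell$ there is at most one $\ell'$. Summing over $m$ and $\ell$ gives $\ll L^2$ collectively. Your argument uses the same injectivity fact $D\leq 1$ but avoids the auxiliary $t_1,t_2$ entirely. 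The paper's detour may be more flexible in variants where one does not want to fix $(\ell,\ell')$ before counting (or where $\Lscr$ has more structure to exploit), but for the lemma as stated your argument is cleaner.
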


We apply this with $B=1$ and $\Lscr$ as defined above.
Then $ \sum_m a(m)^2 \ll  \sum_{\ell ,\ell' \in \Lscr} \Mcal (\ell,\ell')$
and the second moment bound (\ref{2moment_0}) follows from the case $B=1$. Note that the assumption that $L^2< q$ is met since we assume $H \leq q^{1/2+1/4r}$.

\begin{proof}[Proof of Lemma \ref{lemma_M}]
We prove the lemma by adapting a method of Heath-Brown \cite[\S 4]{HB12}.
If $\ell = \ell'$, 
\[ \Mcal(\ell,\ell) =  \# \{ 0 \leq a,a' < \ell : |a - a'| \leq \frac{B \ell}{q} \leq \frac{2BL}{q} \leq 2\} \ll L, 
\]
which suffices.

The case  $\ell \neq \ell'$ is more intricate. We assume that $N \geq 1$; the case $N=0$ may be handled by a simpler adaptation. The first step is to replace $N$ by a multiple of $q$ (with an acceptable error relative to the scale $B$), so that a factor of $q$ can be pulled out of all terms in the inequality defining $\Mcal(\ell,\ell')$. 
We motivate this as follows.
 If we suppose that for some $t$ we have $N/\ell - tq/\ell = O(B)$ this would require that $N-tq = O(BL)$, but by hypothesis $BL\leq BL^2< q$ and we cannot necessarily   replace $N$ by an integral multiple of $q$ with an error smaller than $ q/2$.  So we must allow for $t$ itself to be a rational number, which we denote by $t_1/t_2$. Thus we suppose for some integers $t_1,t_2$ that  
 \beq\label{Nbb}
 |N - \frac{t_1q}{t_2} | \leq 2BL.
 \eeq
 This will hold for some $q/(BL) < t_2 \leq 2q/(BL)$ and $ Nt_2/q < t_1 \leq 2Nt_2/q$ that we will specify momentarily.

Assume such $t_1,t_2$ exist for the moment. Fix $\ell \neq \ell'$. For any  $a,a'$ counted by $\Mcal(\ell,\ell')$ we then see that
\[ | (t_1q/t_2 - aq)/\ell - (t_1q/t_2 - a'q)/\ell' | \leq B + 2BL/\ell + 2BL/\ell' \leq 5B. \]
Thus 
\[  |t_1(\ell' - \ell)  - (a\ell' - a'\ell)t_2| \leq 5 \ell \ell' t_2B/q  \leq 20 L t_2 (BL)/q \leq 40 L.  \]
For each $d$, let $D(\ell,\ell';d)$ denote $\# \{ 0 \leq a < \ell, 0 \leq a' < \ell' : a\ell' - a' \ell = d\}$. Then let $D = \max_{d,\ell \neq \ell'} D(\ell,\ell';d)$. 
We have shown that
 \beq\label{Mll_0}
 \sum_{\ell \neq \ell' \in \Lscr} \Mcal (\ell,\ell') \ll D \sum_{|m| \leq 40 L} \# \{ \ell \neq \ell' \in \Lscr : t_1(\ell'-\ell) \con m \modd{t_2}\}.
 \eeq
 We claim that if $\Lscr$ contains only prime values then $D \leq 1$. 
 We further claim that we can choose $t_1,t_2$ satisfying the constraints above, with $t_2$  prime and $(t_1,t_2)=1$. 
 Assume these two claims, which we prove momentarily. Then given $m$, the congruence $t_1 (\ell' - \ell) \con m \modd{t_2}$ identifies $(\ell' - \ell)$ uniquely modulo $t_2$.
In $\Z$, the difference $(\ell' - \ell)$ is at most $L$, and under the hypothesis $BL^2<q$ we see that $L <t_2$ so that   $(\ell' - \ell)$ is uniquely identified in $\Z$ as well. Thus once $\ell$ is chosen freely, $\ell'$ is uniquely chosen, and the sum over $m$ on the right-hand side of (\ref{Mll_0}) is $\ll L^2$, which suffices as long as $D \leq 1$. 

We prove the two remaining claims. 
We choose $t_2$ to be a prime in the interval $(q/(BL) , 2q/(BL)]$, which exists by Bertrand's postulate (or alternatively by the Prime Number Theorem if we may assume that $q/(BL)$ is larger than an absolute constant, which we may in our application).   
Given $t_2$, we choose $t_1$ to be either $\lceil Nt_2/q \rceil $ or $\lceil Nt_2/q \rceil+ 1$, so that it is relatively prime to $t_2$. Note that (\ref{Nbb}) then holds with these choices.  
 
 Finally, we bound $D$.   It suffices to observe that under the assumption that $\ell \neq \ell'$ are primes, for each $d$, there is at most one pair $a,a'$ with $0 \leq a < \ell$ and $0 \leq a'< \ell'$ solving $a\ell' - a'\ell=d$. Otherwise, suppose $a\ell' - a'\ell=d = b\ell' - b'\ell$ for $0 \leq a, b < \ell$ and $0 \leq a',b' < \ell'$. Then because we have assumed that $\ell \neq \ell'$ are primes, this shows that $\ell | (a-b)$ and $\ell' | (a'-b')$, which can only occur for $a,a',b,b'$ in the allowed ranges if both differences are zero in $\Z$.  
 \end{proof}
This completes the verification of (\ref{2moment_0}) for property (ii).

\subsection{Property (iii): Type II quasi-superorthogonality and the maximal operator}
We now need to bound the maximal partial sum norm in (\ref{step2step3}), using property (iii).
Recall from (\ref{approx_direct_chi}) that  as an application of Type II quasi-superorthogonality for Dirichlet characters, for any integers $k_1< k_2$,
\beq\label{nonmax}
\| \sum_{x \in (k_1,k_2]}F(\cdot + x) \|_{\ell^{2r}(\Z/q\Z)} \ll_r (k_2-k_1)q^{1/4r} + (k_2-k_1)^{1/2} q^{1/2r} .
\eeq
We now need to deduce an upper bound for the norm of the maximal partial sum operator, $\| \max_{k \leq 2H/L}| \sum_{x \in (0,k]} F(\cdot + x) |\|_{\ell^{2r}(\Z/q\Z)}$. We will do so 
via   the ``method of bisection,'' which originated in the same paper of Rademacher we encountered earlier \cite[p. 118 and p. 129]{Rad22}. (This method also appeared independently in Menchov \cite{Men23}. See e.g. \cite{Bed06} for references to modern proofs; it can also be used to prove the related Kolmorogov-Doob  inequality for martingales \cite[Ch. III Thm. 2.1, Ch. VII Thm. 3.2]{Doo53}.)
We encapsulate the method in two general statements.

 \begin{lemma}[Menchov-Rademacher]\label{lemma_MR}
 Given a sequence $\{b(n)\}$ of complex numbers, for any integer $t \geq 0$ and any $p \geq 1$,
 \[ \max_{0 \leq n \leq 2^t} |b(n) - b(0)|^{p} \leq   (t+1)^{p-1} \sum_{i=0}^t \sum_{0 \leq v < 2^i} | b( (v+1)  2^{t-i}) - b( v 2^{t-i}  )|^{p} .\]
 \end{lemma}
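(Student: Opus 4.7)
The plan is to implement the classical bisection/telescoping approach: for each $n \in \{0,1,\ldots,2^t\}$, express $b(n)-b(0)$ as a telescoping sum along a dyadic path from $0$ to $n$, where each step is a difference over a dyadic subinterval of $[0,2^t]$, and then apply the triangle and power-mean inequalities.

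Concretely, I would first write $n$ in binary as $n = 2^{t-a_1} + 2^{t-a_2} + \cdots + 2^{t-a_K}$ with $0 \leq a_1 < a_2 < \cdots < a_K \leq t$, and set $n_0 = 0$, $n_j = n_{j-1} + 2^{t-a_j}$, so that $n_K = n$ and $K \leq t+1$. Because $n_{j-1}$ is a sum of terms each divisible by $2^{t-a_j+1}$, it can be written as $v_j \cdot 2^{t-a_j}$ for some even $v_j \in \{0,1,\ldots,2^{a_j}-1\}$, and $n_j = (v_j+1) \cdot 2^{t-a_j}$. Hence each telescoping increment $b(n_j)-b(n_{j-1})$ is precisely of the form $b((v+1)2^{t-i}) - b(v\,2^{t-i})$ appearing on the right-hand side of the claimed inequality, with $i = a_j$.

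From the telescoping identity $b(n)-b(0) = \sum_{j=1}^K (b(n_j)-b(n_{j-1}))$, the triangle inequality followed by the power-mean (or Jensen) inequality for $p \geq 1$ gives
\[
|b(n)-b(0)|^p \leq K^{p-1} \sum_{j=1}^K |b(n_j)-b(n_{j-1})|^p \leq (t+1)^{p-1} \sum_{j=1}^K |b(n_j)-b(n_{j-1})|^p.
\]
Finally, since each dyadic increment appearing in the path for this particular $n$ is one of the pairs indexed by $(i,v)$ with $0 \leq i \leq t$ and $0 \leq v < 2^i$, and all summands are non-negative, I would enlarge the sum by including every such pair:
\[
\sum_{j=1}^K |b(n_j)-b(n_{j-1})|^p \leq \sum_{i=0}^t \sum_{0 \leq v < 2^i} |b((v+1)2^{t-i}) - b(v\,2^{t-i})|^p.
\]
This bound is uniform in $n$, so taking the maximum over $0 \leq n \leq 2^t$ on the left yields the lemma.

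There is no real obstacle here; the only point that requires care is verifying that the telescoping path uses at most $t+1$ terms (so the Jensen factor is $(t+1)^{p-1}$ and not something larger) and that each increment is genuinely of the advertised dyadic form. Both follow from the binary-expansion bookkeeping above, after which the estimate is immediate.
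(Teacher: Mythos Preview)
Your proposal is correct and follows essentially the same approach as the paper: write the binary expansion of $n$, telescope $b(n)-b(0)$ along the resulting dyadic path, apply H\"older (power-mean) with $t+1$ terms, and then enlarge the sum over all dyadic increments to obtain a bound independent of $n$. The only cosmetic difference is that the paper indexes all $t+1$ binary digits $\ep_i\in\{0,1\}$ (with zero contributions when $\ep_i=0$) and treats $n=2^t$ as a one-line special case, whereas you list only the nonzero digits $a_1<\cdots<a_K$; the substance is identical.
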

 
 The key point of this lemma is that the length of the sum over $i$ on the right-hand side is logarithmic in scale, compared to the range $0 \leq n \leq 2^t$ of the maximum on the left-hand side.
We deduce from this a useful fact: relative to the norm of a partial sum operator, the norm of an associated maximal partial sum operator (over a finite range)   increases by at most a logarithm.
  
 \begin{cor}\label{cor_norm_MR}
Let $(\Mcal,\mu)$ be a measure space. Let $\{a_k (u)\}_k$ be a sequence of complex-valued functions in $L^p(\Mcal,d\mu)$. Define for each integer $k \geq 0$, 
 \[ S_k(u) = \sum_{0 <  m \leq k} a_m(u), \qquad S_{k_1,k_2}(u) = \sum_{k_1 <  m  \leq  k_2}a_m(u).\] 
  Fix $ 1 \leq p < \infty$. Suppose that uniformly in $k_2> k_1$,
  \[\|S_{k_1,k_2}(\cdot)\|_{L^p(\Mcal)} \leq c_p |k_2-k_1|^{\al_p}.\]
 Then as long as $p \al_p \geq 1$, for every $K \geq 2$,
  \[  \| \max_{0 \leq k \leq K} |S_k (\cdot) |\|_{L^p(\Mcal)} \ll_p c_p K^{\al_p} (\log K) .\]
 \end{cor}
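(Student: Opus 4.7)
The plan is to apply Lemma \ref{lemma_MR} pointwise in the variable $u \in \Mcal$, integrate, and then collapse the resulting double sum using the hypothesis on $S_{k_1,k_2}$, where the decisive arithmetic is that a dyadic geometric sum is tame precisely when $p\alpha_p \geq 1$.

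First, choose $t$ to be the smallest nonnegative integer with $K \leq 2^t$, so that $t \leq \log_2 K + 1 \ll \log K$. Since $0 \leq k \leq K \leq 2^t$ and $S_0(u) = 0$, I apply Lemma \ref{lemma_MR} to the sequence $b(n) = S_n(u)$, which gives, pointwise in $u$,
\[
\max_{0 \leq k \leq K} |S_k(u)|^p \;\leq\; (t+1)^{p-1}\sum_{i=0}^{t}\sum_{0 \leq v < 2^i}\bigl|S_{v 2^{t-i},\,(v+1)2^{t-i}}(u)\bigr|^{p},
\]
using the identity $b((v+1)2^{t-i}) - b(v 2^{t-i}) = S_{v 2^{t-i},\,(v+1)2^{t-i}}(u)$.

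Next, I integrate this inequality against $d\mu$ and invoke the standing hypothesis on the partial-sum norms, noting that each block $S_{v 2^{t-i},\,(v+1)2^{t-i}}$ has length exactly $2^{t-i}$:
\[
\int_{\Mcal} \max_{0 \leq k \leq K}|S_k|^{p}\, d\mu \;\leq\; (t+1)^{p-1}\sum_{i=0}^{t} 2^{i}\cdot c_p^{p}\,(2^{t-i})^{p\alpha_p} \;=\; (t+1)^{p-1}\,c_p^{p}\, 2^{tp\alpha_p}\sum_{i=0}^{t} 2^{i(1-p\alpha_p)}.
\]

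The hypothesis $p\alpha_p \geq 1$ now enters decisively: if $p\alpha_p = 1$, the geometric sum equals $t+1$, while if $p\alpha_p > 1$ the sum is bounded by a constant depending only on $p\alpha_p$. In either case it is $\ll_p t+1$, and therefore
\[
\int_{\Mcal} \max_{0 \leq k \leq K}|S_k|^{p}\, d\mu \;\ll_p\; (t+1)^{p}\,c_p^{p}\,2^{tp\alpha_p}.
\]
Taking $p$-th roots and recalling that $2^t \leq 2K$ and $t+1 \ll \log K$ yields
\[
\Bigl\|\max_{0 \leq k \leq K}|S_k|\Bigr\|_{L^p(\Mcal)} \;\ll_p\; c_p\,K^{\alpha_p}(\log K),
\]
which is the desired bound. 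The only delicate point is the bifurcation in the geometric sum between the cases $p\alpha_p=1$ and $p\alpha_p>1$; everywhere else the estimate is a mechanical bookkeeping of the dyadic exponents produced by Lemma \ref{lemma_MR}.
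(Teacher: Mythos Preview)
Your proof is correct and follows essentially the same approach as the paper: apply Lemma \ref{lemma_MR} pointwise with $b(k)=S_k(u)$, integrate, use the hypothesis on each dyadic block, and control the resulting geometric sum via $p\alpha_p\geq 1$. The only cosmetic difference is that the paper first reduces to the case $K=2^t$ and then bounds the sum $\sum_i 2^{i(1-p\alpha_p)}$ uniformly by $t+1$ without separating the cases $p\alpha_p=1$ and $p\alpha_p>1$, whereas you handle both cases explicitly; the substance is identical.
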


We defer the proof of the lemma and its corollary to the end of the section.
 We apply the corollary to the partial sums of $F(\cdot + x)$, using the uniform upper bound (\ref{nonmax}), and   $\Mcal=\Z/q\Z$ with counting measure.
  This proves that for any $K \geq 2$,
\beq\label{max}
\| \max_{k \leq K} | \sum_{x \in (0,k]}F(\cdot + x) | \, \|_{\ell^{2r}(\Z/q\Z)} \ll_r (K q^{1/4r} + K^{1/2} q^{1/2r}) (\log K) .
\eeq
For our application in (\ref{step2step3}) we take $K=2H/L = 4q^{1/2r}$, so that the right-hand side is $\ll_r q^{3/4r}(\log q)   $.

\subsection{Deduction of  a weak Burgess bound}
We input the consequences   (\ref{2moment_0}) and   (\ref{max}) of properties (ii) and (iii) into our key relation (\ref{step2step3}). 
Upon recalling the definition of $\Lscr$ and $L$, this yields the following result, which  is larger than  the classical Burgess bound  by a factor of $q^{1/4r^2}$.  
\begin{thm}[Weak   bound]\label{thm_weak_Burgess}
Let $\chi$ be a non-principal multiplicative Dirichlet character modulo a prime $q$. Then for every integer $r \geq 1$,
\beq\label{bad_Burgess}
 | \sum_{x \in (N,N+H] } \chi(x) | 
 \ll_r H^{1-1/r} q^{(r+2)/4r^2}(\log q)^2 .
 \eeq
 \end{thm}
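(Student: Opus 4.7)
The proof is essentially assembly --- all the analytic work has been done in properties (i)--(iii) already set up in the text, and the statement follows by substituting the three resulting bounds into the master inequality (\ref{step2step3}), making the natural choice $K = 2H/L$ in the maximal estimate, and computing exponents.

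First I would collect the three ingredients. The Prime Number Theorem yields $|\Lscr| \gg L/\log L \gg L/\log q$ for $q$ sufficiently large relative to $r$, where $L = \tfrac{1}{2} H q^{-1/2r}$. The second moment bound (\ref{2moment_0}), proved via Lemma~\ref{lemma_M}, gives $\sum_m a(m)^2 \ll L^2$. The maximal operator estimate (\ref{max}), obtained from the Type II quasi-superorthogonality bound (\ref{approx_direct_chi}) for the sequence $\{\chi(\cdot + n)\}_n$ combined with the Menchov--Rademacher argument of Corollary~\ref{cor_norm_MR}, gives
\[
\Big\| \max_{k \leq K} \Big| \sum_{x \in (0,k]}\chi(\cdot + x) \Big| \, \Big\|_{\ell^{2r}(\Z/q\Z)} \ll_r \big(K q^{1/4r} + K^{1/2} q^{1/2r}\big) \log K.
\]

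Next I would substitute with $K = 2H/L = 4 q^{1/2r}$. At this value the two terms in the maximal bound are exactly balanced --- no accident, but a reflection of the optimization $|X| \approx |M|^{(1-\nu)/r} = q^{1/2r}$ underlying the approximate direct inequality (\ref{approx_direct}) --- so the maximal norm simplifies to $\ll_r q^{3/4r} \log q$. Plugging everything into (\ref{step2step3}) yields
\[
\Big| \sum_{x \in (N, N+H]} \chi(x) \Big| \ll_r \frac{\log L}{L} \cdot (L^2)^{1 - 1/(2r)} \cdot q^{3/4r} \log q \;\ll_r\; L^{1 - 1/r} q^{3/4r} (\log q)^2.
\]

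Finally I would expand $L \asymp H q^{-1/2r}$ and simplify the exponent of $q$:
\[
-\frac{1 - 1/r}{2r} + \frac{3}{4r} \;=\; \frac{1}{4r} + \frac{1}{2r^2} \;=\; \frac{r+2}{4r^2},
\]
producing the claimed bound $H^{1-1/r} q^{(r+2)/4r^2} (\log q)^2$. There is no real obstacle at this final stage --- the method's content lives entirely in properties (ii) and (iii), and the proof here is just arithmetic bookkeeping. The only items meriting a brief check are the reductions already noted in the text: $H \geq 2 q^{1/2r}$ (so that $L \geq 1$; otherwise the theorem is trivial), $H \leq q^{1/2 + 1/4r}$ (otherwise the P\'olya--Vinogradov bound is stronger), and the hypothesis $L^2 < q$ of Lemma~\ref{lemma_M}, which follows from the upper reduction on $H$.
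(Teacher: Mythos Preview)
Your proof is correct and follows exactly the paper's approach: substitute the second moment bound (\ref{2moment_0}) and the maximal estimate (\ref{max}) with $K=2H/L=4q^{1/2r}$ into the master inequality (\ref{step2step3}), then expand $L\asymp Hq^{-1/2r}$ and compute exponents. The paper states this deduction in a single sentence; you have helpfully written out the exponent arithmetic and the verification of the side conditions, but the argument is the same.
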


Supposing that $H = q^\be$, we see that for a fixed $r$, the exponent is $< \be$ (so that the bound is $o(H)$) when $\be > 1/4 + 1/2r$. Thus in particular, the bound only has a chance of being nontrivial if $\be> 1/4$, showing that we recovered the same core threshold as the classical Burgess bound. Now let us fix $\be = 1/4 + \kappa$ for some small $\kappa>0$
and compute the optimal choice of $r$. Up to a factor of $(\log q)^2$, the upper bound  in (\ref{bad_Burgess}) is $\ll Hq^{-\del}$ with $\del = (2\kappa r-1)/(2r^2)$. Computing the maximum of $\del$ as a function of $r$, it is advantageous to choose $r$ to be the nearest integer to $1/\kappa$, and as $\kappa \maps 0$ the savings is on the order of $\del \approx \kappa^2/2$.   
 See \S \ref{sec_compare_Burgess} for further comparison to the Burgess bound.

   We proved this weak bound in order to demonstrate the three core principles. 
 To recover the classical Burgess bound, we now introduce further averaging to (\ref{stopping_time}) and prove a second moment bound analogous to (\ref{2moment_0}) but for a  different function $a^\sharp(m)$.  

\subsection{Property (ii) revisited: additional averaging to prove the strong Burgess bound}

In our schematic argument, when passing from (\ref{rearrange}) to (\ref{TIq_pp'}) via H\"older's inequality, we lose less if $a(m)$ is nonzero for as many $m \in [1,q]$ as possible (but also not too large at any $m$). 
Within our precise argument, this motivates us to   further average (\ref{stopping_time}) over an even larger family of short-short intervals that are relatively well-distributed across $[1,q]$. A version of the further averaging we now describe  appeared in Burgess's original work.  

Any interval $(A, A+B]$ can be written as a difference $(A-A_0,A+B]\setminus (A-A_0,A],$ for every $A_0 \geq 0$. Moreover, as long as $A_0 \leq B$, then the longer interval $(A-A_0,A+B]$ is still of length at most $2B$, and hence comparable to the length of the original interval. There are $B$ ways to write $(A,A+B]$ as such a difference. We
apply this to $( \frac{N-aq}{\ell} , \frac{N-aq}{\ell}+ \frac{H}{\ell}]$ in order to average (\ref{stopping_time}) over $H/\ell \approx q^{1/2r}$ more short-short intervals. 

Precisely, we observe that the right-hand side of (\ref{stopping_time}) is equal to
\begin{multline*}
 |\Lscr|^{-1} \sum_{\ell \in \Lscr} \sum_{0 \leq a < \ell}  (H/\ell)^{-1}  \sum_{m \in (\frac{N-aq}{\ell} - \frac{H}{\ell}, \frac{N-aq}{\ell} ]} | \sum_{x \in (m, \frac{N-aq}{\ell} + \frac{H}{\ell}]}F( x) -  \sum_{x \in (m, \frac{N-aq}{\ell}]}F( x)| 
	 \\
	   \ll	  |\Lscr|^{-1} q^{-1/2r} \sum_{\ell \in \Lscr} \sum_{0 \leq a < \ell}     \sum_{m \in (\frac{N-aq}{\ell} - \frac{H}{L}, \frac{N-aq}{\ell} ]}
	  2 \max_{k \leq 2 H/L} | \sum_{x \in (m, m+k]}F( x) |.
	\end{multline*}
Now we define $a^\sharp(m)$ to count the redundancies of the starting points, 
 \beq\label{a_dfn'}
 a^\sharp(m) = \# \{ \ell \in \Lscr, 0 \leq a < \ell: m \in (\frac{N-aq}{\ell} - \frac{H}{L}, \frac{N-aq}{\ell} ] \}.
 \eeq
We conclude that
 \[| \sum_{x \in (N,N+H] } F(x) | \ll |\Lscr|^{-1} q^{-1/2r} \sum_m a^\sharp(m) \max_{k \leq 2H/L} | \sum_{x \in (m, m+k]}F( x) |. \]
We prepare to apply H\"older's inequality so we can exploit Type II quasi-superorthogonality in $\ell^{2r}(\Z/q\Z)$. First note that the sum is  finite since  $a^\sharp(m)$ is supported inside $[-2q,2q]$. 
Thus for   $p=2r$ with $1/p + 1/p'=1$, upon recalling $p' \leq 2$, since $a^\sharp(\cdot)$ takes its values in non-negative integers we can write
 \beq\label{step2step3'}
 | \sum_{x \in (N,N+H] } F(x) | \ll  |\Lscr|^{-1} q^{-1/2r} (\sum_m a^\sharp(m)^2)^{1-1/2r}\| \max_{k \leq 2H/L} | \sum_{x \in (0, k]}F( \cdot +m)| \; \|_{\ell^{2r}(\Z/q\Z)}. \eeq
Compared to (\ref{step2step3}), we have an extra savings $q^{-1/2r}$, but possibly larger   values $a^\sharp(m)$ compared to $a(m)$. 

We will again bound the second moment comparable to its average:
\[
 \sum_{m} a^\sharp(m)^2 \ll HL.
\]
 This is larger than  (\ref{2moment_0}) by a factor of $q^{1/2r}$, but since this is raised to the power $(1-1/2r)$ in (\ref{step2step3'}), we still gain a total of $q^{-1/4r^2}$ in extra savings in (\ref{step2step3'}).
Note that 
\[ \sum_m a^\sharp(m)^2 \ll (H/L) \sum_{\ell, \ell' \in \Lscr} M^\sharp(\ell,\ell')  , 
\]
where we now define
\[ M^\sharp(\ell,\ell') = \# \{ 0 \leq a < \ell, 0 \leq a' < \ell': |(N-aq)/\ell - (N-a'q)/\ell'| \leq H/L\}.\]
We can then apply  Lemma \ref{lemma_M} with $B=H/L$; note that $BL^2 = HL<q$ is satisfied since $H \leq q^{1/2+1/4r}$, and this verifies the second moment bound.
 With this and (\ref{max}) in hand, (\ref{step2step3'}) immediately proves
\[ | \sum_{x \in (N,N+H] } F(x) | \ll_r |\Lscr|^{-1} q^{-1/2r} (LH)^{1 - 1/2r} q^{3/4r} \log q \ll_r H^{1-1/r} q^{(r+1)/4r^2}(\log q)^2 ,\]
proving Theorem \ref{thm_Burgess}.
 
   \subsection{The Menchov-Rademacher inequality: proof of the lemmas}\label{sec_MR}
   \begin{proof}[Proof of Lemma \ref{lemma_MR}]
 If $n=2^t$ then $|b(n)  - b(0)|^{p}$ appears on the right-hand side as the summand with $i=v=0$, and thus we may fix our attention on the maximum over $1 \leq n< 2^t$. 
 Fix $1 \leq n< 2^t$ and write its binary expansion  as
 \[
 n = \sum_{i=0}^t \ep_i 2^{t-i}, \qquad  \ep_i  = \ep_i(n) \in \{0,1\}, \quad \ep_0 = \ep_0(n)=0.
 \]
 We write a telescoping sum for the difference of interest:
\[
  b(n) - b(0)  =\sum_{i=1}^t \{ b(\sum_{j \leq i} \ep_j 2^{t-j}) - b( \sum_{j<i} \ep_j 2^{t-j}) \}  =\sum_{i=1}^t \{ b(2^{t-i}\sum_{j \leq i} \ep_j 2^{i-j}) - b(2^{t-i} \sum_{j<i} \ep_j 2^{i-j}) \} .\]
   For each $1 \leq i \leq t$ it is then convenient to define
 \[ v_i =v_i(n) = \sum_{0 \leq j < i}\ep_j 2^{i-j}. \]
We also define $v_0=0$. Observe that $0 \leq v_i < 2^i$ for each $1 \leq i \leq t$, and (recalling $\ep_0=0$) we can write
\[
  b(n) - b(0)    =\sum_{i=0}^t \{ b( v_i 2^{t-i} + \ep_i2^{t-i}) - b( v_i 2^{t-i}  ) \}.\]
 Fix $1 \leq p < \infty$. Taking absolute values and applying H\"older's inequality,
  \[ |b(n) - b(0)|^{p} \leq (t+1)^{p-1} \sum_{i=0}^t | b( v_i 2^{t-i} + \ep_i2^{t-i}) - b( v_i 2^{t-i}  ) |^{p}.\]
We only possibly increase the right-hand side  if we  sum  over all possible values of $v_i < 2^i$; additionally, all nonzero terms on the right-hand side have  $\ep_i =1$, and we only possibly increase the right-hand side if we assume this always is the case. 
Thus
 \[ |b(n) - b(0)|^{p} \leq  (t+1)^{p-1} \sum_{i=0}^t \sum_{0 \leq v <2^i} | b( (v+1)  2^{t-i}) - b( v 2^{t-i}  )|^{p} .\]
Now we note that the right-hand side is independent of $1 \leq n < 2^t$, and the lemma is proved.
\end{proof}

 \begin{proof}[Proof of Corollary \ref{cor_norm_MR}]
Fix $1 \leq p < \infty$. Given $K \geq 2$, let $t \geq 1$ be such that $2^{t-1} \leq K < 2^t$. Then the left-hand side of the claimed inequality is dominated by 
$ \| \max_{0 \leq k \leq 2^t} |S_k (\cdot) |\|_{L^p(\Mcal)}$ while the putative right-hand side is comparable to $(2^{t})^{\al_p} (\log (2^t)).$ Thus it suffices to prove the inequality for the case $K=2^t$.

 We apply the lemma for each fixed $u$, with the choice $b(k) = S_k(u)$, followed by the uniform upper bound for the $L^p$ norm in the hypothesis. (Note that by construction $S_0(u)\con 0$ since it is an empty sum, so $|b(k)-b(0)| = |b(k)|$.) Thus we reason that:
 \begin{align*}
  \| \max_{0 \leq k \leq 2^t} |S_k (\cdot) |\|_{L^p(\Mcal)}^p
 	& = \int_{\Mcal} \max_{0 \leq k \leq 2^t} |S_k (u) |^p d\mu(u)\\
	& \leq \int_{\Mcal} (t+1)^{p-1} \sum_{i=0}^t \sum_{0 \leq v < 2^i} |S_{v2^{t-i},(v+1)2^{t-i}}(u)|^p d\mu(u)\\
	&= (t+1)^{p-1} \sum_{i=0}^t \sum_{0 \leq v < 2^i} \|S_{v2^{t-i},(v+1)2^{t-i}}(\cdot)\|_{L^p(\Mcal)}^p\\
	&\leq {c_p}^p (t+1)^{p-1} \sum_{i=0}^t \sum_{0 \leq v < 2^i} (2^{t-i})^{p\al_p}\\
	& \leq {c_p}^p  (t+1)^{p}    2^{tp\al_p}.
	 \end{align*}
 Here we used $\al_p p \geq 1$ so that the  factor $2^{-ip\al_p}$ at least dominates the $O(2^i)$ contribution of summing trivially over $v$.
Thus we have shown $  \| \max_{0 \leq k \leq 2^t} |S_k (\cdot) |\|_{L^p(\Mcal)} \leq c_p (t+1) 2^{t \al_p}.$ 
Since $(t+1) \ll 2\log (2^t)$  as long as $t \geq 1$,  this suffices for the case $K=2^t$ under consideration.

 \end{proof}

\subsection{Further remarks on the Burgess bound}\label{sec_remarks_Burgess}
  \subsubsection{Comparison of the weak bound to Burgess bound}\label{sec_compare_Burgess}

For a given $r$, the weak bound (\ref{bad_Burgess}) is nontrivial if $H> q^{1/4 + 1/2r}$ while the Burgess bound (\ref{good_Burgess}) is nontrivial if $H> q^{1/4 + 1/4r}$. Thus in the limit of arbitrarily large $r$, each has a threshold around $H>q^{1/4 + \ep}$ for $\ep>0$ arbitrarily small. But for any fixed $r$, and in particular for small $r$, the difference between (\ref{bad_Burgess}) and (\ref{good_Burgess}) is significant. Up to logarithmic factors, the weak bound is worse than P\'olya-Vinogradov if $r=1$, meets it if $r=2$, and improves on it for $r \geq 3$; the Burgess bound meets P\'olya-Vinogradov for $r=1$ and improves on it for $r \geq 2$.  This behavior for small $r$ also matters for composite $q$; the Burgess bound (\ref{good_Burgess}) is only known (via a more intricate proof) for $r \leq 3$ unless $q$ is cubefree, and one would expect similar restrictions for the weak bound. 

We specify the impact on subconvexity bounds for the Dirichlet $L$-function $L(1/2 + it, \chi)$ with $\chi$ of modulus $q$. Assume an upper bound of the form
\[S(x) = \sum_{1 \leq n \leq x} \chi (n) \ll q^\ep \min \{ q^{1/2}, x^\al q^\be\}\]
 for some $\al \leq 1, \be \leq 1/2$. An application of the approximate functional equation  \cite[Ch. 12]{IK} shows that 
   if $\al \leq 1/2$ then 
 \[ |L(1/2 +it, \chi)| \ll q^{\ep} \max\{ q^{(\al + \be -1/2)/(2\al)}, q^\be\},\]
and   if $\al > 1/2$ then
  \[  |L(1/2 +it, \chi)| \ll q^\ep \max\{ q^{(\al + \be -1/2)/(2\al)}, q^{\be + (1/2 + \be)(\al-1/2)/\al}\}.\]
The Burgess bound (\ref{good_Burgess})   provides $\al=1-1/r, \be = (r+1)/4r^2$ and the optimal choice occurs at $r=2$, thus proving Burgess's famous subconvexity bound $q^{1/4-1/16+ \ep}$. But the weaker bound (\ref{bad_Burgess}) provides $\al=1-1/r, \be = (r+2)/4r^2$ and the optimal choice occurs at $r=3$, leading to the much weaker bound $q^{1/4 - 1/48 + \ep}$.

\subsubsection{Influences and expositions}
 One can speculate how Burgess arrived at his  clever method. 
 Burgess's first paper cites  Davenport and Erd\H{o}s \cite{DavErd52} as a point of inspiration \cite[Lemma 2, p. 108]{Bur57}. Davenport and Erd\H{o}s  addressed Vinogradov's question on the least quadratic nonresidue modulo a prime $q$.
 In \cite[Lemma 1]{DavErd52} they consider (\ref{2r-moment}) in the case $r=1$, proving the identity 
 \[ \sum_{m \modd{q}} | \sum_{x \in (0,k]} \chi (m+x)|^2 = qk -k^2.\]
This does not require the Weil or Deligne bounds;  Davenport and Erd\H{o}s cite a 1906 thesis of Jacobsthal, and conjecture in a footnote it could have been known to Gauss. 
In \cite[Lemma 3]{DavErd52}   they consider the $2r$-th moment   for any $r \geq 1$ and prove what we call here an approximate direct inequality, referencing Weil's very recent work at that time (Burgess cites \cite[\S IV]{Wei45}). But they state that ``it does not seem to throw any light on the problem of the magnitude of the least quadratic non-residue;'' Burgess changed this.

 In this exposition, we introduce the new perspective that Burgess's argument is an application of superorthogonality, which incidentally we have seen was ``in the air'' in the 1920's and 1930's.
Additionally, we incorporated elements of two    treatments that streamline  Burgess's original method. Unpublished notes of H. Montgomery from the 1970's,   later   developed into  \cite{GalMon10}, introduced the use of the Menchov-Rademacher argument; this allows a more direct approach than Burgess described, and unifies the treatment when $N=0$ and $N \neq 0$, at the cost of a factor of $(\log q)^2$ instead of $(\log q)$ in the final Burgess bound. (In Burgess's work, certain disjointness properties of the short-short intervals were easier to prove when $N=0$.)  We also applied ideas of Heath-Brown   \cite{HB12}, which completely removed the need to show the short-short intervals are disjoint, by instead bounding the second moment (\ref{2moment_0}). There are other modern approaches of alternative flavors, such as  \cite[Thm. 12.6]{IK} in terms of multiplicative shifts, and a smoothed version in \cite[\S 17]{FKMS19}.

 Recent work has succeeded in applying Burgess-type arguments in other settings that involve multiplicative Dirichlet characters: see among other works \cite{DavLew63,Cha08,Cha09,BouCha10,HB12,HB16}. See also \cite[\S 17.2, \S 17.3]{FKMS19} for an exposition applying some of these ideas to so-called Type II and Type III sums, after introducing further averaging.
  Burgess arguments have also now been developed for ``mixed'' character sums, in which $F(x) = \chi(x) e^{2\pi i g(x)}$ where $\chi$ is a multiplicative Dirichlet character and $g$ is any real-valued polynomial; interestingly, these use the resolution of the Vinogradov Mean Value Theorem; see \cite{HBP15, Pie16, PieXu19, Pie20x}, and also the earlier \cite{Cha10}.
But the step (\ref{mult}), in  which we assumed that $F$ is totally multiplicative, prevents this argument from working more generally for  trace functions.  It would be of great interest to expand these ideas to apply to non-multiplicative trace functions.
 
  \subsection{Further types: short sums of random multiplicative functions}

  In this section we studied short sums of multiplicative trace functions. Short sums of other multiplicative functions are also of great interest; for example, the Riemann Hypothesis is equivalent to the claim that 
$ \sum_{n \leq x} \mu(n) = O(x^{1/2 + \ep})$ for all $x \geq 1$,  and all $\ep>0$, where $\mu(\cdot)$ is the M\"obius function. 

Wintner \cite{Win44} initiated a more general study of short sums of ``random multiplicative functions.''  One model is given by Rademacher  random multiplicative functions. These are built from the Rademacher distributions we have already seen, as follows. As $p$ varies over primes, $f_p$ is a sequence of independent random variables taking values $\pm 1$ with probability $1/2$. For square-free $n$, the random variable $f_n$ is defined by $f_n = \prod_{p|n} f_p$. 
Another model is a Steinhaus random multiplicative function:  as $p$ varies over primes, $f_p$ is a sequence of independent random variables uniformly distributed on the unit circle, with $f_n = \prod_{p^a || n} f_p^a$. 

 Let $\{f_n\}_n$ denote a sequence of such independent random multiplicative functions. Recent work    has computed (among other striking results) asymptotics for   
$ \| \sum_{n \leq N} f_n \|_{L^{k}} $, see \cite{HNR15,HeaLin15}. In the Steinhaus case, for $k=2r$ an even integer, the first step of the proof is an observation of superorthogonality, namely that a term $\int f_{n_1} \overline{f}_{n_2} \cdots f_{2r-1} \overline{f}_{2r}$ vanishes unless $n_1n_3 \cdots n_{2r-1} = n_2n_4 \cdots n_{2r}$.  We can think of this as a ``multiplicative diagonal'' constraint.
 In the Rademacher case, for any integer $k$ the first step  of the proof reveals yet another type of superorthogonality, namely  that a term 
 $\int f_{n_1} \cdots f_{n_k}$ vanishes unless $n_1 \cdots n_k$ is a perfect square  and each $n_1, \ldots, n_k$ is square-free.
 Each of these can be compared to Type I* superorthogonality.

\setcounter{section}{1}
 \renewcommand{\thesection}{\Alph{section}}
 
 \setcounter{subsection}{0}
 
  \section*{Appendix A: Further remarks on Walsh-Paley series }\label{sec_app_Paley}
  
    \setcounter{equation}{0}
\renewcommand{\theequation}{A.\arabic{equation}}

We deferred a few details on the direct and converse inequalities in the setting of Walsh-Paley series  in \S \ref{sec_Paley}.
Here, we first remark on the limiting argument to obtain (\ref{TII_dir_con_together}) for $p=2r$ from the truncated version (\ref{TII_direct}).
  Second, we remark on deducing the cases for $1<p<\infty$ from the cases with $p$ an even integer; this illustrates  a further application of   Khintchine's inequality.
Third, we  show how to deduce the operator bound (\ref{non_dyadic}) from the dyadic direct and converse inequalities (\ref{TII_dir_con_together}).

\subsection{Limiting arguments for direct and converse inequalities}
Fix $p=2r$. In the main text we showed that uniformly in $N$,
\[
\| \sum_{n=0}^N f_n  \|_{L^p} \leq c_p \| (\sum_{n=0}^N f_n^2)^{1/2} \|_{L^p} \leq c_p  \| (\sum_{n=0}^\infty f_n^2)^{1/2} \|_{L^p}.
\]
The same method of proof used to obtain this shows that for any $N_1< N_2$,
\[ \|  S_{2^{N_2}}f - S_{2^{N_1}} f \|_{L^p}\leq c_p \| ( \sum_{n=N_1+1}^{N_2} f_n^2)^{1/2} \|_{L^p}.\]
If $f$ is such that the right-hand side of the direct inequality converges, then this tail must vanish
  as $N_1, N_2 \maps \infty$, so that as $N \maps \infty$, $S_{2^N}f$ converges in $L^p$ norm to some function, say $F$, 
  which satisfies
$ \|F\|_{L^p} \leq   c_p\|( \sum_{n=0}^\infty f_n^2)^{1/2} \|_{L^p} .$
By the Dominated Convergence Theorem, for each $m$
\[c_m(F) = \int_0^1 F(\theta) w_m(\theta) d\theta = \int_0^1  f(\theta)w_m(\theta) d\theta  = c_m(f),\] 
and since  $\{w_m\}$ is a complete orthonormal system on $[0,1]$,   we conclude $F=f$, verifying the direct inequality.
For the converse inequality,  we apply the maximal bound (\ref{max_bound_WP}) to see that $\| ( \sum_{n=0}^N f_n^2)^{1/2} \|_{L^p} \leq c_p' \| \sum_{n=0}^N f_n \|_{L^p} \ll_p \|f\|_{L^p}$ uniformly in $N$, which suffices.
 
 \subsection{Linearization}
We have verified the direct and converse inequalities (\ref{TII_dir_con_together}) in $L^p$ for each even integer $p \geq 2$. To conclude the results for all $1 < p <\infty$, we recall Paley's arguments (now standard), in which the Rademacher functions again   make an appearance, via Khintchine's inequality.

One would like to interpolate either the direct inequality (or the converse inequality, respectively), but one must first linearize. For any fixed $1<p<\infty$, the truth for all $f \in L^p$ of the direct and converse inequalities 
 \beq\label{TII_direct_recap}
 \| (\sum_{n=0}^\infty f_n^2)^{1/2} \|_{L^p} \ll_p \|f\|_{L^p} \ll_p  \| (\sum_{n=0}^\infty f_n^2)^{1/2} \|_{L^p}
 \eeq
 is equivalent to the truth
 of the statement that 
 \beq\label{TII_linear}
 \|f^*\|_{L^p}\ll_p \|f\|_{L^p} \ll_p \|f^*\|_{L^p}
 \eeq
  holds for all $f \in L^p$, uniformly for all choices of $\ep_n \in \{ \pm 1\}$, where
 \[ f^*(t) = \sum_{n=0}^\infty \ep_n f_n(t).\]
 The advantage of  (\ref{TII_linear})  is that the expressions in this inequality are linear, and thus well-suited to interpolation.

Let us verify the equivalence. If (\ref{TII_linear}) holds, to deduce (\ref{TII_direct_recap}), we use the Rademacher functions. Given $f$ and its associated sequence $\{f_n\}$ we define an auxiliary function $F(t,\theta) = \sum_{n=0}^\infty r_n(\theta) f_n(t)$ for each $\theta \in [0,1]$. By assumption of (\ref{TII_linear}), for each fixed $\theta$, 
 \[  \int_0^1 | F(t,\theta)|^p dt \ll_p \int_0^1 |f(t)|^p dt \ll_p \int_0^1 | F(t,\theta)|^p dt.\]
We integrate this over $\theta \in [0,1]$ to conclude by Fubini's theorem that
\[ \int_0^1 \int_0^1 | \sum_{n=0}^\infty r_n(\theta) f_n(t)|^p d\theta dt \ll_p \int_0^1 |f(t)|^p dt \ll_p \int_0^1 \int_0^1 | \sum_{n=0}^\infty r_n(\theta) f_n(t)|^p d\theta dt.\]
Now for each fixed $t$ we apply Khintchine's inequality (\ref{TI_Khintchine}), and this proves that (\ref{TII_direct_recap}) holds, as desired.

The converse is more elementary. Given $f \in L^p$, and any choice of $\{\ep_n\}$, $f^*$ is the function with associated expansion $\sum_{n=0}^\infty g_n$ with $g_n=\ep_n f_n$, so that applying the direct inequality followed by the converse inequality assumed in (\ref{TII_direct_recap}) shows that
\[  \| f^* \|_{L^p} \ll_p \| (\sum_n g_n^2)^{1/2} \|_{L^p}  =\| (\sum_n f_n^2)^{1/2} \|_{L^p} \ll_p \|f\|_{L^p}.   \]
One obtains $\|f\|_{L^p} \ll_p \|f^*\|_{L^p}$ in an analogous fashion.

\subsection{Remarks for $2 \leq  p < \infty$} 
We know that (\ref{TII_direct_recap}) and hence (\ref{TII_linear}) holds for each $p=2r$ with $r \geq1$ an integer. 
 We fix a sequence $\{ \ep_n\}_n$ with $\ep_n \in \{\pm 1\}$ and consider a truncation $(S_{2^N}f)^* (t)  = \sum_{0 \leq n  \leq N} \ep_n f_n(t)$. 
 Then applying the left-hand side of (\ref{TII_linear}),  for every even integer $p \geq 2$, 
 \[ \|(S_{2^N}f)^* \|_{L^p} \ll_p \| S_{2^N}f\|_{L^p} \ll_p\|f\|_{L^p},\]
 in which the last inequality holds  uniformly in $N$, by the maximal theorem in (\ref{max_bound_WP}). 
By Riesz-Thorin interpolation between $p=2$ and any even integer, we conclude that this inequality holds for all $2 \leq p < \infty$. For a fixed $p \geq 2$, we can then deduce  that $(S_{2^N}f)^*$ converges in $L^p$ norm to a limit function, say $F^*$. By the Dominated Convergence Theorem, the  coefficients $c_m(F^*)$ agree with those of $f^*$, and since the Walsh functions form a complete system, we learn that $F^*=f^*$. We conclude that $\|f^*\|_{L^p} \ll_p \|f\|_{L^p}$, obtaining the left-hand  inequality of (\ref{TII_linear}) for each $2\leq p < \infty$.
For the other inequality, we simply observe that given $f$  and a fixed sequence $\{\ep_n\}$, then $(f^*)^*=f$, so the right-hand inequality of (\ref{TII_linear}) follows.
 
 \subsection{Remarks for $1< p  \leq 2$}\label{sec_linear}
One again uses the linearized inequalities (\ref{TII_linear}) in order to apply duality. 
Fix $1<p \leq 2$, and fix a sequence of $\ep_n \in \{\pm 1\}$, and accordingly define $f_N^* = \sum_{0 \leq n  \leq N} \ep_n f_n$. By duality, to show that $\| f_N^*\|_{L^p} \ll_p \|f\|_{L^p}$ it suffices to show that for all $g \in L^{p'}$ with  $1/p + 1/p'=1$, 
$\| f_N^* g\|_{L^1} \ll_p \|g\|_{L^{p'}} \|f\|_{L^p}.$
Precisely, 
\[ \| (\sum_{n=0}^{N} \ep_n f_n)  g \|_{L^1} = \| (\sum_{n=0}^{N} \ep_n g_n) f \|_{L^1} 
\leq \| \sum_{n=0}^{N} \ep_n g_n\|_{L^{p'}} \| f\|_{L^p},
\]
with the last inequality due to  H\"older's inequality.
We apply   the known case for  $p' \geq 2$, so that  $\| \sum_{n=0}^{N} \ep_n g_n\|_{L^{p'}}\ll_p \|g\|_{L^{p'}}$, uniformly in the choice of signs $\{ \ep_n\}$. We conclude that $\| f_N^*\|_{L^p} \ll_p \|f\|_{L^p}$ uniformly in $N$, and uniformly in the choice of $\{\ep_n\}$. Thus we may argue as before that $f_N^*$ converges in $L^p$ norm to a function, which we may check is indeed $f^* = \sum \ep_n f_n$, and this verifies that $\| f^*\|_{L^p} \ll_p \|f\|_{L^p}$ holds. For the other inequality, we again note that for each fixed choice of signs, $(f^*)^*=f$,  and thus we obtain $\| f\|_{L^p} \ll_p \|f^*\|_{L^p}$, concluding the proof.

  \subsection{Combining the direct and converse inequalities}\label{sec_combine} 
Fix $1<p<\infty$ and $n \geq 1$. To combine the direct and converse inequalities for the dyadic differences  $f_n = S_{2^{n}}f - S_{2^{n-1}}f$ in order to bound 
$S_n f$ on $L^p$, we must be able to express the partial sum $S_nf$ in terms of dyadic differences. Paley employs an identity of the following flavor.
Write the binary expansion $n=2^{n_1} + \cdots  + 2^{n_s}$ with $n_1> \cdots > n_s$. We claim
\beq\label{dyad}
w_n(t) w_n(\theta) \sum_{m=0}^{n-1} w_m(t)w_m(\theta) = \sum_{m \in [2^{n_1}, 2^{n_1+1})} w_m(t)w_m(\theta) + \cdots +\sum_{m \in [2^{n_s}, 2^{n_s+1})} w_m(t)w_m(\theta).
\eeq
Once we have verified this, the deduction is simple. 
Recall
\[ 
S_n f (t)  =  \sum_{m=0}^{n-1} c_m(f) w_m(t) =   \int_0^1 f(\theta) \sum_{m=0}^{n-1} w_m(\theta)  w_m(t) d\theta.
\]
To introduce the extraneous factor $w_n$ which is critical to the identity (\ref{dyad}), given any $f \in L^p[0,1]$ we define the function $g(\theta) = f(\theta) w_n(\theta)$ with identical $L^p$ norm; we will also use the notation $g_m  = S_{2^{m}} g - S_{2^{m-1}} g.$ Then using $w_n(\theta)^2 \con 1$ followed by (\ref{dyad}), 
\[ 
w_n(t) (S_n f)(t) =   \int_0^1 g(\theta) w_n(\theta) w_n(t) \sum_{m=0}^{n-1} w_m(\theta)  w_m(t) d\theta =  g_{n_1+1}(t) + \cdots + g_{n_s+1}(t).
\]
Now applying first the direct inequality and then the converse inequality for the functions $\{g_n\}$ we obtain the desired result:
\[ \| S_nf \|_{L^p}  = \| \sum_{j=1}^s g_{n_j+1} \|_{L^p} \ll_p \| ( \sum_{j=1}^s g_{n_j+1}^2)^{1/2} \|_{L^p}
	\leq   \| ( \sum_{n=0}^\infty g_n^2)^{1/2} \|_{L^p} 
	\ll_p  \| g \|_{L^p} 
	=   \|f \|_{L^p}.\]

To verify (\ref{dyad}), it suffices to observe an equivalent identity about  sets of numbers written  in binary (also expressible in terms of  properties of the Walsh group or ``dyadic group,'' see \cite[\S 2]{Fin49} or \cite{Bil67}).
Precisely, fix $n$ and $m \leq n$  and suppose $n=2^{n_1} + \cdots + 2^{n_s}$ (with $n_1> \cdots > n_s$)  and  $m=2^{m_1} + \cdots + 2^{m_r}$ (with $m_1 > \cdots > m_r$), and let the $(n_1+1)$-digit representation of $n$ and $m$ in binary be $\underline{n}, \underline{m}$, respectively.  Then $w_n w_m = w_u$ where $\underline{u} = \underline{n} \oplus \underline{m}$; here $\oplus$ denotes exclusive-or summation. (Since the square of any Rademacher function is identically one,   if any exponent occurs in both the binary expansion of $n$ and of $m$, then it does not appear as an  exponent in the binary expansion of $u$ for the function $w_u$ such that $w_u = w_nw_m$.) 

Consequently, (\ref{dyad}) is equivalent to the following identity on sets of distinct binary numbers:  
\[ \{ \underline{n} \oplus \underline{m} : 0 \leq m < n \} = \bigsqcup_{j=1}^s \{ \underline{m} : 2^{n_j} \leq m < 2^{n_j+1}\}. \]
We can first verify that for $j=1$,
$ \{ \underline{n} \oplus \underline{m} : 0 \leq m < 2^{n_1} \}  = \{ \underline{m} : 2^{n_1} \leq m < 2^{n_1+1}\}.$
This is because the map acting on $ 0 \leq m < 2^{n_1}$    by $m \mapsto  \underline{n} \oplus \underline{m} $ is injective and maps into $\{ \underline{m} : 2^{n_1} \leq m < 2^{n_1+1}\}$; since the cardinalities match, it is a bijection. 
Similarly, one can see that for each $2 \leq j \leq s$,
\[ \{ \underline{n} \oplus \underline{m} : 2^{n_1} + \cdots + 2^{n_{j-1}} \leq m < 2^{n_1} + \cdots + 2^{n_{j-1}} + 2^{n_j} \}  = \{ \underline{m} : 2^{n_j} \leq m < 2^{n_j+1}\},\]
and the claim holds.

In Remark \ref{remark_WPR}, we claimed that while the functions $\{w_n\}$ are orthogonal, they do not themselves possess superorthogonality properties for $2r$-tuples with $r\geq 2$. This referred to the fact that for any $r \geq 2$,  we can pick $2r$ functions $w_n$ with $2r$ distinct values of $n$ (so  the tuple $(n_1,\ldots, n_{2r})$ satisfies the hypothesis of Type I or Type II or Type III)   such that $\int w_{n_1} \cdots w_{n_{2r}} = 1$. Using the notation introduced above, this follows from the fact that we can choose $2r$ pairwise distinct integers $n_1, \ldots, n_{2r}$ such that when written in binary, $\underline{n}_1 \oplus \cdots \oplus \underline{n}_{2r}=0$.
  
  \section*{Appendix B: The source of quasi-superorthogonality for trace functions}
  \begin{center}
\emph{ Appendix by Emmanuel Kowalski}\footnote{ETH Z\"urich, R\"amistrasse 101, 8092 Z\"urich, Switzerland. Email: \tt{kowalski@math.ethz.ch}}
  \end{center}
  
 \setcounter{subsection}{0}
    
  \setcounter{equation}{0}
\renewcommand{\theequation}{B.\arabic{equation}}
This short note will attempt to explain the source of the
quasi-superorthogonality of trace functions that appears in Section \ref{sec_trace},
and in particular it will highlight that it arises from ``exact''
superorthogonality (of the corresponding type) for other functions,
combined with Deligne's very deep work on the Riemann Hypothesis over
finite fields. We then explain briefly the source of the exact
superorthogonality in the type of examples considered in the
survey~\cite{FKM15} of Fouvry, Kowalski and Michel.

\emph{Remark.}
  The presentation is not fully rigorous, since we did not want to
  obscure the key conceptual point with technical aspects, such as the
  need to work with continuous $\ell$-adic representations, etc.

Let $q$ be a prime number. The key data is a certain compact
topological group $\Pi_q$ associated to~$q$, with a normal
subgroup~$\Pi_q^g$ (both are algebraic variants of the classical
fundamental group of topology, but mainly viewed as classifying
coverings of the space, instead of groups of homotopy classes of
loops). Moreover, for every $x\in\Z/q\Z$, there exists a conjugacy
class $\theta_{q}(x)$ in $\Pi_q$ (called the \emph{Frobenius}
conjugacy class at~$x$), and $\Pi_q^g$ is big enough that it and a
single Frobenius conjugacy class generate~$\Pi_q$ topologically.

A trace function $F$ modulo~$q$ always has the
following form: there exists a finite-dimensional vector space $V$ on
which~$\Pi_q$ acts linearly (i.e., a finite-dimensional representation
of the group) in such a way that
\begin{equation}\label{eq-1}
  F(x)=\Tr(\theta_{q}(x)\mid V),
\end{equation}
the trace of the endomorphism of~$V$ associated to the Frobenius
conjugacy class at~$x$. This is well-defined, since the trace is
invariant under conjugation.

We view the action as a homomorphism $\rho\colon\Pi_q\to \GL(V)$. Then
the formula~(\ref{eq-1}) shows that \emph{a trace function is the
  restriction of the character of a representation to a certain subset
  of conjugacy classes of that group}.\footnote{\ To be more precise, this
  applies exactly in this way only when all $x\in\Z/q\Z$ are
  ``unramified'' for~$\rho$; since exceptions to this are rare for the
  cases that interest us, and since there is in any case a similar
  (but slightly more complicated) description even when~$x$ is
  ramified, we do not dwell on this issue.}

The Grothendieck--Lefschetz trace formula combined with Deligne's
Riemann Hypothesis can then be shown to imply (for suitable trace
functions) the statement that
\begin{equation}\label{eq-deligne}
  \sum_{x\in \Z/q\Z}
  F(x)=\Bigl(\int_{\Pi_q^g}\Tr(\rho(y))dy\Bigr)cq+O(\sqrt{q}),
\end{equation}
for some complex number~$c$ with $|c|\leq 1$, where the integral is
with respect to the probability Haar measure on the compact
group~$\Pi_q^g$ and the implied constant in the $O(\cdot)$ symbol
depends only on ``local'' invariants of~$\rho$ which are usually easy
to bound.

\emph{Remark.}
  In many cases of interest, one deals with an action of~$\Pi_q$ which
  has the property that~$\rho(\Pi_q^g)=\rho(\Pi_q)$.
  Then~(\ref{eq-deligne}) holds with~$c=1$, and thus it indicates that
  the discrete sum of the trace of~$\rho$ over the finitely many
  Frobenius classes $\theta_q(x)$ is close to the integral over the
  whole group (note that~$\rho(\theta_q(x))\in\Pi^g_q$ because of the
  assumption on~$\rho$). However, the formula~(\ref{eq-deligne}) holds
  in general in the stated form.

We can now explain how this, together with algebraic properties of
certain compact Lie groups, leads to quasi-superorthogonality.

Suppose we have finitely many trace functions $F_1$, \ldots, $F_{2r}$,
each associated to a representation $\rho_i$ (on the space~$V_i$),
satisfying suitable conditions.
We want to understand the sum
$$
\sum_{x\in \Z/q\Z} F_1(x)\overline{F_2(x)}\cdots
F_{2r-1}(x)\overline{F_{2r}(x)}.
$$

Part of the unspecified properties required of~$\rho_i$ imply that the
contragradient or dual representation $D(\rho_i)$ of~$\rho_i$
satisfies
$$
\Tr(D(\rho_i)(y))=\overline{\Tr(\rho_i(y))}.
$$
So, according to~(\ref{eq-deligne}), applied to the representation
$$
\rho=\rho_1\otimes D(\rho_2)\otimes \cdots\otimes \rho_{2r-1}\otimes
D(\rho_{2r}),
$$
we get
\begin{multline*}
\sum_{x\in \Z/q\Z} F_1(x)\overline{F_2(x)}\cdots
F_{2r-1}(x)\overline{F_{2r}(x)}
\\=
\Bigl(\int_{\Pi_q^g}\Tr(\rho_1(y))
\overline{\Tr(\rho_2(y))}\cdots
\Tr(\rho_{2r-1}(y))
\overline{\Tr(\rho_{2r}(y))}
dy\Bigr)c'q+O(\sqrt{q}),
\end{multline*}
for some complex number~$c'$ with~$|c'|\leq 1$.

Thus, we will obtain quasi-superorthogonality, of any type, for the
trace functions, provided the characters $\Tr(\rho_i)$ of the $\rho_i$
(restricted to the subgroup~$\Pi_q^g$) satisfy \emph{exact}
superorthogonality of the same type.

We present now one source of such superorthogonality that lies behind
many examples (but not all --- for Dirichlet characters, such as in the
inequality (\ref{Dir_sum}), the mechanism is a bit different).

In fact, at this point, we can replace~$\Pi^g_q$ by any fixed compact
group~$G$, with the~$\rho_i$ being unitary (continuous)
finite-dimensional representations of~$G$.
\par
According to the character theory of compact groups the integral
\begin{equation}\label{eq-integral}
  \int_{G}\Tr(\rho_1(y)) \overline{\Tr(\rho_2(y))}\cdots
  \Tr(\rho_{2r-1}(y)) \overline{\Tr(\rho_{2r}(y))} dy
\end{equation}
is equal to the dimension of the space of invariant vectors in the
tensor product representation~$\rho$. Now suppose that each $V_i$ has
dimension at least~$2$ and that the image of each $\rho_i$, which is a
subgroup of the unitary group of the space $V_i$, happens to be the
special unitary group $\SU(V_i)$.  Consider the map
$$
y\mapsto (\rho_1(y),\ldots,\rho_{2r}(y))
$$
from~$G$ to
$$
\SU(V_1)\times \cdots\times \SU(V_{2r}).
$$
Let~$H$ be its image. It is again a compact group, and it has the
property that the projection of~$H$ to each factor~$\SU(V_i)$ is
surjective. Now a special case of what Katz~\cite[\S 1.8,
Prop. 1.8.2]{Kat90} has called the Goursat--Kolchin--Ribet
property is that such a subgroup~$H$ is \emph{equal} to the product
$$
\SU(V_1)\times \cdots\times \SU(V_{2r}),
$$
unless at least two of the representations are equivalent, in which
case at least two of the characters $\Tr(\rho_i)$ are the same
functions. (To see that this may be the case, consider the special
case where all~$V_i$ have different dimensions; then the groups
$\SU(V_i)$ are pairwise non-isomorphic ``almost'' simple groups, and
the projection assumption implies that the group~$H$ has to contain
all of them as ``Jordan--H\"older factors'', which is only possible
if~$H$ is the full product.)

Thus, if no two of the characters are equal, then we have a splitting
of the integral
\begin{multline*}
  \int_{G}\Tr(\rho_1(y)) \overline{\Tr(\rho_2(y))}\cdots
  \Tr(\rho_{2r-1}(y)) \overline{\Tr(\rho_{2r}(y))} dy = \int_H
  \Tr(y_1,y_2^*,\ldots,y_{2r-1},y_{2r}^*)dy_1\cdots dy_{2r}
  \\
  = \Bigl(\int_{\SU(V_1)}\Tr(y_1)dy_1\Bigr)\cdots
  \Bigl(\int_{\SU(V_{2r})}\overline{\Tr(y_{2r})}dy_{2r}\Bigr),
\end{multline*}
which vanishes. In other words, in these conditions, we obtain
superorthogonality of Type II, and in fact really in the same way
suggested at the beginning of the paper, i.e., from independent random
variables, these being the different characters
$y\mapsto\Tr(\rho_i(y))$.

One can be more precise about conditions on the
representations~$\rho_i$ that lead to vanishing of the
integral~(\ref{eq-integral}), but we hope that this sketch has given
some idea of how this may arise.

\section*{Acknowledgements}
Pierce is partially supported by NSF   CAREER grant DMS-1652173, a Sloan Research Fellowship, and the AMS Joan and Joseph Birman Fellowship.

\bibliographystyle{alpha}
\bibliography{NoThBibliography}
\label{endofproposal}

\end{document}